\mathchardef\mhyphen="2D
\newcommand{\grk}{\operatorname{grk}}
\newcommand{\Kar}{\operatorname{Kar}}
\newcommand{\cC}{\mathscr{C}}
\newcommand{\cT}{\mathcal{T}}
\newcommand{\cF}{\mathcal{F}}
\newcommand{\cG}{\mathcal{G}}
\newcommand{\cH}{\mathcal{H}}
\newcommand{\cP}{\mathcal{P}}
\newcommand{\cL}{\mathcal{L}}
\newcommand{\ku}{\Bbbk}
\newcommand\fP{\mathsf{P}}
\newcommand{\Z}{{\mathbb Z}}
\newcommand{\V}{{\mathbb V}}
\newcommand{\End}{\operatorname{End}}
\newcommand{\Exp}{\operatorname{Exp}}
\newcommand\Hom{\operatorname{Hom}}
\newcommand\id{\operatorname{id}}
\numberwithin{equation}{section}
\newtheorem{lema}{Lemma}[section]
\newtheorem{theorem}[lema]{Theorem}
\newtheorem{cor}[lema]{Corollary}
\newtheorem{prop}[lema]{Proposition}
\newtheorem{question-app}{Question}
\theoremstyle{definition}
\theoremstyle{remark}
\newtheorem{example}[lema]{Example}
\newtheorem{rmk}[lema]{Remark}
\newcommand{\uv}{{\underline{v}}}
\newcommand{\uw}{{\underline{w}}}
\newcommand{\ux}{{\underline{x}}}
\newcommand{\uy}{{\underline{y}}}
\newcommand{\uH}{\underline{H}}
\newcommand{\uPhi}{\underline{\Phi}}
\newcommand{\HHom}{\mathbb{H}\hspace{-1.5pt}\operatorname{om}}
\newcommand{\EEnd}{\mathbb{E}\hspace{-1.5pt}\operatorname{nd}}
\newcommand{\ustar}{\,\underline{\star}\,}
\newcommand{\wstar}{\,\widehat{\star}\,}
\newcommand{\oDelta}{\overline{\Delta}}
\newcommand{\onabla}{\overline{\nabla}}
\newcommand{\uDelta}{\underline{\Delta}}
\newcommand{\unabla}{\underline{\nabla}}
\newcommand{\wnabla}{\widetilde{\nabla}}
\newcommand{\uB}{\underline{B}}
\newcommand{\dDelta}{\hspace{1pt} {\mbox{\scriptsize$\Delta$}}\kern-8pt\Delta}
\newcommand{\dnabla}{\rotatebox[origin=c]{180}{${\mbox{\scriptsize$\Delta$}}\kern-8pt\Delta$}}
\newcommand{\fh}{\mathfrak{h}}
\newcommand{\rT}{\mathscr{T}}
\newcommand{\rL}{\mathscr{L}}
\newcommand{\uT}{\underline{\cT}}
\newcommand{\oT}{\overline{\cT}}
\newcommand{\wT}{\widetilde{\cT}}
\newcommand{\wP}{\widetilde{\cP}}
\newcommand{\wb}{\widehat{b}}
\newcommand{\ws}{\widehat{s}}
\newcommand{\wt}{\widehat{t}}
\newcommand{\wepsilon}{\widehat{\epsilon}}
\newcommand{\weta}{\widehat{\eta}}
\newcommand\BE{\mathsf{BE}}
\newcommand\RE{\mathsf{RE}}
\newcommand\LE{\mathsf{LE}}
\newcommand\LM{\mathsf{LM}}
\newcommand\FM{\mathsf{FM}}
\newcommand\Loc{\mathsf{Loc}}
\newcommand\loc{\mathrm{loc}}
\newcommand\Tilt{\mathsf{Tilt}}
\newcommand{\For}{\mathsf{For}}
\newcommand{\bk}{\Bbbk}
\newcommand{\Mod}{\mathrm{Mod}}
\newcommand{\Kb}{K^{\mathrm{b}}}
\newcommand{\Db}{D^{\mathrm{b}}}
\newcommand{\DiagBS}{\mathscr{D}_{\mathrm{BS}}}
\newcommand{\oDiagBS}{\overline{\mathscr{D}}_{\mathrm{BS}}}
\newcommand{\uDiagBS}{\underline{\mathscr{D}}_{\mathrm{BS}}}
\newcommand{\Diag}{\mathscr{D}}
\newcommand{\oDiag}{\overline{\mathscr{D}}}
\newcommand{\uDiag}{\underline{\mathscr{D}}}
\newcommand{\oB}{\overline{B}}
\newcommand{\oD}{\overline{\Delta}}
\newcommand{\Abe}{\mathscr{A}}
\newcommand{\AbeBS}{\mathscr{A}_{\mathrm{BS}}}
\newcommand{\TBS}{\mathscr{T}_{\mathrm{BS}}}
\newcommand{\simto}{\xrightarrow{\sim}}
\def\Tenint{\@ifnextchar_{\@Tenintsub}{\@Tenintnosub}}
\def\@Tenintsub_#1{\mathchoice{\mathbin{\underline{\mathop{\otimes}}}_{#1}}%
  {\underline{\otimes}_{#1}}{\underline{\otimes}_{#1}}{\underline{\otimes}^L_{#1}}}
\def\@Tenintnosub{\mathbin{\underline{\mathop{\otimes}}}}
\def\lotimes{\@ifnextchar_{\@lotimessub}{\@lotimesnosub}}
\def\@lotimessub_#1{\mathchoice{\mathbin{\mathop{\otimes}^L}_{#1}}%
  {\otimes^L_{#1}}{\otimes^L_{#1}}{\otimes^L_{#1}}}
\def\@lotimesnosub{\mathbin{\mathop{\otimes}^L}}
\title{Koszul duality for Coxeter groups}
\author{Simon Riche}
\address{Universit\'e Clermont Auvergne, CNRS, LMBP, F-63000 Clermont-Ferrand, Fran\-ce.
}
\email{simon.riche@uca.fr}
\author{Cristian Vay}
\address{Universidad Nacional de C\'ordoba, Facultad de Matem\'atica, Astronom\'ia, F\'isica y Computaci\'on, CIEM -- CONICET, C\'ordoba, Argentina.}
\email{cristian.vay@unc.edu.ar}
\thanks{This project has received funding from the European Research Council (ERC) under the European Union's Horizon 2020 research and innovation programme (S.R., grant agreement No.~101002592). C. V. is partially supported by  CONICET PIP 11220200102916CO, Foncyt PICT 2019-3660 and Secyt (UNC)}
\begin{document}

\begin{abstract}
We construct a ``Koszul duality'' equivalence relating the (diagrammatic) Hecke category attached to a Coxeter system and a given realization to the Hecke category attached to the same Coxeter system and the dual realization. This extends a construction of Be{\u\i}linson--Ginzburg--Soergel~\cite{bgs} and Bezrukavni\-kov--Yun~\cite{by} in a geometric context, and of the first author with Achar, Makisumi and Williamson~\cite{amrw2}. As an application, we show that the combinatorics of the ``tilting perverse sheaves'' considered in~\cite{arv} is encoded in the combinatorics of the canonical basis of the Hecke algebra of $(W,S)$ attached to the dual realization.
\end{abstract}

\maketitle

\section{Introduction}

\subsection{Koszul duality for general Coxeter groups}

The utility of Koszul duality in Representation Theory has been first emphasized by Be{\u\i}linson--Ginzburg--Soergel~\cite{bgs} in the setting of the Kazhdan--Lusztig conjecture on characters of simple highest weight modules for a complex semisimple Lie algebra. In this paper the authors explained in particular the relation between this construction and some ``mixed'' properties of $\ell$-adic perverse sheaves on flag varieties of reductive algebraic groups. A modified form of this Koszul duality for constructible sheaves on flag varieties was later generalized to Kac--Moody groups by Bezrukavnikov--Yun~\cite{by}, which allowed to explain the relations between several equivalences of categories constructed by Bezrukavnikov with various collaborators, and related to local geometric Langlands duality and representations of quantum groups at roots of unity.

More recently, as part of a program aiming at generalizing some of Bezrukavni\-kov's equivalences in the setting of reductive algebraic groups over fields of positive characteristic, a version of the Koszul duality of~\cite{by} was obtained by the first author with Achar, Makisumi and Williamson in~\cite{amrw2}. This construction is more formal and less geometric than that of~\cite{by}; in particular it involves the ``mixed derived categories'' constructed with Achar (see~\cite{ar}), which are useful but rather ad-hoc. A more natural setting for this construction seems to be that of the Hecke category attached to a Coxeter system and a given realization by Elias--Williamson~\cite{ew} and, although the final construction of Koszul duality was restricted to the case of Cartan realizations of crystallographic Coxeter systems, i.e.~the case when the Hecke category can be described geometrically in terms of parity complexes on flag varieties of Kac--Moody groups, part of the constructions involved were already treated in full generality in~\cite{amrw1}.

This raised the question of the existence of a version of Koszul duality in the general setting of~\cite{ew}, involving in particular a general Coxeter system.\footnote{A first suggestion of the existence of such a construction can be found in~\cite[Remark~3.5]{ew}.} The main result of this paper is a realization of this idea based on some prior work with Achar~\cite{arv}, under some technical conditions that we discuss in~\S\ref{ss:intro-assumptions} below.

\subsection{Statement}

Let us consider a Coxeter system $(W,S)$ and a realization $\fh$ of $(W,S)$ over a field $\bk$ satisfying appropriate assumptions (see~\S\ref{ss:intro-assumptions}). To these data Elias--Williamson attach a $\bk$-linear monoidal category $\DiagBS(\fh,W)$ endowed with a ``shift'' autoequivalence $(1)$, defined by generators and relations, and whose split Grothen\-dieck group identifies with the Hecke algebra of $(W,S)$. For any objects $X$ and $Y$ in $\DiagBS(\fh,W)$, the $\bk$-vector space
\[
 \bigoplus_{n \in \Z} \Hom_{\DiagBS(\fh,W)}(X,Y(n))
\]
admits a canonical structure of graded bimodule over the symmetric algebra $R$ of $V^*$, where $V$ is the representation underlying $\fh$, and by ``killing'' the left, resp.~right, action of this algebra one obtains a category $\oDiagBS(\fh,W)$, resp.~$\uDiagBS(\fh,W)$. We then introduce the ``biequivariant'', ``right equivariant'' and ``left equivariant'' categories attached to $(\fh,W)$ as
\begin{align*}
 \BE(\fh,W) &= \Kb \DiagBS^\oplus(\fh,W), \\
 \RE(\fh,W) &= \Kb \oDiagBS^\oplus(\fh,W), \\ 
 \LE(\fh,W) &= \Kb \uDiagBS^\oplus(\fh,W)
\end{align*}
where the subscript ``$\oplus$'' indicates the additive hull.
(The terminology, taken from~\cite{amrw1}, is motivated by the special case when the Hecke category can be described in terms of constructible sheaves: the biequivariant category involves sheaves on the group which are equivariant for a Borel subgroup on both sides, while the right, resp.~left, equivariant category involves sheaves which are equivariant for the action on the right, resp.~left.)

With this notation, in the special case considered there, one form of the Koszul duality of~\cite{amrw2} is an equivalence of triangulated categories
\[
\kappa:\RE(\fh,W)\simto\LE(\fh^*,W) 
\]
which satisfies $\kappa \circ (1) = (-1)[1] \circ \kappa$, where $\fh^*$ is the dual realization (obtained by switching roots and coroots; in the case related to geometry this amounts to Langlands duality). To state further properties of this equivalence one needs to recall that the categories $\RE(\fh,W)$ and $\LE(\fh^*,W)$ admit canonical ``perverse'' t-structures (again, the terminology comes from geometry) whose hearts admit canonical highest weight structures. More precisely, at the time when~\cite{amrw1,amrw2} were written this was known only in the case of Cartan realizations of crystallographic Coxeter systems, but in the meantime this construction was extended to the general setting in~\cite{arv}. As in any highest weight category one can consider the indecomposable tilting objects in these categories, and the main property of Koszul duality can be roughly stated as the fact that it exchanges the indecomposable objects in the karoubian envelope of $\oDiagBS^\oplus(\fh,W)$, resp.~$\uDiagBS^\oplus(\fh^*,W)$, with the indecomposable tilting objects in the heart of the perverse t-structure on $\LE(\fh^*,W)$, resp.~$\RE(\fh,W)$. One of the main results of the present paper is a version of this statement for general realizations of general Coxeter groups, see Theorem~\ref{thm:self-duality}.

\subsection{Strategy}

The main strategy of our proof is similar to that used in~\cite{amrw2} (which, itself, is an adaptation of constructions considered in~\cite{soergel-perv,bbm,bgs,by}). Namely, instead of constructing the equivalence $\kappa$ directly, we first consider a ``free monodromic'' variant, from which (the inverse of) $\kappa$ will be obtained by essentially ``killing'' a left $R$-action on morphisms. The main point is that in this setting one works with \emph{monoidal} categories, where the definition of $\DiagBS(\fh,W)$ by generators and relations can be used with great effect. 

We therefore construct a category of ``free monodromic tilting objects'' $\TBS(\fh,W)$ attached to $(\fh,W)$, and then an equivalence of monoidal categories
\[
 \DiagBS(\fh^*,W) \simto \TBS(\fh,W),
\]
see Theorem~\ref{thm:Phi}. The main difficulty lies in the \emph{definition} of such a functor; once this is known the same arguments as in~\cite{amrw2} can be developed to prove that it is an equivalence. To define this functor, in view of the definition of $\DiagBS(\fh^*,W)$ we need to describe the images of the generating morphisms, and then check that these images satisfy the relations imposed in $\DiagBS(\fh^*,W)$. The construction of all the morphisms involving only one color can be copied from~\cite{amrw2}, but the definition of the morphism corresponding the $2$-colors generators given there relies partially on geometry. Here we give a different (and general) construction of this morphism in Section~\ref{sec:construction}. 

This construction relies on the prior construction of a ``functor $\V$'' in case $W$ is finite, explained in Section~\ref{sec:functor V}. This construction is similar to that in~\cite{amrw2}, with one notable exception: in~\cite{amrw2} this functor takes values in ``usual'' Soergel bimodules, which leads to imposing technical assumptions on the characteristic of $\bk$; these assumptions can be removed later in the paper using some change-of-scalars arguments which make sense only for Cartan realizations. Here we use a variant of the construction of Soergel bimodules developed in the meantime by Abe~\cite{abe1}, which allows to avoid these technical assumptions completely.

The proof that these morphisms satisfy the required relations is again similar to the corresponding part of~\cite{amrw2}, but using Abe's category of bimodules rather than plain bimodules.

\subsection{Assumptions}
\label{ss:intro-assumptions}

The assumptions that we have to impose on our relization $\fh$ are explained in detail in~\S\ref{ss:convention}, \S\ref{ss:dual-real} and~\S\ref{ss:bifunctoriality}. The assumptions of~\S\S\ref{ss:convention}--\ref{ss:dual-real} are ``standard'' assumptions that are required for the theories in~\cite{ew} and~\cite{abe1} to behave appropriately. They are known to hold in the main examples of realizations that arise ``in nature,'' i.e.~the Cartan realizations of crystallographic Coxeter systems and the geometric realization (and its variants considered by Soergel) of any Coxeter system not involving type $\mathsf{H}_3$.

The assumption of~\S\ref{ss:bifunctoriality} is of a different kind: in~\cite{amrw1} an ad-hoc version of the ``free monodromic completed category'' of~\cite{by} was constructed in the diagrammatic setting. This category (or a subcategory) should be monoidal, and all the structures involved can indeed be constructed, but the question of whether these structures satisfy the appropriate ``interchange law'' was left open. Here we assume that this property is satisfied for appropriate objects. A proof that it is indeed the case in full generality has been announced by Hogancamp and Makisumi, but no written account of their work is available as of now. In~\cite{amrw1} it was proved that this property holds for Cartan realizations of crystallographic Coxeter systems, and here we remark that the same approach, combined with the results of~\cite{arv}, also applies under some technical assumptions that are satisfied for geometric and Soergel realizations. 

In particular, the presently available literature is enough to show that all of our statements hold at least for the 2 main families of examples of realizations of Coxeter systems that are known.

\subsection{Application}

As an application, we show in~\S\ref{ss:application} that the combinatorics of the indecomposable tilting objects in $\RE(\fh,W)$ (constructed in~\cite{arv}) is governed by the combinatorics of the ``canonical basis'' attached to the dual realization. In particular, in the case of Soergel realizations, this combinatorics is governed by Kazhdan--Lusztig polynomials (see Example~\ref{ex:t-poly-Soergel}), and the heart of the perverse t-structure on $\RE(\fh,W)$ is equivalent to the category of finite-dimensional graded modules over a finite-dimensional Koszul ring (see Remark~\ref{rmk:koszul-duality}).

\subsection{Acknowledgements}

The results in this paper are the realization of a project initiated with Pramod Achar, Shotaro Makisumi and Geordie Williamson. We thank them for very helpful discussions at various stages of its completion, and for their contributions in the formulation of several key ideas. This work also owes much to the work of Noriyuki Abe~\cite{abe1,abe2}, which provided the necessary ingredients to generalize the approach used in~\cite{amrw2}. We thank an anonymous referee for helpful comments.

A large part of our work was accomplished during a visit of the second author in Clermont-Ferrand funded by CIMPA-ICTP fellowships program ``Research in Pairs'' and ERC.

\section{Preliminaries}
\label{sec:prelim}

\subsection{Conventions and assumptions}
\label{ss:convention}

Throughout the work $(W,S)$ denotes a Coxeter system with $S$ finite. (As usual, we will usually only indicate $W$ in our notations, although all the structures we consider also depend on the choice of Coxeter generators $S$.) We consider on $W$ the Bruhat order $\leq$ and the length function $\ell$. We will use the standard terminology regarding Coxeter systems, as recalled e.g.~in~\cite[\S3.1]{arv}. In particular, an {\it expression} is a word $\uw=(s_1, \dots, s_n)$ in $S$ and $\pi(\uw)=s_1\cdots s_n$ denotes the element in $W$ expressed by $\uw$. The set of expressions will be denoted $\Exp(W)$. The \emph{length} of an expression $\uw$ is its length as a word; it will be denoted $\ell(\uw)$. We will identify simple reflections with the corresponding $1$-letter expression whenever convenient. Given a pair $(s,t)$ of simple reflections, we will denote by $m_{s,t} \in \Z_{\geq 1} \cup \{\infty\}$ the order of $st$ in $W$, and by $\langle s,t \rangle$ the subgroup generated by $s$ and $t$.

We fix a field $\bk$ and a realization
\[
\fh= \bigl( V, (\alpha_s^\vee:s\in S), (\alpha_s:s\in S) \bigr)
\]
of $(W,S)$ over $\ku$ in the sense of Elias--Williamson~\cite[Definition~3.1]{ew}. In particular, $V$ is a finite-dimensional $\bk$-vector space, $(\alpha_s^\vee:s\in S)$ is a collection of vectors in $V$, $(\alpha_s:s\in S)$ is a collection of vectors in $V^*:=\Hom_\bk(V,\bk)$, and there exists an action of $W$ on $V$ such that for $s \in S$ and $v \in V$ we have
\[
s \cdot v=v-\langle\alpha_s,v\rangle\alpha_s^\vee.
\]
Realizations can be restricted to parabolic subsystems of $(W,S)$, by simply forgetting part of the elements $\alpha_s$ and $\alpha_s^\vee$: if $S' \subset S$ is a subset, and $W' \subset W$ is the subgroup generated by $S'$, we will denote by $\fh_{|W'}$ the realization $( V, (\alpha_s^\vee:s\in S'), (\alpha_s:s\in S') )$ of $(W',S')$.

In addition to the conditions appearing in this definition, it has been recently explained in~\cite{ew20} that some further restrictions have to be imposed in order for the theory developed in~\cite{ew} to behave as expected, most of which were made more explicit in~\cite{hazi}. Here we will assume that the following conditions are satisfied.
\begin{enumerate}
\item
\label{it:assumptions-1}
The realization is balanced (see~\cite[Definition~3.7]{ew}) and satisfies Demazure surjectivity (see~\cite[Assumption~3.9]{ew}).
\item
\label{it:assumptions-3}
In case $W$ admits a parabolic subgroup of type $\mathsf{H}_3$, we assume that there exists a linear combination of diagrams as in~\cite[Equation~(5.12)]{ew} which is sent to $0$ under the operation described in~\cite[\S 2]{ew20}. 
\item
\label{it:assumptions-4}
For any pair $(s,t)$ of distinct simple reflections such that $m_{s,t} < \infty$ we have
\[
\genfrac{[}{]}{0pt}{}{m_{s,t}}{k}_{s}(\langle \alpha_s^\vee, \alpha_t \rangle, \langle \alpha_t^\vee, \alpha_s \rangle) = 
\genfrac{[}{]}{0pt}{}{m_{s,t}}{k}_{t}(\langle \alpha_s^\vee, \alpha_t \rangle, \langle \alpha_t^\vee, \alpha_s \rangle) = 0
\]
for all integers $1 \leq k \leq m_{st}-1$, where we use the notation of~\cite{hazi}.
\end{enumerate}
The assumptions in~\eqref{it:assumptions-1} are standard in this theory. The second one is really necessary; the first one can usually be relaxed at the cost of a more complicated combinatorics (see e.g.~\cite[\S 7]{ew20}), but we will not consider this question here. Assumptions~\eqref{it:assumptions-3} and~\eqref{it:assumptions-4} are also necessary for the theory of~\cite{ew}, hence also for all of its applications, although this was not made explicit before~\cite{ew20,hazi}. (In particular, they should be imposed in~\cite{amrw1} and in~\cite{arv}.) Here, by the main result of~\cite{hazi},~\eqref{it:assumptions-4} implies the existence and rotatability of Jones--Wenzl projectors, which as explained in~\cite{ew20} plays a crucial role in this story. Note that~\eqref{it:assumptions-4} is also the technical condition imposed in~\cite{abe2} to ensure that the theory of~\cite{abe1} applies.


It is important to note that if the assumptions~\eqref{it:assumptions-1}--\eqref{it:assumptions-4} are satisfied by a realization, then they are satisfied by its restriction to any parabolic subsystem of $(W,S)$.
Further assumptions will be imposed and discussed in~\S\ref{ss:dual-real} and~\S\ref{ss:bifunctoriality}; they are also stable under restriction to a parabolic subsystem.

\begin{example}
\label{ex:realizations}
The main examples of data as above the reader can keep in mind are the following.

\begin{enumerate}
\item
\label{it:Cartan-real}
Let $A=(a_{i,j})_{i,j \in I}$ be a generalized Cartan matrix. A \emph{Kac--Moody root datum} associated with $A$ is a triple
\[
(\mathbf{X}, (\alpha_i : i \in I), (\alpha_i^\vee : i \in I))
\]
where $\mathbf{X}$ is a finite free $\Z$-module, $(\alpha_i : i \in I)$ is a family of elements of $\mathbf{X}$, and $(\alpha_i ^\vee: i \in I)$ is a family of elements of $\Hom_{\Z}(\mathbf{X},\Z)$, such that $\langle \alpha_i^\vee, \alpha_j \rangle = a_{i,j}$ for any $i,j \in I$. To $A$ one can associate a Coxeter system $(W,S)$ where $S$ is in bijection with $I$ (through a map $s \mapsto i_s$), and the order $m_{s,t}$ of $st$ is determined as follows: 
\[
m_{s,t} = \begin{cases}
2 & \text{if $a_{i_s i_t} a_{i_t i_s} =0$;} \\
3 & \text{if $a_{i_s i_t} a_{i_t i_s} =1$;} \\
4 & \text{if $a_{i_s i_t} a_{i_t i_s} =2$;} \\
6 & \text{if $a_{i_s i_t} a_{i_t i_s} =3$;} \\
\infty & \text{if $a_{i_s i_t} a_{i_t i_s} \geq 4$.} \\
\end{cases}
\]
To $(\mathbf{X}, (\alpha_i : i \in I), (\alpha_i^\vee : i \in I))$ one can associate a realization of $(W,S)$ over any field $\bk$ by setting $V:=\bk \otimes_{\Z} \Hom_{\Z}(\mathbf{X},\Z)$, and choosing for $(\alpha_s^\vee : s \in S)$ and $(\alpha_s : s \in S)$ the images of $(\alpha_i^\vee : i \in I)$ and $(\alpha_i : i \in I)$ in $V$ and $V^*$ respectively.

Such realizations are called \emph{Cartan realizations of crystallographic Coxeter groups}.
The status of our assumptions above for such realizations is discussed at length in~\cite[Chap.~II, \S 2.4 and~\S 3.2]{curso}. Regarding~\eqref{it:assumptions-1}, such realizations are always balanced; they satisfy Demazure surjectivity at least when $\mathrm{char}(\bk) \neq 2$. 
Assumption~\eqref{it:assumptions-3} is irrelevant since crystallographic Coxeter groups do not admit parabolic subgroups of type $\mathsf{H}_3$. Assumption~\eqref{it:assumptions-4} is automatically satisfied.

Cartan realizations of crystallographic Coxeter groups are the realizations considered in~\cite[Chap.~10--11]{amrw1} and~\cite{amrw2} (where more general coefficient rings are allowed.)
For such a realization, the associated Hecke category (see~\S\ref{ss:EW-category}) can be realized geometrically as a category of parity complexes on the flag variety of the Kac--Moody group associated with $(\mathbf{X}, (\alpha_i : i \in I), (\alpha_i^\vee : i \in I))$; see~\cite[Part~III]{rw} for details.
\item
\label{it:Cartan-real-finite}
In the special case when $A$ is a Cartan matrix, the datum of a Kac--Moody root datum of $A$ is equivalent to the datum of a based root datum with associated Cartan matrix $A$. In this case, $W$ is the Weyl group of the associated connected reductive algebraic group (over any algebraically closed field).
\item
\label{it:geom-real}
Let now $(W,S)$ be an arbitrary Coxeter system with $S$ finite. Let $V$ be the associated \emph{geometric representation} of $W$; it is a representation over $\mathbb{R}$, and comes with a basis $(e_s : s \in S)$ indexed by $S$ and a bilinear form $\langle -,- \rangle$. One can ``upgrade'' this representation to a realization of $(W,S)$ over $\mathbb{R}$, called the geometric realization, by setting $\alpha_s^\vee := e_s$ and $\alpha_s := 2 \langle e_s, - \rangle$. As explained in~\cite[Chap.~II, \S 2.4 and~\S 3.2]{curso}, for this realization our assumptions~\eqref{it:assumptions-1}
and~\eqref{it:assumptions-4} are satisfied. The status of assumption~\eqref{it:assumptions-3} (in case $(W,S)$ has a parabolic subsystem of type $\mathsf{H}_3$) is unclear to us.
\item
\label{it:Soergel-real}
For an arbitrary Coxeter system $(W,S)$ with $S$ finite, one can also consider variants of the geometric realization considered by Soergel in~\cite{soergel-bim}, see e.g.~\cite[Chap.~II, \S 1.2.2]{curso}. Namely, consider a vector space $V$ endowed with linearly independent families $(e_s : s \in S)$ of vectors of $V$ and $(e_s^* : s \in S)$ of vectors of $V^*$ such that
\[
 \langle e_t, e_s^* \rangle = -2\cos \left( \frac{\pi}{m_{s,t}} \right)
\]
where $m_{s,t}$ is the order of $st$ in $W$. (We use the convention that $\frac{\pi}{\infty}=0$. Note also that such data always exist.) Then $(V,(e_s : s \in S),(e_s^* : s \in S))$ is a realization of $(W,S)$,
see~\cite[Chap.~II, Remark~2.7]{curso}. These realizations will be called Soergel realizations. (Note that in case $W$ is finite, the geometric realization is an example of a Soergel realization.) In this case again, our assumptions~\eqref{it:assumptions-1}
and~\eqref{it:assumptions-4} are satisfied (see~\cite[Chap.~II, \S 2.4 and~\S 3.2]{curso}), but the status of assumption~\eqref{it:assumptions-3} (in case $(W,S)$ has a parabolic subsystem of type $\mathsf{H}_3$) is unclear to us. For such a realization, the Hecke category is equivalent to the corresponding category of Soergel bimodules by~\cite[\S 6.7]{ew}.
\end{enumerate}
\end{example}


\subsection{Gradings}
\label{ss:gradings}

By a ``graded'' (resp. ``bigraded'') vector space, we mean a $\Z$-graded (resp. $\Z^2$-graded) vector space. Whenever convenient, we will identify graded vector spaces with bigraded vector spaces which are zero in all degrees belonging to $(\Z \smallsetminus \{0\}) \times \Z$. If $M$ is graded, resp.~bigraded, its component in degree $n$, resp.~in bidegree $(n,m)$, will be denoted $M_n$, resp.~$M^n_m$. The shift-of-grading functor $(1)$ on a 
bigraded vector space $M=\oplus_{i,j\in\Z}M_j^i$ is defined by
\[
M(1)^i_j=M_{j+1}^{i+1}.
\]
We also have shift functors $[1]$ and $\langle 1\rangle:=(-1)[1]$, which satisfy $(M[1])^i_j=M_j^{i+1}$ and $(M\langle1\rangle)^i_j=M_{j-1}^{i}$. Note that $\langle1\rangle$ stabilizes graded vectors spaces.

We will work in particular with the symmetric algebras
\[
R:=\operatorname{Sym}(V^*), \quad
R^\vee:=\operatorname{Sym}(V), \quad
R^\wedge:=\operatorname{Sym}(V),
\]
considered as graded rings where $V^*\subset R$ is in degree $2$, $V\subset R^\vee$ is in degree $-2$ and $V\subset R^\wedge$ is in degree $2$. We will also consider the localization $Q$ of the ring $R$ with respect to the multiplicative subset generated by $\{w(\alpha_s):s\in S, \, w\in W\}$; this ring has a
natural grading where the elements $\frac{1}{w(\alpha_s)}$ are in degree $-2$. Analogously, we denote by $Q^\vee$ and $Q^\wedge$ the corresponding localizations of $R^\vee$ and $R^\wedge$.

Let $R^\vee\mhyphen\Mod^\Z\mhyphen R^\vee$ denote the category of graded $R^\vee$-bimodules, and define analogously $R^\wedge\mhyphen\Mod^\Z\mhyphen R^\wedge$. Then $\langle1\rangle$ induces autoequivalences of these categories, which will be denoted similarly. If $M$ belongs to $R^\vee\mhyphen\Mod^\Z\mhyphen R^\vee$, we let $M^\wedge$  be the graded $R^\wedge$-bimodule which is $M$ as ungraded bimodule and whose homogeneous components are $(M^\wedge)_i=M_{-i}$, $i\in\Z$. This induces a functor from $R^\vee\mhyphen\Mod^\Z\mhyphen R^\vee$ to $R^\wedge\mhyphen\Mod^\Z\mhyphen R^\wedge$ which satisfies
\begin{equation}
\label{eqn:wedge-shift}
(M \langle 1 \rangle)^\wedge = M^\wedge \langle -1 \rangle.
\end{equation}

Given a graded free right $R^\vee$-module, resp.~vector space, of finite rank $M\simeq\oplus_iR^\vee \langle n_i \rangle$, resp.~$M\simeq\oplus_i\bk \langle n_i\rangle$, we set
\[
\grk_{R^\vee}M:=\sum_i v^{-n_i}, \quad \text{resp.} \quad \grk_{\bk}M:=\sum_i v^{-n_i},
\]
considered as elements in $\Z[v,v^{-1}]$ where $v$ is an indeterminate. (In other words, if $V$ is a finite-dimensional $\bk$-vector space we have $\grk(V)=\sum_n \dim(V_{-n})v^n$.) Of course, if $M,N$ are graded free right $R^\vee$-module of finite rank, then $M\simeq N$ if and only if $\grk_{R^\vee} M=\grk_{R^\vee} N$. We define analogously the function $\grk_{R^\wedge}$. Note that if $M$ is a graded free right $R^\vee$-module, then $M^\wedge$ is a graded free right $R^\wedge$-module and we have
\begin{equation}
\label{eqn:grk-wedge}
\grk_{R^\wedge}M^\wedge=\overline{\grk_{R^\vee}M}
\end{equation}
where $\overline{\cdot}$ is the unique ring automorphism of $\Z[v,v^{-1}]$ such that $\overline{v}=v^{-1}$.

Below we will need the following application of the graded Nakayama lemma, where we consider $\bk$ (concentrated in degree $0$) as a graded $R^\vee$-module by letting $V$ act by zero.

\begin{lema}
\label{le:technical}
Let $M$ and $N$ be graded free right $R^\vee$-modules of finite rank and $f:M\to N$ be a morphism of graded $R^\vee$-modules. If 
\[
f\otimes_{R^\vee}\bk: M\otimes_{R^\vee}\bk \to N\otimes_{R^\vee}\bk
\]
is injective (resp.~surjective), then $f$ is injective (resp.~surjective).
\end{lema}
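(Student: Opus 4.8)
The plan is to deduce both assertions from the graded Nakayama lemma. The relevant form is the following: since $R^\vee=\operatorname{Sym}(V)$ is concentrated in non-positive degrees with $R^\vee_0=\bk$, any graded $R^\vee$-module $M$ whose grading is bounded above and which satisfies $M\otimes_{R^\vee}\bk=0$ must vanish (look at the top nonzero degree of $M$). All modules occurring below are graded free $R^\vee$-modules of finite rank or their sub- or quotient modules, hence have grading bounded above, so this applies to them.

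The surjective case is immediate: right exactness of $-\otimes_{R^\vee}\bk$ identifies $\coker(f)\otimes_{R^\vee}\bk$ with $\coker(f\otimes_{R^\vee}\bk)$, which is zero by hypothesis, so $\coker(f)=0$ by the above; that is, $f$ is surjective.

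For the injective case I would first isolate the auxiliary claim that a morphism $g\colon F\to F'$ between graded free $R^\vee$-modules of finite rank with $g\otimes_{R^\vee}\bk$ bijective is itself bijective. Granting the surjective case, $g$ is surjective, so $0\to\ker(g)\to F\xrightarrow{g}F'\to 0$ is exact and splits (as $F'$ is free); applying $-\otimes_{R^\vee}\bk$ keeps it exact, and bijectivity of $g\otimes_{R^\vee}\bk$ then forces $\ker(g)\otimes_{R^\vee}\bk=0$, whence $\ker(g)=0$. Now let $f\colon M\to N$ be such that $f\otimes_{R^\vee}\bk$ is injective, and fix a homogeneous $R^\vee$-basis $(m_1,\dots,m_a)$ of $M$. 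Its image in $M\otimes_{R^\vee}\bk$ is a basis, so the vectors $\overline{f(m_i)}=(f\otimes_{R^\vee}\bk)(\overline{m_i})$ are linearly independent in the finite-dimensional graded vector space $N\otimes_{R^\vee}\bk$. Choose homogeneous elements $n_1,\dots,n_b\in N$ completing $(\overline{f(m_i)})_i$ to a homogeneous basis of $N\otimes_{R^\vee}\bk$, form $F:=M\oplus\bigoplus_{j=1}^{b}R^\vee(-\deg n_j)$, and let $g\colon F\to N$ agree with $f$ on $M$ and send the $j$-th standard generator to $n_j$. Then $g\otimes_{R^\vee}\bk$ sends a basis of $F\otimes_{R^\vee}\bk$ to a basis of $N\otimes_{R^\vee}\bk$, hence is bijective, so $g$ is an isomorphism by the auxiliary claim; restricting to the summand $M\subseteq F$ exhibits $f$ as a split injection, in particular injective.

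The only genuinely delicate point is the injective direction: one cannot work directly with $\ker(f)$, since $\im(f)\subseteq N$ need not be free and so $\ker(f)$ need not be a direct summand of $M$. Enlarging $M$ to a free module $F$ mapping onto $N$ and applying graded Nakayama to $\ker(F\to N)$ is precisely the device that gets around this. (An alternative route to the same point is to apply $\Hom^{\bullet}_{R^\vee}(-,R^\vee)$ and use the natural isomorphism $\Hom^{\bullet}_{R^\vee}(N,R^\vee)\otimes_{R^\vee}\bk\cong\Hom_{\bk}(N\otimes_{R^\vee}\bk,\bk)$ for $N$ free of finite rank together with biduality, reducing injectivity of $f$ to surjectivity of its dual; but the argument above is more self-contained.)
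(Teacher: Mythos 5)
Your proof is correct. Both the injective and surjective cases follow, and you have carefully patched the one real subtlety: for injectivity one cannot directly apply Nakayama to $\ker(f)$ since $\operatorname{im}(f)$ need not be a free submodule, so $\ker(f)$ need not split off; your enlargement of $M$ to a free $F$ mapping onto $N$ and the auxiliary bijectivity claim handle this cleanly.

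Your route is genuinely different from the paper's. The paper treats the two-term complex $M\xrightarrow{f}N$ (placed in appropriate homological degrees) and invokes a ready-made ``graded Nakayama for complexes'' statement, namely~\cite[Lemma~3.4.1]{amrw1}, which asserts vanishing of cohomology of a bounded complex of graded free modules over a suitable positively- or negatively-graded ring provided the cohomology vanishes after $-\otimes_{R^\vee}\bk$; injectivity and surjectivity of $f$ are then read off as vanishing of $H^{-1}$ and $H^0$ of the appropriately shifted complex. You instead build a self-contained elementary argument from the scalar-level graded Nakayama lemma. For the surjective case the two approaches coincide in substance. For the injective case the paper hides the work in the black-box lemma, while you reprove the needed special case directly via the extension-to-an-isomorphism trick; this has the virtue of making the proof independent of~\cite{amrw1}, at the cost of a bit more writing. (Your parenthetical remark about dualizing with $\operatorname{Hom}^\bullet_{R^\vee}(-,R^\vee)$ is also a standard and correct alternative for the injective direction.)
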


\begin{proof}
Let $C$ denote the complex
\[
\cdots 0 \to M\xrightarrow{f} N \to 0\cdots
\]
with $M$, resp.~$N$, placed in degree $-1$, resp.~$0$. If $f\otimes_{R^\vee} \bk$ is injective, this complex satisfies the assumptions of~\cite[Lemma 3.4.1]{amrw1}. In particular, it follows that $H^{-1}(C)=0$, that is $f$ is injective. If $f\otimes_{R^\vee} \bk$ is surjective, we use \cite[Lemma 3.4.1]{amrw1} with the same complex with $M$, resp.~$N$, placed in degree $0$, resp.~$1$, to deduce that  $f$ is also surjective.
\end{proof}

\subsection{Dual realization}
\label{ss:dual-real}

We will also consider the dual realization
\[
\fh^*=(V^*, (\alpha_s:s\in S), (\alpha_s^\vee:s\in S))
\]
of $(W,S)$ over $\bk$. 
It is clear that this realization satisfies our assumptions~\eqref{it:assumptions-1}
and~\eqref{it:assumptions-4}. It is not clear (to us) whether assumption~\eqref{it:assumptions-3} is automatically satisfied; in doubt, we will assume that it is also satisfied by $\fh^*$. Note that the graded ring playing, with respect to $\fh^*$, the role that $R$ plays for $\fh$ is $R^\wedge$.


\begin{example}
 \phantomsection
 \label{ex:dual-real}
 \begin{enumerate}
  \item In the setting of Example~\ref{ex:realizations}\eqref{it:Cartan-real}, if $(\mathbf{X}, (\alpha_i : i \in I), (\alpha_i^\vee : i \in I))$ is a Kac--Moody root datum associated with a generalized Cartan matrix $A$, then
  \[
   (\Hom_{\Z}(\mathbf{X},\Z), (\alpha_i^\vee : i \in I), (\alpha_i : i \in I))
  \]
is a Kac--Moody root datum associated with the generalized Cartan matrix ${}^{\mathrm{t}} \hspace{-1pt} A$. These generalized Cartan matrices share the same associated Coxeter system $(W,S)$, and for any field $\bk$ the dual of the realization over $\bk$ associated with $(\mathbf{X}, (\alpha_i : i \in I), (\alpha_i^\vee : i \in I))$ is the realization over $\bk$ associated with $(\Hom_{\Z}(\mathbf{X},\Z), (\alpha_i^\vee : i \in I), (\alpha_i : i \in I))$. Note that this ``duality'' of Kac--Moody root data restricts to Langlands' duality in the setting of Example~\ref{ex:realizations}\eqref{it:Cartan-real-finite}.
\item
\label{it:dual-soergel-real}
Consider the setting of Example~\ref{ex:realizations}\eqref{it:Soergel-real}. By definition the dual of a Soergel realization is also a Soergel realization. In particular, in case $W$ is finite, the geometric realization of Example~\ref{ex:realizations}\eqref{it:geom-real} is self dual.
 \end{enumerate}
\end{example}

\section{Two incarnations of the Hecke category}
\label{sec:soergel}

We continue with our data $(W,S)$ and $\fh$ as in Section~\ref{sec:prelim}, which satisfy the conditions of~\S\ref{ss:convention} and~\S\ref{ss:dual-real}.
We recall in this section the definitions of the Elias--Williamson diagrammatic category and of Abe's category attached to $\fh$ and $(W,S)$.

\subsection{The Elias--Williamson diagrammatic category}
\label{ss:EW-category}

We will denote by
\[
\mathscr{D}_{\mathrm{BS}}(\fh,W)
\]
the category attached to $(W,S)$ and $\fh$ in~\cite[\S 5.2]{ew} (see also~\cite[Chap.~II, \S 2.5]{curso}). This category is a $\bk$-linear monoi\-dal category, which can be considered equivalently as enriched over graded vector spaces or endowed with a shift-of-grading autoequivalence $(1)$, see~\cite[\S 2.1]{arv}. From the first of these points of view, the objects in $\mathscr{D}_{\mathrm{BS}}(\fh,W)$ are the symbols $B_{\uw}$ for $\uw \in \Exp(W)$. The monoidal product is defined by $B_{\uv} \star B_{\uw} = B_{\uv\uw}$ where $\uv\uw$ is the concatenation of $\uv$ and $\uw$. The morphisms are generated (under horizontal and vertical concatenation, and $\bk$-linear combinations) by morphisms depicted by some diagrams recalled below, and are subject to a number of relations for which we refer to~\cite{ew},~\cite[\S 2.3]{amrw1} or~\cite{curso}. The generating morphisms are (to be read from bottom to top):
\begin{enumerate}
\item
for any homogeneous $f \in R$, a morphism
\[
    \begin{tikzpicture}[thick,scale=0.07,baseline]
      \node at (0,0) {$f$};
      \draw[dotted] (-5,-5) rectangle (5,5);
    \end{tikzpicture}
\]
from $B_\varnothing$ to itself, of degree $\deg(f)$;
\item
for any $s \in S$, ``dot'' morphisms
\[
       \begin{tikzpicture}[thick,scale=0.07,baseline]
      \draw (0,-5) to (0,0);
      \node at (0,0) {$\bullet$};
      \node at (0,-6.7) {\tiny $s$};
    \end{tikzpicture}
    \qquad \text{and} \qquad
      \begin{tikzpicture}[thick,baseline,xscale=0.07,yscale=-0.07]
      \draw (0,-5) to (0,0);
      \node at (0,0) {$\bullet$};
      \node at (0,-6.7) {\tiny $s$};
    \end{tikzpicture}
\]
from $B_s$ to $B_\varnothing$ and from $B_\varnothing$ to $B_s$ respectively, of degree $1$;
\item
for any $s \in S$, trivalent morphisms
\[
        \begin{tikzpicture}[thick,baseline,scale=0.07]
      \draw (-4,5) to (0,0) to (4,5);
      \draw (0,-5) to (0,0);
      \node at (0,-6.7) {\tiny $s$};
      \node at (-4,6.7) {\tiny $s$};
      \node at (4,6.7) {\tiny $s$};      
    \end{tikzpicture}
    \qquad \text{and} \qquad
        \begin{tikzpicture}[thick,baseline,scale=-0.07]
      \draw (-4,5) to (0,0) to (4,5);
      \draw (0,-5) to (0,0);
      \node at (0,-6.7) {\tiny $s$};
      \node at (-4,6.7) {\tiny $s$};
      \node at (4,6.7) {\tiny $s$};    
    \end{tikzpicture}
\]
from $B_s$ to $B_{(s,s)}$ and from $B_{(s,s)}$ to $B_s$ respectively, of degree $-1$;
\item
for any pair $(s,t)$ of distinct simple reflections such that $st$ has finite order $m_{st}$ in $W$, a morphism
\[
    \begin{tikzpicture}[yscale=0.5,xscale=0.3,baseline,thick]
\draw (-2.5,-1) to (0,0) to (-1.5,1);
\draw (-0.5,-1) to (0,0);
\draw[red] (-1.5,-1) to (0,0) to (-2.5,1);
\draw[red] (0,0) to (-0.5,1);
\node at (-2.5,-1.3) {\tiny $s$\vphantom{$t$}};
\node at (-1.5,1.3) {\tiny $s$\vphantom{$t$}};
\node at (-0.5,-1.3) {\tiny $s$\vphantom{$t$}};
\node at (-1.5,-1.3) {\tiny $t$};
\node at (-2.5,1.3) {\tiny $t$};
\node at (-0.5,1.3) {\tiny $t$};
\node at (0.6,-0.7) {\small $\cdots$};
\node at (0.6,0.7) {\small $\cdots$};
\draw (2.5,-1) -- (0,0);
\draw[red] (2.5,1) -- (0,0);
\node at (2.5,-1.3) {\tiny $s$\vphantom{$t$}};
\node at (2.5,1.3) {\tiny $t$};
\end{tikzpicture} \text{ if $m_{st}$ is odd or}
    \begin{tikzpicture}[yscale=0.5,xscale=0.3,baseline,thick]
\draw (-2.5,-1) to (0,0) to (-1.5,1);
\draw (-0.5,-1) to (0,0);
\draw[red] (-1.5,-1) to (0,0) to (-2.5,1);
\draw[red] (0,0) to (-0.5,1);
\node at (-2.5,-1.3) {\tiny $s$\vphantom{$t$}};
\node at (-1.5,1.3) {\tiny $s$\vphantom{$t$}};
\node at (-0.5,-1.3) {\tiny $s$\vphantom{$t$}};
\node at (-1.5,-1.3) {\tiny $t$};
\node at (-2.5,1.3) {\tiny $t$};
\node at (-0.5,1.3) {\tiny $t$};
\node at (0.6,-0.7) {\small $\cdots$};
\node at (0.6,0.7) {\small $\cdots$};
\draw[red] (2.5,-1) -- (0,0);
\draw (2.5,1) -- (0,0);
\node at (2.5,-1.3) {\tiny $t$};
\node at (2.5,1.3) {\tiny $s$\vphantom{$t$}};
\end{tikzpicture} \text{ if $m_{st}$ is even}
\]
from $B_{(s,t,\cdots)}$ to $B_{(t,s,\cdots)}$ (where each expression has length $m_{st}$, and colors alternate), of degree $0$.
\end{enumerate}
The graded vector space of morphisms from $B_{\uw}$ to $B_{\uv}$ will be denoted
\[
\Hom_{\DiagBS(\fh,W)}^\bullet(B_{\uw},B_{\uv}).
\]
Considering $\DiagBS(\fh,W)$ as a category with shift-of-grading autoequivalence $(1)$, its objects are the symbols $B_{\uw}(n)$ where $\uw \in \Exp(W)$ and $n \in \Z$, and the vector space of morphisms from $B_{\uw}(n)$ to $B_{\uv}(m)$ is $\Hom_{\DiagBS(\fh,W)}^{m-n}(B_{\uw},B_{\uv})$. This is the point of view we will mostly use below.

More generally, given $X,Y$ in $\DiagBS(\fh,W)$ we will set
\[
\Hom_{\DiagBS(\fh,W)}^\bullet(X,Y) := \bigoplus_{n \in \Z} \Hom_{\DiagBS(\fh,W)}(X,Y(n)).
\]
This graded vector space has a natural structure of
graded $R$-bimodule, where the left (resp.~right) action of $f \in R_n$ is given by adding a box labelled by $f$ to the left (resp.~right) of a diagram. With this structure, $\Hom_{\DiagBS(\fh,W)}^\bullet(X,Y)$ is graded free of finite rank as a left $R$-module and as a right $R$-module, see~\cite[Corollary~6.14]{ew}.

We will denote by $\DiagBS^\oplus(\fh,W)$ the additive hull of $\DiagBS(\fh,W)$, and by $\Diag(\fh,W)$ the karoubian envelope of $\DiagBS^\oplus(\fh,W)$. The latter category is Krull--Schmidt, and there exists a family $(B_w : w \in W)$ of objects in $\DiagBS(\fh,W)$ characterized in~\cite[Theorem 6.26]{ew} and such that the assignment $(w,n) \mapsto B_w(n)$ induces a bijection between $W \times \Z$ and the set of isomorphism classes of indecomposable objects in $\Diag(\fh,W)$.

We will also consider the category $\oDiagBS(\fh,W)$ which has the same objects as $\DiagBS(\fh,W)$, and such that the morphism space from $X$ to $Y$ is the subspace of degree-$0$ elements in the graded vector space
\[
\bk \otimes_R \Hom_{\DiagBS(\fh,W)}^\bullet(X,Y)
\]
(where $\bk$ is in degree $0$ and $R$ acts via the quotient $R/V \cdot R=\bk$).
The functor $(1)$ induces an autoequivalence of $\oDiagBS(\fh,W)$ which will be denoted similarly, and $\oDiagBS(\fh,W)$ is naturally a right module category for the monoidal category $\DiagBS(\fh,W)$. We have a natural functor
\begin{equation}
\label{eqn:Diag-oDiag}
\DiagBS(\fh,W) \to \oDiagBS(\fh,W);
\end{equation}
the image of $B_{\uw}$ under this functor will be denoted $\oB_{\uw}$.
As for $\DiagBS(\fh,W)$, we will denote by $\oDiagBS^\oplus(\fh,W)$ the additive hull of $\oDiagBS(\fh,W)$, and by $\oDiag(\fh,W)$ the karoubian envelope of $\oDiagBS^\oplus(\fh,W)$.

The functor~\eqref{eqn:Diag-oDiag} induces a functor $\DiagBS^\oplus(\fh,W) \to \oDiagBS^\oplus(\fh,W)$, and then a functor $\Diag(\fh,W) \to \oDiag(\fh,W)$. The category $\oDiag(\fh,W)$ is Krull--Schmidt, being karoubian and with finite-dimensional morphism spaces, see~\cite[Corollary~A.2]{cyz}. For $w \in W$, we will denote by $\oB_w$ the image of $B_w$ in $\oDiag(\fh,W)$. Then $\End_{\oDiag(\fh,W)}(\oB_w)$ is a quotient of $\End_{\Diag(\fh,W)}(B_w)$, hence is a local ring, which implies that $\oB_w$ is an indecomposable object. Using this, it is easily seen that the assignment $(w,n) \mapsto \oB_w(n)$ induces a bijection between $W \times \Z$ and the set of isomorphism classes of indecomposable objects in $\oDiag(\fh,W)$.

We will also denote by $\uDiagBS(\fh,W)$, $\uDiagBS^\oplus(\fh,W)$ and $\uDiag(\fh,W)$ the categories obtained in the same way using the tensor product on the \emph{right} with $\bk$ over $R$. Here $\uDiagBS(\fh,W)$ is naturally a \emph{left} module category over $\DiagBS(\fh,W)$, and the same considerations as above for $\oDiagBS(\fh,W)$ apply. We will use obvious variants of the notations introduced above; in particular, for $w \in W$, resp.~for $\uw \in \Exp(W)$, we define the object $\uB_w \in \uDiag(\fh,W)$, resp.~$\uB_{\uw} \in \uDiagBS(\fh,W)$, in the same way as for $\oB_w$, resp.~$\oB_{\uw}$.


\begin{rmk}
\phantomsection
\label{rmk:DBS}
\begin{enumerate}
\item
\label{it:rmk-DBS-dual}
Of course, all the constructions above can also be considered for the realization $\fh^*$ of~\S\ref{ss:dual-real}, giving rise to the category $\DiagBS(\fh^*,W)$ and all its cousins. To distinguish the two cases, the object of $\DiagBS(\fh^*,W)$ attached to an expression $\uw$ will be denoted $B^\wedge_\uw$. Similar conventions will be used for the objects $B_w$, $\oB_w$, $\uB_w$.
\item
\label{it:rmk-symmetry-DBS}
It is a standard fact that the category $\DiagBS(\fh,W)$ admits a canonical autoequivalence induced by reflecting diagrams along a vertical axis. This autoequivalence exchanges left and right multiplication by polynomials, hence induces an equivalence between $\uDiagBS(\fh,W)$ and $\oDiagBS(\fh,W)$.
\end{enumerate}
\end{rmk}

\subsection{Abe's category}
\label{ss:Abe}

Below we will also use a different incarnation of the Hecke category attached to $(W,S)$ and $\fh$, which we will denote by $\AbeBS(\fh,W)$, and which was introduced by Abe in~\cite{abe1}.

\begin{rmk}
Although this is not written explicitly, the conventions on realizations in~\cite{abe1,abe2} are different from those of~\cite{ew,ew20} (which we follow here).
Namely, in~\cite{abe1,abe2} a realization is a triple $(V, (\alpha_s : s\in S), (\alpha_s^\vee : s\in S))$ where $\alpha_s\in V$ and $\alpha_s^\vee\in V^*$, and the algebra $R$ is defined as the symmetric algebra of $V$. In other words, the module ``$V$'' of~\cite{abe1,abe2} is the module ``$V^*$'' of~\cite{ew,ew20}. Here we have decided to follow the conventions of~\cite{ew,ew20}; we will therefore translate all the results and constructions from~\cite{abe1,abe2} into these conventions.
\end{rmk}

In order to construct the category $\AbeBS(\fh,W)$, Abe first introduces the category $\cC(\fh,W)$ (denoted $\mathcal{C}'$ in~\cite{abe1,abe2}\footnote{In fact the definition of $\mathcal{C}'$ in~\cite{abe1} is slightly different, but this creates difficulties. The definition we use below solves these problems, as explained in~\cite{abe2}.}) whose objects are the triples
\[
(M,(M_{Q}^w)_{w\in W},\xi_M)
\]
where $M$ is a graded $R$-bimodule, each $M_{Q}^w$ is a graded $(R,Q)$-bimodule such that $m \cdot f=w(f) \cdot m$ for any $m\in M_{Q}^w$ and $f\in R$, these bimodules being $0$ except for finitely many $w$'s, and
\begin{equation}
\label{eqn:xi-Abe}
\xi_M:M\otimes_R Q \to \bigoplus_{w\in W}M_Q^w 
\end{equation}
is an isomorphism of graded $(R,Q)$-bimodules. A morphism in $\cC(\fh,W)$ from the object $(M,(M_{Q}^w)_{w\in W},\xi_M)$ to $(N,(N_{Q}^w)_{w\in W},\xi_N)$ is a morphism of graded $R$-bimodules $\varphi:M \to N$  such that 
\[
\left(\xi_N\circ(\varphi\otimes_{R}Q)\circ\xi_M^{-1}\right)(M_{Q}^w)\subset N_{Q}^w 
\]
for any $w\in W$. This category has a natural monoidal structure induced by $\otimes_{R}$, with neutral object the $R$-bimodule $R$ (upgraded in the obvious way to an object of $\cC(\fh,W)$). The shift-of-grading functor $\langle 1 \rangle$ (see~\S\ref{ss:gradings}) induces in the natural way an autoequivalence of $\cC(\fh,W)$, which will be denoted similarly. For simplicity, we often write $M$ for $(M,(M_{Q}^w)_{w\in W},\xi_M)$.

For $s\in S$, let
\[
R^s=\{f\in R\mid s \cdot f=f\}
\]
be the subring of $s$-invariant elements in $R$,
and choose an element $\delta_s\in V^*$ such that $\langle\delta_s,\alpha_s^\vee\rangle=1$. (Such a vector exists because our realization is assumed to satisfy Demazure surjectivity.) The $R$-bimodule $B_s=R \otimes_{R^s} R \langle -1 \rangle$ can be upgraded to an object in $\cC(\fh,W)$ by setting
\begin{equation}
\label{eq:Bs in Abe}
(B_s)^e_{Q}=Q(\delta_s\otimes 1-1\otimes s(\delta_s)), \quad
(B_s)^s_{Q}=Q(\delta_s\otimes 1-1\otimes\delta_s)
\end{equation}
and $(B_s)^w_{Q}=0$ for all $w\notin\{e,s\}$; see~\cite[\S2.4]{abe1} or~\cite[Chap.~II, \S 3.1.4]{curso}.

The category $\AbeBS(\fh,W)$ is defined as the smallest full subcategory of $\cC(\fh,W)$ which contains the neutral object $R$ and the objects $(B_s : s \in S)$ and is stable under the monoidal product $\otimes_R$ and the shift functor $\langle 1 \rangle$. In other words, the objects in $\AbeBS(\fh,W)$ are the objects of the form
\[
B_{s_1}\otimes_R\cdots\otimes_R B_{s_r} \langle n \rangle
\]
with $r \in \Z_{\geq 0}$, $s_1, \cdots, s_r \in S$ and $n\in\Z$. As in the diagrammatic category we will set
\[
\Hom^\bullet_{\AbeBS(\fh,W)}(X,Y) = 
\bigoplus_{n \in \Z} \Hom_{\AbeBS(\fh,W)}(X,Y \langle -n \rangle),
\]
considered as a graded vector space with $\Hom_{\AbeBS(\fh,W)}(X,Y \langle -n \rangle)$ in degree $n$.
We will denote by $\AbeBS^\oplus(\fh,W)$ the additive hull of $\AbeBS(\fh,W)$, and by $\Abe(\fh,W)$ the karoubian envelope of $\AbeBS^\oplus(\fh,W)$.

For an expression $\uw=(s_1, \cdots, s_n)$, we define the object $B_{\uw}$ in $\AbeBS(\fh,W)$ as
\[
B_{\uw}:=B_{s_1}\otimes_R\cdots\otimes_R B_{s_n}=R\otimes_{R^{s_1}}\cdots \otimes_{R^{s_n}}R \langle -n \rangle
\]
if $n \geq 1$, and $B_\varnothing=R$. The so-called $1$-tensor element
\[
u_{\uw}=(1\otimes 1)\otimes_R\cdots\otimes_R(1\otimes 1)\in B_{\uw}
\]
plays a singular role in the theory. (In case $\uw=\varnothing$ is the empty expression, the element $u_\varnothing$ is interpreted as $1 \in R$.)

As the reader might have noticed, we have used the same notation as for some objects in $\DiagBS(\fh,W)$. This should not lead to any confusion, because of the following result due to Abe (see~\cite[Theorem 3.15]{abe2}).

\begin{theorem}
\label{thm:Abe-EW}
There exists an equivalence of monoidal categories
\[
\DiagBS(\fh,W)\simto\AbeBS(\fh,W)
\]
which intertwines the autoequivalences $(1)$ and $\langle -1 \rangle$ and
sends $B_\uw$ to $B_\uw$ for any $\uw \in \Exp(W)$. 
\end{theorem}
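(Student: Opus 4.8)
The plan is to build a monoidal functor $F\colon \DiagBS(\fh,W)\to\AbeBS(\fh,W)$ from the presentation of $\DiagBS(\fh,W)$ by generators and relations, and then to prove it is an equivalence by a graded-rank argument. On objects, monoidality together with $F(B_\varnothing)=R$ forces $F(B_\uw)=B_\uw$ (Abe's version $R\otimes_{R^{s_1}}\cdots\otimes_{R^{s_n}}R\langle-n\rangle$, upgraded to an object of $\cC(\fh,W)$ through the bimodules $(B_s)^e_Q$ and $(B_s)^s_Q$ recorded in \S\ref{ss:Abe}). The substance is to define $F$ on the four families of generating morphisms: a polynomial box $f\in R$ goes to multiplication by $f$ on $R$; the two ``dot'' morphisms go to the multiplication $R\otimes_{R^s}R\to R$ and to a comultiplication-type section $R\to R\otimes_{R^s}R$ determined by the chosen $\delta_s$ (which exists by Demazure surjectivity); the trivalent morphisms go to the maps $B_s\to B_s\otimes_R B_s$ and $B_s\otimes_R B_s\to B_s$ coming from the $R^s$-coalgebra structure on $B_s$; and the $2m_{st}$-valent morphism goes, after restriction to the rank-two parabolic $\langle s,t\rangle$, to a map assembled from Demazure operators and the Jones--Wenzl-type element whose existence and rotatability are guaranteed by Assumption~\eqref{it:assumptions-4} together with the main theorem of \cite{hazi}. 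For each assignment one must check that the proposed $R$-bimodule map is actually a morphism in $\cC(\fh,W)$, i.e.\ that it respects the decomposition $\xi_M$; this is a direct computation with the explicit $(M_Q^w)$-data of Bott--Samelson objects.

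One then verifies that $F$ respects all the Elias--Williamson relations. The one-colour relations involve only the object $B_s$, whose underlying $R$-bimodule in $\cC(\fh,W)$ is literally $R\otimes_{R^s}R\langle-1\rangle$, so these checks reduce to exactly the computations Elias--Williamson carry out for (reflection-faithful) Soergel bimodules, now performed with the structure maps above. The two-colour relations --- the $2m_{st}$-valent Zamolodchikov relations and cyclicity of the $2m_{st}$-valent vertex --- are the crux. Since realizations, and Assumptions~\eqref{it:assumptions-1}--\eqref{it:assumptions-4}, all restrict to the parabolic subsystem $(\langle s,t\rangle,\{s,t\})$, these reduce to computations inside $\AbeBS(\fh_{|\langle s,t\rangle},\langle s,t\rangle)$, where the finite dihedral combinatorics together with rotatability of the Jones--Wenzl projector make them tractable. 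This is the step where Abe's papers \cite{abe1,abe2} do the heavy lifting, and I expect it to be the main obstacle.

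Finally, $F$ is an equivalence. Essential surjectivity is immediate, since $\AbeBS(\fh,W)$ is by definition the monoidal subcategory of $\cC(\fh,W)$ generated under $\otimes_R$ and $\langle1\rangle$ by $R$ and the objects $B_s$, all of which lie in the image of $F$. For full faithfulness it suffices to show that, for all expressions $\ux,\uy$, the map
\[
F\colon\Hom^\bullet_{\DiagBS(\fh,W)}(B_\ux,B_\uy)\longrightarrow\Hom^\bullet_{\AbeBS(\fh,W)}(B_\ux,B_\uy)
\]
is an isomorphism of graded right $R$-modules. The source is graded free of finite rank over $R$ by \cite[Corollary~6.14]{ew}, with graded rank given by the standard ``expression'' formula in the Hecke algebra, and Abe proves the analogous freeness with the same graded rank for the target. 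Because $F$ is monoidal and sends the generating morphisms to their Abe-side counterparts, it carries Libedinsky's double-leaves basis of the source to the corresponding double-leaves family in the target, which therefore spans; hence $F$ is surjective on $\Hom^\bullet$. A graded surjection of graded free right $R$-modules of the same finite graded rank is an isomorphism: apply $(-)\otimes_R\bk$ to obtain a surjection of finite-dimensional graded vector spaces of equal dimension, hence an isomorphism, and conclude with the graded Nakayama lemma (cf.\ Lemma~\ref{le:technical}). Thus $F$ is fully faithful; it intertwines $(1)$ with $\langle-1\rangle$ and sends $B_\uw$ to $B_\uw$ by construction, so it is the required equivalence.
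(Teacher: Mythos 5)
The paper does not give its own proof of Theorem~\ref{thm:Abe-EW}; it cites Abe's result (\cite[Theorem 3.15]{abe2}), noting only that part (a), construction of the functor, rests on the computations of \cite{abe2}, while part (b), the verification that it is an equivalence, follows from \cite{abe1}. Your sketch reproduces that outline (construct $F$ on generators, verify relations, deduce equivalence from graded-rank and spanning considerations), and is in the right spirit, but two points drift from the actual argument.

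First, the image of the $2m_{st}$-valent vertex on the Abe side is not ``a map assembled from Demazure operators and the Jones--Wenzl-type element.'' It is the unique $R$-bimodule morphism $B_{w(s,t)} \to B_{w(t,s)}$ in $\cC(\fh,W)$ sending the one-tensor $u_{w(s,t)}$ to $u_{w(t,s)}$; this is exactly what \cite[Theorem~3.9]{abe2} provides, as recorded for the dual realization in~\eqref{eq:varphi st}. Jones--Wenzl rotatability and the Demazure calculus feed into the \emph{existence} proof and into the relation checks, not into the definition of the morphism itself. Second, in the full-faithfulness step the phrase ``it carries Libedinsky's double-leaves basis of the source to the corresponding double-leaves family in the target, which therefore spans'' has a misplaced ``therefore'': the fact that the target's double-leaves family spans $\Hom^\bullet_{\AbeBS(\fh,W)}$ (indeed forms a basis) is Abe's basis theorem \cite[Theorem~4.6]{abe1}, not a formal consequence of $F$ being monoidal. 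Once that theorem is invoked explicitly, $F$ sends a basis to a basis and is therefore an isomorphism on $\Hom^\bullet$ directly; the graded-Nakayama detour via Lemma~\ref{le:technical} is then optional, though it does work once the spanning step is properly justified.
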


\begin{rmk}
The proof of Theorem~\ref{thm:Abe-EW} has two parts: (a) the construction of the functor, and (b) the proof that it is an equivalence. Once (a) has been solved in the appropriate way, (b) is guaranteed by the results of~\cite{abe1}. The current proof of (a) relies on the computations in~\cite{abe2}.
We believe that considerations similar (or, in a sense, ``Koszul dual'') to those in Sections~\ref{sec:functor V}--\ref{sec:construction} (using, for $W$ finite, the indecomposable object $B_{w_0}$ rather than $\cT_{w_0}$) can be used to provide an alternative construction of this functor.
Details will appear elsewhere if this finds any application.
\end{rmk}

\section{Perverse and tilting sheaves}\label{sec:tilting}

In this section we briefly recall the definitions of a series of homotopy-type categories constructed from the Hecke category. These categories are the main objects of study of~\cite{amrw1} and~\cite{arv}. We also extend some results of~\cite[Chap.~10--11]{amrw1} to our present setting. 

\subsection{The biequivariant category}
\label{ss:BE}

The biequivariant category $\BE(\fh,W)$ is defined in \cite[\S4.2]{amrw1} as
\[
\BE(\fh,W) := \Kb \DiagBS^\oplus(\fh,W);
\]
the natural functor from $\BE(\fh,W)$ to $\Kb \Diag(\fh,W)$ is an equivalence, see~\cite[Lemma 4.9.1]{amrw1}, and we will therefore identify these categories whenever convenient.
The biequivariant category is monoidal for a certain product $\ustar$ which restricted to $\DiagBS(\fh,W)$ coincides with $\star$ and is triangulated on both sides; see~\cite[\S 4.2]{amrw1} for details.

The cohomological shift functors on the triangulated category $\BE(\fh,W)$ is denoted $[1]$. The shift-of-grading functor $(1)$ on $\BE(\fh,W)$ is the functor sending a complex $(\cF^n, d^n)_{n \in \Z}$ to the complex $(\cF^n(1), -d^n)_{n \in \Z}$. We also have the shift functor $\langle1\rangle=(-1)[1]$. 

As in \cite[\S 4.2]{amrw1}, given $\cF,\cG$ in $\BE(\fh,W)$, we will denote by 
\[
\HHom_{\BE(\fh,W)}(\cF,\cG)
\]
the bigraded $\bk$-vector space whose homogeneous components are
\[
\HHom_{\BE(\fh,W)}(\cF,\cG)_j^i:=\Hom_{\BE(\fh,W)}(\cF,\cG[i]\langle-j\rangle)
\]
for all $i,j\in\Z$. We also set $\EEnd_{\BE(\fh,W)}(\cF)=\HHom_{\BE(\fh,W)}(\cF,\cF)$.

Following~\cite[Example~4.2.2]{amrw1}, we define the standard object $\Delta_\uw$ and the costandard object $\nabla_\uw$ for any expression $\uw=(s_1, \dots, s_n)$ as
\[
\Delta_\uw=\Delta_{s_1}\ustar\cdots\ustar\Delta_{s_n}\quad\mbox{and}\quad\nabla_\uw=\nabla_{s_1}\ustar\cdots\ustar\nabla_{s_n},
\]
where $\Delta_s$ and $\nabla_s$ denote the complexes
\begin{equation}
\label{eq:Delta s}
 \cdots 0 \to B_s \xrightarrow{\begin{tikzpicture}[thick,scale=0.07,baseline]
      \draw (0,-5) to (0,0);
      \node at (0,0) {$\bullet$};
    \end{tikzpicture}} B_\varnothing(1) \to 0 \cdots
    \quad\mbox{and}\quad
    \cdots 0 \to B_\varnothing(-1) \xrightarrow{\begin{tikzpicture}[thick,baseline,xscale=0.07,yscale=-0.07]
      \draw (0,-5) to (0,0);
      \node at (0,0) {$\bullet$};
    \end{tikzpicture}} B_s \to 0 \cdots
\end{equation}
concentrated in degrees $0$ and $1$, and $-1$ and $0$, respectively. (By convention, $\Delta_\varnothing=\nabla_\varnothing=B_\varnothing$.)

On the other hand, standard and costandard objects $\Delta_w$ and $\nabla_w$ in $\BE(\fh,W)$ were defined for every $w\in W$ in~\cite[\S6.3]{arv}.\footnote{In~\cite[\S6.3]{arv} such objects are defined for certain subsets $I \subset W$ containing $w$. Here we take $I=W$, and omit it from the notation, following the conventions in~\cite{arv}. The same comment applies to various constructions from~\cite{arv} considered below.} We have
\begin{equation}
\label{eq:ARV Prop 6.11}
\Delta_w\simeq\Delta_\uw\quad\mbox{and}\quad\nabla_w\simeq\nabla_\uw
\end{equation}
if $\uw$ is a reduced expression for $w$, see~\cite[Proposition 6.11]{arv}.

A t-structure on $\BE(\fh,W)$ is constructed in~\cite[\S7.2]{arv}; its heart will be denoted $\fP_{\BE}(\fh,W)$. It turns out that the (co)standard objects $\Delta_w$ and $\nabla_w$ ($w \in W$) belong to $\fP_{\BE}(\fh,W)$, see~\cite[Proposition~7.8]{arv}, and that the shift functor $\langle1\rangle$ is t-exact, see~\cite[Lemma 7.3]{arv}. For every $w\in W$, there is (up to scalar) a unique nonzero morphism $f_w:\Delta_w\to\nabla_w$ \cite[Lemma 6.6]{arv}; we let
\[
\rL_w:=\operatorname{Im} (f_w)
\]
be the image of this morphism in $\fP_{\BE}(\fh,W)$. Then the assignment $(w,n) \mapsto \rL_w\langle n\rangle$ induces a bijection between $W\times\Z$ and the set of isomorphism classes of simple objects in $\fP_{\BE}(\fh,W)$, see~\cite[\S8.1]{arv}.

The following statement, which follows from the construction of standard and costandard objects via the recollement formalism of~\cite[\S 5]{arv}, will be required below.

\begin{lema}
\label{lem:cone-D-N}
 For any $w \in W$, the cone of any nonzero morphism $\Delta_w \to \nabla_w$ in $\BE(\fh,W)$ belongs to the triangulated subcategory generated by the objects of the form $\Delta_v(n)$ with $v \in W$ satisfying $v<w$ and $n \in \Z$.
\end{lema}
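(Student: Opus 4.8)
The plan is to reduce to the case of a reduced expression and then to proceed by induction on the length $\ell(w)$, using the recollement formalism recalled from~\cite[\S 5]{arv}. First I would fix a reduced expression $\uw = (s_1, \dots, s_n)$ for $w$, so that by~\eqref{eq:ARV Prop 6.11} we may replace $\Delta_w$ and $\nabla_w$ by $\Delta_\uw = \Delta_{s_1} \ustar \cdots \ustar \Delta_{s_n}$ and $\nabla_\uw = \nabla_{s_1} \ustar \cdots \ustar \nabla_{s_n}$; the nonzero morphism $f_w$ corresponds under this identification to the $n$-fold $\ustar$-product of the canonical maps $\Delta_s \to \nabla_s$ (uniqueness up to scalar from~\cite[Lemma 6.6]{arv} guarantees this matches). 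The base case $n = 0$ is trivial (the cone is zero), and for $n = 1$, i.e.\ $w = s$, one computes directly from~\eqref{eq:Delta s} that the cone of $\Delta_s \to \nabla_s$ is isomorphic to $B_\varnothing(-1) \oplus B_\varnothing(1) = \Delta_\varnothing(-1) \oplus \Delta_\varnothing(1)$, which lies in the subcategory generated by the $\Delta_v(m)$ with $v < s$ (here $v = e$) and $m \in \Z$.

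For the inductive step, write $w = w' s_n$ with $\ell(w') = n-1$, so $\Delta_\uw = \Delta_{\uw'} \ustar \Delta_{s_n}$ and $\nabla_\uw = \nabla_{\uw'} \ustar \nabla_{s_n}$, and $f_w = f_{w'} \ustar f_{s_n}$. I would factor this morphism as
\[
\Delta_{\uw'} \ustar \Delta_{s_n} \xrightarrow{f_{w'} \ustar \id} \nabla_{\uw'} \ustar \Delta_{s_n} \xrightarrow{\id \ustar f_{s_n}} \nabla_{\uw'} \ustar \nabla_{s_n},
\]
and analyze the cone of $f_w$ via the octahedral axiom applied to this composition. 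The cone of the first map is $C' \ustar \Delta_{s_n}$, where $C'$ is the cone of $f_{w'}$; by the inductive hypothesis $C'$ lies in the triangulated subcategory generated by the $\Delta_v(m)$ with $v < w'$, and since $\Delta_v \ustar \Delta_{s_n} \simeq \Delta_{vs_n}$ while $v < w' \leq w$ and (using the subword characterization of the Bruhat order, or the standard fact that $v < w'$ implies $vs_n < w' s_n = w$ or $vs_n = v' < w$) one checks $C' \ustar \Delta_{s_n}$ lies in the required subcategory. The cone of the second map is $\nabla_{\uw'} \ustar (\text{cone of } f_{s_n}) \simeq \nabla_{\uw'} \ustar (\Delta_\varnothing(-1) \oplus \Delta_\varnothing(1)) \simeq \nabla_{\uw'}(-1) \oplus \nabla_{\uw'}(1)$; the point here is that this object must itself be rewritten in terms of \emph{standard} objects indexed by $v < w$. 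For this I would invoke that $\nabla_{\uw'} \simeq \nabla_{w'}$ admits a filtration (in the appropriate triangulated sense, via the recollement/standard filtration results of~\cite[\S 5--6]{arv}) with subquotients $\Delta_v(m)$, $v \leq w'$; combined with $w' < w$ this places the second cone in the target subcategory as well.

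The main obstacle I anticipate is the bookkeeping in the last step: showing that costandard objects $\nabla_{w'}$ lie in the triangulated subcategory generated by $\Delta_v(m)$ with $v \leq w'$. This is essentially a statement that the standard and costandard objects generate the same triangulated subcategories "below a given element", which should follow formally from the recollement setup of~\cite[\S 5]{arv} (the gluing of $\Delta$'s and $\nabla$'s along the stratification of $W$ by Bruhat order) together with the $n=1$ computation above applied inductively; alternatively one can deduce it from~\cite[Proposition 6.11]{arv} and standard dévissage. Once this is in hand, the octahedral argument closes the induction cleanly, and the estimate $v < w$ for the indices appearing follows from elementary Bruhat-order combinatorics (every $v \leq w'$ satisfies $v < w$ since $w' < w$, and the product with $\Delta_{s_n}$ only produces indices $vs_n$ with $vs_n \leq w$, the equality case being excluded because it would force $v = w'$, handled separately).
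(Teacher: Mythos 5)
The paper gives no written proof for this lemma; it simply records that it ``follows from the construction of standard and costandard objects via the recollement formalism of~\cite[\S 5]{arv}.'' The intended argument is short: writing $I=\{v\in W : v\le w\}$ and $I'=\{v:v<w\}$, the recollement of~\cite[\S 5]{arv} exhibits $\Delta_w$ and $\nabla_w$ as $j_!$ and $j_*$ of the same ``point'' object on the open stratum $\{w\}$, and $f_w$ as the canonical morphism $j_!\to j_*$; applying $j^*$ kills the cone, so the cone lies in the essential image of $i_*$, which is the triangulated subcategory generated by $\Delta_v(n)$ for $v\in I'$.

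Your inductive approach via the factorization $f_w=(\id\ustar f_{s_n})\circ(f_{\uw'}\ustar\id)$ and the octahedral axiom is a genuinely different route and could be made to work, but as written it has three gaps, of which two are real.

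First, a small factual error: the cone of $\Delta_s\to\nabla_s$ is \emph{not} the direct sum $B_\varnothing(-1)\oplus B_\varnothing(1)$. After Gaussian elimination of the contractible piece, what remains is the two-term complex $B_\varnothing(-1)\xrightarrow{\alpha_s} B_\varnothing(1)$ (the composite of the two dots is multiplication by $\alpha_s$, which is nonzero in $R$), a non-split extension of $B_\varnothing(-1)[1]$ by $B_\varnothing(1)$. This does not hurt the base case, since the extension is still built out of shifts of $\Delta_\varnothing$, but the stated splitting is wrong.

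Second, and more seriously, your factorization uses that $f_{\uw'}\ustar f_{s_n}$ is a nonzero multiple of $f_w$. Uniqueness up to scalar from~\cite[Lemma~6.6]{arv} only tells you that a \emph{nonzero} morphism $\Delta_w\to\nabla_w$ is determined up to scalar; it does not tell you that the $\ustar$-product of the $f_{s_i}$ is nonzero. If it were zero, the cone would be $\Delta_w[1]\oplus\nabla_w$ and the lemma would fail for it. Proving nonvanishing essentially requires showing that $j^*$ of this product is an isomorphism on the open stratum, which brings you back to recollement.

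Third, the step you yourself flag --- that $\nabla_{w'}$ lies in the triangulated subcategory generated by $\Delta_v(m)$ with $v\le w'$ --- is exactly the content that needs the recollement (or, equivalently, a separate induction using the present lemma for $v\le w'$, which makes the argument self-referential unless organized carefully). You acknowledge this but leave it as an assertion. Once you invoke recollement to get this sublemma, the direct argument described above is shorter and bypasses the octahedral bookkeeping entirely; there is little reason to then retain the inductive scaffolding. So the verdict is: the plan is salvageable, but it does not yet constitute a proof, and the gaps it leaves are precisely the ones that the recollement formalism resolves directly.
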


\subsection{The right-equivariant category}
\label{ss:RE}

The right-equivariant category is defined in \cite[\S4.3]{amrw1} as
\[
\RE(\fh,W) := \Kb \oDiagBS^\oplus(\fh,W);
\]
the natural functor from $\RE(\fh,W)$ to
$\Kb \oDiag(\fh,W)$ is an equivalence, see~\cite[Lemma 4.9.1]{amrw1}, and we will therefore identify these two categories whenever convenient.
The category $\RE(\fh,W)$ is in a natural way a right module category for the monoidal category $(\BE(\fh,W),\ustar)$; the action bifunctor will also be denoted $\ustar$, it is triangulated on both sides. There exists a natural ``forgetful'' functor
\[
\For^\BE_\RE : \BE(\fh,W) \to \RE(\fh,W)
\]
induced by~\eqref{eqn:Diag-oDiag}; this functor satisfies
\begin{equation}
\label{eqn:For-convolution}
\For^\BE_\RE (\cF \ustar \cG) = \For^\BE_\RE(\cF) \ustar \cG
\end{equation}
for $\cF,\cG$ in $\BE(\fh,W)$. The shifts functors $[1]$, $(1)$ and $\langle1\rangle$, and the bigraded vector spaces $\HHom_{\RE(\fh,W)}(\cF,\cG)$ are defined as for the biequivariant category. Then the functor $\For^\BE_\RE$ commutes with the functors $[1]$, $(1)$ and $\langle1\rangle$ in the obvious way.

A t-structure on $\RE(\fh,W)$ is also constructed in \cite[\S9]{arv}; its heart will be denoted $\fP_{\RE}(\fh,W)$. For this structure, the functors $\langle1\rangle$ and $\For^\BE_\RE$ are t-exact. 
The category $\fP_{\RE}(\fh,W)$ shares many properties with the categories of Bruhat-constructible perverse sheaves on flag varieties of Kac--Moody groups, but ``with an extra grading;'' in particular it has a natural structure of graded highest weight category (in the sense of~\cite[Appendix]{ar}\footnote{Compared to this reference, we make two modifications. First we use the term ``highest weight'' instead of ``quasihereditary.'' Second, we allow our weight poset $\mathcal{S}$ to be infinite, but with the condition that for any $s \in \mathcal{S}$ the set $\{t \in \mathcal{S} \mid t \leq s\}$ is finite; this does not affect the theory, except for the existence of enough projective objects, which is not used here. See e.g.~\cite[\S A]{curso} for the ungraded setting.}) with weight poset $(W,\leq)$ and normalized standard and costandard objects
\[
\oDelta_w:=\For^\BE_\RE(\Delta_w)\quad\mbox{and}\quad\onabla_w=\For^\BE_\RE(\nabla_w)
\]
for $w\in W$, see~\cite[Theorem~9.6]{arv}. (In case $W$ is a the Weyl group of a reductive group, and for an appropriate choice of $\fh$, one can in fact relate explicitly $\fP_{\RE}(\fh,W)$ with the corresponding category of perverse sheaves; see~\cite{ar} for details.)
The restriction of the functor $\For^\BE_\RE$ to the hearts of the perverse t-structures defines a fully faithful functor $\fP_{\BE}(\fh,W) \to \fP_{\RE}(\fh,W)$, see~\cite[Proposition 9.4]{arv}. 
Up to isomorphism, the simple objects in $\fP_{\RE}(\fh,W)$ are the objects
\[
\overline{\rL}_w\langle n\rangle:=\For^\BE_\RE(\rL_w\langle n\rangle)
\]
for $(w,n)\in W\times\Z$.

\subsection{Tilting objects}
\label{ss:tilt}

Since $\fP_{\RE}(\fh,W)$ has a natural structure of graded highest weight category, it makes sense to consider its tilting objects,
i.e.~the objects admitting both a filtration with subquotients 
of the form $\oDelta_w \langle n \rangle$ ($w \in W$, $n \in \Z$)
and a filtration with subquotients 
of the form $\onabla_w \langle n \rangle$ ($w \in W$, $n \in \Z$).
The full subcategory of $\fP_{\RE}(\fh,W)$ whose objects are tilting will be denoted $\Tilt_{\RE}(\fh,W)$. 
The category $\Tilt_{\RE}(\fh,W)$ is Krull--Schmidt, and its isomorphism classes of indecomposable objects is in a natural bijection with $W \times \Z$; for $w \in W$ we will denote by $\oT_w$ the indecomposable object corresponding to $(w,0)$. Then, for any $n \in \Z$, the indecomposable object corresponding to $(w,n)$ is
$\oT_w\langle n\rangle$. (For all of this, see~\cite[Appendix]{ar}.)
It follows from~\cite[Lemma~A.5 and Lemma~A.6]{ar} that the natural functors
\begin{equation}
\label{eq: KTiltRE DPRE RE}
\Kb \Tilt_{\RE}(\fh,W) \to \Db \fP_{\RE}(\fh,W) \to \RE(\fh,W) 
\end{equation}
are equivalences of triangulated categories.

\begin{example}
\label{ex:simple 1}
The simple object in $\fP_\RE(\fh,W)$ corresponding to  the neutral element of $W$ is $\oB_\varnothing$, viewed as a complex concentrated in degree $0$, and it coincides with the object $\oD_1=\onabla_1=\oT_1$.
\end{example}

\begin{example}
Let $s\in S$. The indecomposable tilting object $\oT_s$ in $\Tilt_{\RE}(\fh,W)$ is the complex 
\[
 \cdots 0 \to \oB_\varnothing(-1) \xrightarrow{\begin{tikzpicture}[thick,baseline,xscale=0.07,yscale=-0.07]
      \draw (0,-5) to (0,0);
      \node at (0,0) {$\bullet$};
    \end{tikzpicture}}
 \oB_s \xrightarrow{\begin{tikzpicture}[thick,scale=0.07,baseline]
      \draw (0,-5) to (0,0);
      \node at (0,0) {$\bullet$};
    \end{tikzpicture}} \oB_\varnothing(1) \to 0 \cdots
\]
concentrated in degrees $-1$, $0$ and $1$. 
It fits in exact sequences
\[
0\to\oD_s\to\oT_s\to\oD_1\langle1\rangle\to0\quad\text{and}\quad
0\to\onabla_1\langle-1\rangle\to\oT_s\to\onabla_s\to0.
\]
For this, see~\cite[Example 4.3.4]{amrw1}.
\end{example}

\subsection{The left-equivariant category}
\label{ss:LE}

We now define the left-equivariant category $\LE(\fh,W)$ by
\[
\LE(\fh,W):=\Kb \uDiagBS^\oplus(\fh,W).
\]
Of course, all the constructions and properties of $\RE(\fh,W)$ have analogues for $\LE(\fh,W)$. (In fact, these two categories are equivalent by Remark~\ref{rmk:DBS}\eqref{it:rmk-symmetry-DBS}.) In particular, we will denote by $\For^\BE_{\LE} : \BE(\fh,W) \to \LE(\fh,W)$ the natural ``forgetful" functor (induced by the natural functor $\DiagBS^\oplus(\fh,W) \to \uDiagBS^\oplus(\fh,W)$), and by
\[
\uDelta_w:=\For^\BE_{\LE}(\Delta_w),\quad\text{resp.}\quad\unabla_w=\For^\BE_{\LE}(\nabla_w)
\]
the standard, resp.~costandard, object associated with $w\in W$. The category $\LE(\fh,W)$ admits a natural ``perverse'' t-structure whose heart, denoted $\fP_{\LE}(\fh,W)$, admits a canonical structure of graded highest weight category with weight poset $(W, \leq)$ and normalized standard, resp.~costandard, objects the objects $(\uDelta_w : w \in W)$, resp.~$(\unabla_w : w \in W)$. Its full subcategory of tilting objects will be denoted $\Tilt_{\LE}(\fh,W)$. The indecomposable objects in $\Tilt_{\LE}(\fh,W)$ are in a canonical bijection with $W \times \Z$, and the indecomposable object associated with $(w,0)$ will be 
denoted $\uT_w$ (for $w\in W$).

Recall that $\uDiagBS^\oplus(\fh,W)$ is a left module category for the monoidal category $\DiagBS^\oplus(\fh,W)$. This structure ``extends'' to a bifunctor
\[
 \ustar : \BE(\fh,W) \times \LE(\fh,W) \to \LE(\fh,W)
\]
which defines a left action of the monoidal category $(\BE(\fh,W),\ustar)$ on $\LE(\fh,W)$. This bifunctor is triangulated on both sides, and for $\cF,\cG \in \BE(\fh,W)$ we have a canonical (in particular, bifunctorial) isomorphism
\[
 \For^{\BE}_{\LE}(\cF \ustar \cG) \cong \cF \ustar \For^{\BE}_{\LE}(\cG).
\]

\begin{rmk}
 As in Remark~\ref{rmk:DBS}\eqref{it:rmk-DBS-dual}, below we will also consider all the constructions above for the realization $\fh^*$ instead of $\fh$. We will add a superscript ``$\wedge$'' to all our notations in this case.
\end{rmk}

\subsection{Free-monodromic category}
\label{ss:FM}

We will denote by
\[
\FM(\fh,W)
\]
the ``free-monodromic'' category defined in~\cite[\S5.1]{amrw1}.
It is a $\bk$-linear additive category, whose objects are sequences of objects in $\DiagBS^\oplus(\fh,W)$ endowed with a kind of differential; the precise construction is rather technical, and will not be recalled here. The category $\FM(\fh,W)$ has shift functors $[1]$, $(1)$ and $\langle1\rangle$ commuting with each other and such that $\langle1\rangle=(-1)[1]$. (Note that $[1]$ is a priori not the suspension functor for a triangulated structure on $\FM(\fh,W)$; in fact it is not known at this point whether this category admits a triangulated structure.) As in the biequivariant category (see~\S\ref{ss:BE}), for $\cF,\cG \in \FM(\fh,W)$ we define the bigraded vector space $\HHom_{\FM(\fh,W)}(\cF,\cG)$ by setting
\[
\HHom_{\FM(\fh,W)}(\cF,\cG)_j^i:=\Hom_{\FM(\fh,W)}(\cF,\cG[i]\langle-j\rangle)
\]
and $\EEnd_{\FM(\fh,W)}(\cF) = \HHom_{\FM(\fh,W)}(\cF,\cF)$.
We point out that we have
\begin{equation}
\label{eq:HHom FM and shift}
\HHom_{\FM(\fh,W)}(\cF,\cG\langle1\rangle)=\HHom_{\FM(\fh,W)}(\cF,\cG) \langle 1 \rangle
\end{equation}
for all $\cF, \cG \in \FM(\fh,W)$.

As in~\S\ref{ss:gradings} we consider the graded ring $R^\vee$ as a bigraded ring with nonzero components concentrated in $\{0\} \times \Z$.
Then as explained in~\cite[\S5.2]{amrw1}, for $\cF, \cG \in \FM(\fh,W)$ the bigraded vector space
$\HHom_{\FM(\fh,W)}(\cF,\cG)$ has a natural structure of bigraded $R^\vee$-bimodule.
Both actions are denoted $\wstar$ and are compatible with composition in the sense that $x\wstar (f \circ g) = (x\wstar f) \circ g = f \circ (x\wstar g)$ for $x \in R^\vee$, and similarly for the right action. In particular, the left action is induced by certain bigraded algebra homomorphisms
\[
\mu_{\cF}:R^\vee\to\EEnd_{\FM(\fh,W)}(\cF)
\]
for any object $\cF$; if $f\in\HHom_{\FM(\fh,W)}(\cF,\cG)$ and $x\in R^\vee$, then $x\wstar f=\mu_\cG(x)\circ f=f\circ\mu_\cF(x)$.

It should be the case that $\FM(\fh,W)$ (or an appropriate subcategory) admits a structure of monoidal category. Unfortunately this construction turns out to be delicate, and no general answer is known at present. But at least part of this structure has been constructed in~\cite[Chap.~6--7]{amrw1}. Explicitly, there is a notion of ``convolutive complexes'' in $\FM(\fh,W)$, see~\cite[Definition 6.1.1]{amrw1}, and
the full subcategory of $\FM(\fh,W)$ whose objects are the convolutive complexes is equipped with an operation $\wstar$ such that $\cF\wstar(-)$ and $(-)\wstar\cF$ are functors for any fixed convolutive object $\cF$. Moreover, for a fixed convolutive complex $\cF$ the functor $\cF\wstar(-)$ has an extension to the whole category $\FM(\fh,W)$, see~\cite[Proposition 7.6.3]{amrw1}. The operation $\wstar$ satisfies all the axioms of a monoidal category except possibly for the ``interchange law'' stating that for $f : \cF \to \cG$, $g : \cG \to \cH$, $h : \cF' \to \cG'$ and $k : \cG' \to \cH'$ morphisms between convolutive complexes we have
\[
(g \circ f) \wstar (k \circ h)=
(g \wstar k) \circ (f \wstar h);
\]
see \cite[Chap.~7]{amrw1} for details. In particular:
\begin{itemize}
\item
 the operation $\wstar$ is associative, so that we can omit parentheses when considering multiple instances of $\wstar$, see~\cite[\S 7.2]{amrw1};
 \item
 we have a convolutive object $\wT_\varnothing$ constructed in~\cite[\S 5.3.1]{amrw1} and which behaves like a unit object; see~\cite[\S 7.1]{amrw1}.
 \end{itemize}
 We will return to this question in~\S\ref{ss:bifunctoriality} below.

\subsection{Left-monodromic category}
\label{ss:LM}

We will denote by
\[
\LM(\fh,W)
\]
the ``left monodromic'' category defined in~\cite[\S4.4]{amrw1}.
As for the free-mono\-dromic category, the objects in this category are sequences of objects in $\DiagBS^\oplus(\fh,W)$ endowed with (another kind of) ``differential''. 
There are shift functors $[1]$, $(1)$ and $\langle1\rangle$ with analogous properties to those in $\FM(\fh,W)$, which allows to define bigraded vector spaces $\HHom_{\LM(\fh,W)}(\cF,\cG)$ for $\cF,\cG \in \LM(\fh,W)$ by the same recipe as in $\FM(\fh,W)$. This time, the bigraded space $\HHom_{\LM(\fh,W)}(\cF,\cG)$ has a canonical structure of bigraded left $R^\vee$-module. There exists a functor
\[
\For^\FM_\LM:\FM(\fh,W)\to \LM(\fh,W)
\]
which satisfies
\[
\For^\FM_\LM \circ (1) = (1) \circ \For^\FM_\LM, \quad \For^\FM_\LM \circ [1] = [1] \circ \For^\FM_\LM, \quad \For^\FM_\LM \circ \langle 1 \rangle = \langle 1 \rangle \circ \For^\FM_\LM.
\]
Moreover, $\LM(\fh,W)$ has a natural structure of triangulated category with suspension functor $[1]$. There is also a natural right action of the monoidal category $(\BE(\fh,W),\ustar)$ on $\LM(\fh,W)$; the corresponding bifunctor will again be denoted $\ustar$. (See~\cite[(4.23)]{amrw1} for an explicit construction.)

There is a notion of convolutive objects in $\LM(\fh,W)$ and, for $\cF \in \FM(\fh,W)$ and $\cG \in \LM(\fh,W)$ both convolutive, a convolutive object $\cF \wstar \cG \in \LM(\fh,W)$. There is also a ``convolution'' operation on morphisms, such that for $\cF \in \FM(\fh,W)$ and $\cG \in \LM(\fh,W)$ convolutive the operations $\cF \wstar (-)$ and $(-) \wstar \cG$ are functorial; see~\cite[\S 6.6]{amrw1}.
Moreover, for a fixed convolutive object $\cF \in \FM(\fh,W)$ the functor $\cF \wstar (-)$ extends to a triangulated functor from $\LM(\fh,W)$ to itself, see~\cite[Proposition~7.6.4]{amrw1}.
The functor $\For^\FM_\LM$ sends convolutive complexes to convolutive complexes and satisfies
\begin{equation}\label{eq:wstar and For}
\For^\FM_\LM(\cF\wstar\cG)\simeq\cF\wstar \For^\FM_\LM(\cG)
\end{equation}
for all $\cF,\cG$ in $\FM(\fh,W)$ convolutive, see~\cite[$(6.18)$]{amrw1}. 

There is also an equivalence of triangulated categories
\begin{equation}
\label{eqn:ForLMRE}
\For^\LM_\RE:\LM(\fh,W) \simto \RE(\fh,W)
\end{equation}
given in \cite[Theorem 4.6.2]{amrw1} and we have a functor
\[
\For^\BE_\LM:\BE(\fh,W)\to \LM(\fh,W)
\]
such that $\For^\LM_\RE\circ \For^\BE_\LM=\For^\BE_\RE$, cf. \cite[\S 4.6]{amrw1}. By \cite[Lemma 6.6.1]{amrw1}, for any $\cG\in\BE(\fh,W)$ the object $\For^{\BE}_{\LM}(\cG)$ is convolutive, and for all $\cF\in\FM(\fh,W)$ we have a canonical isomorphism
\[
\cF\wstar \For^\BE_\LM(\cG)\simeq\For^\FM_\LM(\cF)\ustar\cG.
\]
The functors $\For^\LM_\RE$ and $\For^\BE_\LM$ commute with the shift functors in the obvious way.

We can use that $\For^\LM_\RE$ is an equivalence to translate all of the structures and properties of $\RE(\fh,W)$ to $\LM(\fh,W)$. Explicitly, we endow $\LM(\fh,W)$ with the t-structure obtained from that of $\RE(\fh,W)$ (see~\S\ref{ss:RE}) and denote by $\fP_\LM(\fh,W)$ its heart, i.e.~the inverse image of $\fP_\RE(\fh,W)$ under the equivalence $\For^\LM_\RE$. Of course, this category is stable under $\langle1\rangle$ and inherits the graded highest weight structure from $\fP_\RE(\fh,W)$. The normalized standard and costandard objects are
\begin{equation}
\label{eq:standard in LM}
\dDelta_w:=\For^\BE_\LM(\Delta_w)\quad\mbox{and}\quad\dnabla_w:=\For^\BE_\LM(\nabla_w)
\end{equation}
for all $w\in W$, since $\For^\LM_\RE(\For^\BE_\LM(\Delta_w))=\For^\BE_\RE(\Delta_w)=\oDelta_w$ and similarly for the costandard object. The objects 
\[
\cL_w\langle n\rangle=\For^\BE_\LM(\rL_w\langle n\rangle),
\]
$(w,n)\in W\times \Z$, form a family of representatives of the isomorphism classes of simple objects in $\fP_\LM(\fh,W)$. By Example \ref{ex:simple 1}, we have $\cL_1=\dDelta_1=\dnabla_1$.

The following property is useful to extend results from \cite{amrw2} to our more general context.

\begin{prop}\label{prop:L1 Delta w nabla w}
Let $w\in W$.
\begin{enumerate}
\item The socle of $\dDelta_w$ is isomorphic to $\cL_1\langle-\ell(w)\rangle$, and the cokernel of the inclusion $\cL_1\langle-\ell(w)\rangle\hookrightarrow\dDelta_w$ has no composition factor of the form $\cL_1\langle n\rangle$ with $n \in \Z$.
\item The head of $\dnabla_w$ is isomorphic to $\cL_1\langle\ell(w)\rangle$, and the kernel of the surjection $\dnabla_w\twoheadrightarrow\cL_1\langle\ell(w)\rangle$ has no composition factor of the form $\cL_1\langle n\rangle$ with $n \in \Z$.
\end{enumerate}
\end{prop}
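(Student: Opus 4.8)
The plan is to prove part (1) in detail; part (2) then follows by applying the horizontal-flip symmetry of Remark~\ref{rmk:DBS}\eqref{it:rmk-symmetry-DBS} (which exchanges $\uDiagBS$ with $\oDiagBS$, hence exchanges the left-monodromic and right-monodromic pictures, and interchanges $\dDelta_w$ with $\dnabla_{w^{-1}}$ while reversing the roles of socle and head), together with the fact that $\ell(w)=\ell(w^{-1})$. So fix $w \in W$ and let $\uw=(s_1,\dots,s_n)$ be a reduced expression, so $n=\ell(w)$. By~\eqref{eq:ARV Prop 6.11} and~\eqref{eq:standard in LM} we may compute with $\dDelta_w \simeq \For^\BE_\LM(\Delta_\uw)$, where $\Delta_\uw = \Delta_{s_1}\ustar\cdots\ustar\Delta_{s_n}$ is the explicit complex from~\eqref{eq:Delta s}.

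First I would establish the statement for a single simple reflection $s$: here $\dDelta_s = \For^\BE_\LM(\Delta_s)$ is the two-term complex $\uB_s \to \uB_\varnothing(1)$ (apply $\For^\BE_\LM$ termwise to~\eqref{eq:Delta s}). One checks directly in the heart $\fP_\LM(\fh,W)$ that $\dDelta_s$ has a unique simple subobject, namely the image of $\cL_1\langle -1\rangle = \dnabla_1\langle-1\rangle$ under the inclusion coming from the natural map $\dnabla_1\langle-1\rangle \to \dDelta_s$, and that the quotient is $\cL_s$, which by~\cite[\S8.1]{arv} (simples are $\rL_v\langle n\rangle$) involves no shift of $\cL_1$. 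This is the $n=1$ base case. For the inductive step I would use the highest-weight/standard-filtration machinery: convolving with $\dDelta_s$ (equivalently, applying $(-)\ustar \Delta_s$ before the forgetful functor) sends $\dDelta_v$ to an object with a two-step standard filtration with subquotients $\dDelta_{vs}$ and $\dDelta_v\langle1\rangle$ when $vs>v$, and in fact $\dDelta_v \ustar \Delta_s \simeq \dDelta_{vs}$ when $\ell(vs)=\ell(v)+1$ (compatibility of $\ustar$ with $\For^\BE_\LM$ and~\eqref{eq:ARV Prop 6.11}). So $\dDelta_w \simeq \dDelta_{w'}\ustar\Delta_s$ with $w'=s_1\cdots s_{n-1}$, $\ell(w')=n-1$.

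The technical heart will be propagating the socle statement through this convolution. The key input is the exactness/t-exactness properties already recorded: $\langle1\rangle$ and the relevant forgetful functors are t-exact, and $(-)\ustar\Delta_s$ has good properties on the heart (it is right-exact-up-to-shift in a way that mirrors the behaviour of $\pi_{s!}\pi_s^*$ on perverse sheaves on a flag variety; the standard-module estimates are exactly~\cite[\S9]{arv}). I would argue that a composition factor $\cL_1\langle m\rangle$ appearing in $\dDelta_w = \dDelta_{w'}\ustar\Delta_s$ must come from a composition factor $\cL_v\langle m'\rangle$ of $\dDelta_{w'}$ with $v\in\{e,s\}$: indeed $\Delta_s$ convolution can only create subquotients supported on $\{v,vs\}$ from a subquotient supported at $v$, and getting $\cL_1$ out forces $v\in\{e,s\}$. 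By induction the only $\cL_1$-type factor in $\dDelta_{w'}$ is its socle $\cL_1\langle -(n-1)\rangle$ (occurring once); one then computes that $\cL_1\langle-(n-1)\rangle \ustar \Delta_s$ contributes exactly one copy of $\cL_1$, in degree $-n$, sitting in the socle — this is again the $n=1$ computation, shifted — while the $\cL_s$-type factors of $\dDelta_{w'}$ (which carry no $\cL_1\langle\ast\rangle$) convolve to objects still carrying no $\cL_1\langle\ast\rangle$ (here one uses that $\cL_s \ustar \Delta_s$ has no $\cL_1$ factor, a small explicit check). Simplicity of the socle then follows because $\dDelta_w$ is known to be indecomposable with simple socle in any highest weight category of this shape — alternatively, from $\Hom(\cL_1\langle m\rangle, \dDelta_w)$ being at most one-dimensional and nonzero only for $m=-n$, which the multiplicity count gives. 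The main obstacle I anticipate is making the ``convolution with $\Delta_s$ only shuffles composition factors among $v,vs$'' argument rigorous in $\fP_\LM(\fh,W)$ rather than geometrically: this should be extracted from the recollement formalism of~\cite[\S5]{arv} and Lemma~\ref{lem:cone-D-N}, controlling which standards can appear after convolution, but the bookkeeping of gradings (to pin down that the new $\cL_1$ lands precisely in degree $-\ell(w)$ and nowhere else) is where care is needed.
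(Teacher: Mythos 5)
Your proposal does not follow the paper's route, and the route you propose has a genuine gap in its inductive step.

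The paper's proof is a two-line reduction: by~\cite[Proposition~8.3]{arv} the analogous statement already holds for $\rL_1$, $\Delta_w$, $\nabla_w$ in $\fP_{\BE}(\fh,W)$; since $\For^\BE_\RE : \fP_\BE(\fh,W) \to \fP_\RE(\fh,W)$ is fully faithful, exact, and sends simples to simples, the socle/head statements transfer, and so do the statements about composition factors of the cokernel/kernel. You do not use this available input and instead attempt to reprove the result from scratch by induction on $\ell(w)$, essentially reproving the $\BE$-level statement.

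The gap in your inductive step is the claim that you can track $\cL_1\langle n\rangle$-type composition factors of $\dDelta_{w'} \ustar \Delta_s$ by looking at the composition factors of $\dDelta_{w'}$ one at a time. This would require $(-) \ustar \Delta_s$ to be t-exact with respect to the perverse t-structure, which is not the case (only one-sided t-exactness holds; the functor is exact on the subcategory of objects admitting a standard filtration, but simples such as $\cL_s$ need not have one, so $\cL_s \ustar \Delta_s$ is a genuine complex). Moreover the ``small explicit check'' you invoke --- that $\cL_s \ustar \Delta_s$ contributes no $\cL_1$-type factor --- is suspect. Applying $(-)\ustar\Delta_s$ to the short exact sequence $0 \to \cL_s \to \dnabla_s \to \cL_1\langle 1\rangle \to 0$ and using $\dnabla_s \ustar \Delta_s \cong \cT_1 = \cL_1$ and $\cL_1\langle 1\rangle \ustar \Delta_s \cong \dDelta_s\langle 1\rangle$ gives a triangle $\cL_s\ustar\Delta_s \to \cL_1 \to \dDelta_s\langle 1\rangle \xrightarrow{[1]}$, so the perverse cohomology objects of $\cL_s\ustar\Delta_s$ do involve $\cL_1$-type factors. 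Your bookkeeping therefore does not close up as stated. If you wish to keep an induction rather than cite~\cite[Proposition~8.3]{arv}, the natural route is via $\Hom$-spaces: using the recollement and the vanishing of $\Hom(\cL_v\langle n \rangle, \dDelta_w)$ for $v \neq 1$, together with the standard filtrations, rather than trying to push composition factors through a non-exact convolution functor.
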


\begin{proof}
The objects $\rL_1$, $\Delta_w$ and $\nabla_w$ satisfy an analogous statement in $\BE(\fh,W)$, see \cite[Proposition 8.3]{arv}. Since the functor $\For^{\BE}_{\RE} : \fP_\BE(\fh,W) \to \fP_\RE(\fh,W)$ is fully faithful, and since its essential image contains all simple objects, we deduce that the socle of $\dDelta_w$ is isomorphic to $\cL_1\langle-\ell(w)\rangle$ and the head of $\dnabla_w$ is isomorphic to $\cL_1\langle\ell(w)\rangle$. The other claims also follow, since $\For^\BE_\RE$ is exact and sends simple objects to simple objects.
\end{proof}

\subsection{Left-monodromic tilting category}\label{ss:LMT}

The following results were established for Cartan realizations of crystallographic Coxeter groups in~\cite[Chap.~10]{amrw1}. Using the theory developed in~\cite{arv}, we can extend them to our present setting, with identical proofs.

For $s \in S$, recall the object
$\wT_s \in \FM(\fh,W)$ defined in \cite[\S 5.3.2]{amrw1}; this object is convolutive by definition. We therefore have a triangulated functor $\wT_s\wstar(-):\LM(\fh,W)\to \LM(\fh,W)$, see~\S\ref{ss:LM}.

\begin{lema}
\label{le:tensoring by wT s}
Let $w\in W$ and $s\in S$. 
\begin{enumerate}
 \item 
 \label{it:tensor-Ts-1}
 If $sw>w$, we have distinguished triangles
 \[
\dDelta_{sw}\to \wT_s\wstar\dDelta_{w}\to \dDelta_{w}\langle1\rangle\xrightarrow{[1]}
 \quad\mbox{and}\quad\dnabla_{w}\langle-1\rangle\to\wT_s\wstar\dnabla_{w}\to \dnabla_{sw}\xrightarrow{[1]}
 \]
 in $\LM(\fh,W)$, where in each case the second morphism is nonzero.
\item 
\label{it:tensor-Ts-2}
If $sw<w$, we have distinguished triangles
\[
 \dDelta_{w}\langle-1\rangle\to \wT_s\wstar\dDelta_{w}\to \dDelta_{sw}\xrightarrow{[1]} \quad \mbox{and}\quad\dnabla_{sw}\to\wT_s\wstar\dnabla_{w}\to \dnabla_{w}\langle1\rangle\xrightarrow{[1]}
 \]
 in $\LM(\fh,W)$, where in each case the second morphism is nonzero.
\end{enumerate}
\end{lema}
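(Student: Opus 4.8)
The plan is to reduce the statement to a computation with the $\wstar$-action on standard and costandard objects attached to the single simple reflection $s$, and then transport it along the equivalences relating $\FM$, $\LM$ and $\RE$. First I would record the ``base case'' $w=e$ (and more generally $\ell(sw)>\ell(w)$, $w=s$): by the very definition of $\wT_s$ in \cite[\S 5.3.2]{amrw1} there is a distinguished triangle in $\FM(\fh,W)$ relating $\wT_s$ to $\wT_\varnothing$ and $\wT_\varnothing\langle\pm1\rangle$ (the ``free-monodromic standard/costandard filtration'' of $\wT_s$), and applying $\For^\FM_\LM$ together with \eqref{eq:wstar and For} and the compatibility $\For^\FM_\LM(\wT_\varnothing)=\dDelta_e=\dnabla_e$ yields the case $w=e$ directly. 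The key structural input is the identification $\wT_s\wstar\dDelta_w\cong\For^\FM_\LM(\wT_s)\ustar\Delta_w$-type formula; more precisely, using the canonical isomorphism $\cF\wstar\For^\BE_\LM(\cG)\cong\For^\FM_\LM(\cF)\ustar\cG$ from \S\ref{ss:LM} together with \eqref{eq:standard in LM} (so that $\dDelta_w=\For^\BE_\LM(\Delta_w)$), we get $\wT_s\wstar\dDelta_w\cong\For^\FM_\LM(\wT_s)\ustar\Delta_w$ in $\LM(\fh,W)$, and likewise for $\dnabla_w$.

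Second, I would establish the analogous statement one level down, in $\BE(\fh,W)$ or directly in $\LM(\fh,W)$, for the object $\For^\FM_\LM(\wT_s)$ convolved with $\Delta_w$ and $\nabla_w$. The point is that $\For^\FM_\LM(\wT_s)$ carries both a two-step $\dDelta$-filtration (with subquotients $\dDelta_s$ and $\dDelta_e\langle1\rangle$, or equivalently via the triangle $\dDelta_s\to\For^\FM_\LM(\wT_s)\to\dDelta_e\langle1\rangle\xrightarrow{[1]}$) and a two-step $\dnabla$-filtration ($\dnabla_e\langle-1\rangle\to\For^\FM_\LM(\wT_s)\to\dnabla_s\xrightarrow{[1]}$); these are exactly the free-monodromic analogues of the exact sequences for $\oT_s$ recalled in the Example in \S\ref{ss:tilt}, pushed down along $\For$. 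Convolving these triangles on the right with $\Delta_w$ (resp.\ $\nabla_w$) — using that $\ustar$ is triangulated on both sides — gives distinguished triangles whose outer terms are $\Delta_s\ustar\Delta_w$, $\Delta_w\langle1\rangle$ (resp.\ $\nabla_w\langle-1\rangle$, $\nabla_s\ustar\nabla_w$). Now invoke the standard multiplicativity of standard/costandard objects: $\Delta_s\ustar\Delta_w\cong\Delta_{sw}$ when $sw>w$ and $\Delta_s\ustar\Delta_w\cong\Delta_w\langle-1\rangle\oplus\cdots$ — more precisely, when $sw<w$ one has instead the triangle $\Delta_w\langle-1\rangle\to\Delta_s\ustar\Delta_w\to\Delta_{sw}\xrightarrow{[1]}$ coming from $\Delta_s\ustar\nabla_s\cong\Delta_e$ and $\Delta_s\ustar\Delta_s$'s filtration. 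This is where the two cases $sw>w$ and $sw<w$ genuinely diverge, and where \eqref{eq:ARV Prop 6.11} and Lemma~\ref{lem:cone-D-N} enter to control the relevant cones.

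Third, combine the two filtrations. When $sw>w$ we have $\For^\FM_\LM(\wT_s)\ustar\Delta_w$ sitting in a triangle with $\Delta_s\ustar\Delta_w\cong\Delta_{sw}$ at one end and $\Delta_w\langle1\rangle$ at the other; pushing forward via $\For^\BE_\LM$ and using $\dDelta_w=\For^\BE_\LM(\Delta_w)$ gives exactly the first triangle of \eqref{it:tensor-Ts-1}. The costandard case and the case $sw<w$ are obtained identically, swapping the roles of the two filtrations (for $sw<w$, the $\dDelta$-side gives $\dDelta_w\langle-1\rangle$ first and $\dDelta_{sw}$ second). The nonvanishing of the second morphism in each triangle is the one ``soft'' point requiring an argument: it follows because if it were zero the triangle would split, forcing $\wT_s\wstar\dDelta_w$ to be a direct sum of two standard objects, which contradicts either indecomposability of the relevant free-monodromic tilting object or a $\Hom$-computation $\Hom(\dDelta_{sw},\dDelta_w\langle1\rangle)$ in the highest weight category $\fP_\LM(\fh,W)$ (using that $\dDelta$'s have no higher self-extensions in the appropriate direction and the socle/head computation of Proposition~\ref{prop:L1 Delta w nabla w}).

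The main obstacle I anticipate is the bookkeeping around the partially-defined monoidal structure $\wstar$: one must be careful that all objects appearing ($\wT_s$, $\dDelta_w$, $\dnabla_w$, and their convolutions) are \emph{convolutive} so that the functoriality statements from \cite[Chap.~6--7]{amrw1} apply, and that the associativity and unit isomorphisms used are the ones actually constructed there (as flagged in \S\ref{ss:FM}, the interchange law is \emph{not} available in general, so any step that implicitly uses it must be rerouted through the one-sided functoriality of $\cF\wstar(-)$ and $(-)\wstar\cG$). Concretely, the safe route is to do all convolutions on the \emph{right} by objects of $\BE(\fh,W)$ (which are convolutive by \cite[Lemma 6.6.1]{amrw1}) via the action $\ustar$, and only at the very end translate back to $\wstar$ with $\dDelta_w=\For^\BE_\LM(\Delta_w)$; this is precisely the strategy of \cite[Chap.~10]{amrw1}, and since \cite{arv} now supplies the highest weight structure and the objects $\Delta_w,\nabla_w$ for general realizations, the proof there goes through verbatim.
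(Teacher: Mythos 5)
Your proposal follows essentially the same route as the paper: the paper's proof simply says that the existence of the triangles is obtained ``as in \cite[Lemma~10.5.3]{amrw1}, citing \cite[Proposition~6.11]{arv} instead of \cite[Proposition~4.4]{ar} when tensoring with (co)standard objects,'' and your sketch of how \cite[Chap.~10]{amrw1} produces the triangles (via the two filtrations of $\wT_s$, convolving on the right through $\ustar$ against $\Delta_w,\nabla_w$ in $\BE(\fh,W)$, then pushing down by $\For^\BE_\LM$) is a fair unpacking of that citation. You also correctly flag the need to route around the unproved interchange law by working with the one-sided functoriality of $\ustar$, which is indeed how \cite{amrw1} sets things up.

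The one imprecision worth correcting is in the nonvanishing argument. If the second morphism $g\colon \wT_s\wstar\dDelta_w\to\dDelta_w\langle1\rangle$ vanishes, the conclusion is \emph{not} that $\wT_s\wstar\dDelta_w$ decomposes as a direct sum of two standard objects (that would follow from the \emph{connecting} morphism vanishing). Rather, $g=0$ makes the first morphism a split epimorphism, giving $\dDelta_{sw}\cong(\wT_s\wstar\dDelta_w)\oplus\dDelta_w\langle1\rangle[-1]$; the contradiction then comes from $\Hom_{\LM(\fh,W)}(\dDelta_{sw},\dDelta_w\langle1\rangle[-1])=0$, which holds simply because both $\dDelta_{sw}$ and $\dDelta_w\langle1\rangle$ lie in the heart $\fP_\LM(\fh,W)$ (negative $\Ext$'s vanish for a t-structure). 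Your invocation of $\Hom(\dDelta_{sw},\dDelta_w\langle1\rangle)$ (without the $[-1]$), of ``no higher self-extensions,'' and of indecomposability of a free-monodromic tilting object are all in the right neighbourhood but are not the step that closes the argument; the paper's version is the clean one, and it is what you should use.
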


\begin{proof}
The existence of the distinguished triangles can be obtained as in~\cite[Lemma 10.5.3]{amrw1}, citing~\cite[Proposition 6.11]{arv} instead of \cite[Proposition 4.4]{ar} when tensoring with (co)standard objects. We prove that the second morphism in the first triangle in~\eqref{it:tensor-Ts-1} is nonzero; the other cases are similar. Assume for a contradiction that this is not the case; then we have $\dDelta_{sw} \cong \wT_s\wstar\dDelta_{w} \oplus \dDelta_{w} \langle 1 \rangle [-1]$. This contradicts the fact that $\Hom(\dDelta_{sw}, \dDelta_{w} \langle 1 \rangle [-1])=0$, which follows e.g.~from the fact that both $\dDelta_{sw}$ and $\dDelta_{w} \langle 1 \rangle$ belong to $\fP_\LM(\fh,W)$.
\end{proof}

\begin{lema}
\label{le:tensoring by wT s is exact}
The triangulated functor $\wT_s\wstar(-):\LM(\fh,W)\to \LM(\fh,W)$ is t-exact with respect to the perverse t-structure. 
\end{lema}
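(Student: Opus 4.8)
The plan is to deduce t-exactness from the behavior of $\wT_s \wstar (-)$ on standard and costandard objects established in Lemma~\ref{le:tensoring by wT s}, together with the characterization of the perverse t-structure on $\LM(\fh,W)$ via (co)standard objects. First I would recall that, since $\fP_\LM(\fh,W)$ is a graded highest weight category with standard objects $\dDelta_w\langle n\rangle$ and costandard objects $\dnabla_w\langle n\rangle$, the nonpositive part $\LM(\fh,W)^{\leq 0}$ of the t-structure is generated under extensions and positive cohomological shifts $[k]$ ($k\geq 0$) by the objects $\dDelta_w\langle n\rangle$, and dually $\LM(\fh,W)^{\geq 0}$ is generated under extensions and shifts $[-k]$ ($k\geq 0$) by the objects $\dnabla_w\langle n\rangle$. (This is the standard description of the t-structure attached to an exceptional/quasi-hereditary collection; it is exactly how the t-structure of \cite{arv} is built, so I would cite the relevant statement there.)

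Next I would observe that $\wT_s\wstar(-)$ is a triangulated endofunctor of $\LM(\fh,W)$ (by the construction recalled in~\S\ref{ss:LM}), that it commutes with $\langle 1\rangle$, and that it commutes with the cohomological shift $[1]$ since it is triangulated. Hence to show $\wT_s\wstar(-)$ preserves $\LM(\fh,W)^{\leq 0}$ it suffices to check that $\wT_s\wstar \dDelta_w$ lies in $\LM(\fh,W)^{\leq 0}$ for every $w\in W$, and symmetrically that $\wT_s\wstar\dnabla_w$ lies in $\LM(\fh,W)^{\geq 0}$ for every $w$. But Lemma~\ref{le:tensoring by wT s} gives, in either case $sw>w$ or $sw<w$, a distinguished triangle exhibiting $\wT_s\wstar\dDelta_w$ as an extension of two objects of the form $\dDelta_v\langle m\rangle$ (with $m\in\{-1,0,1\}$ and possibly a shift $[-1]$, i.e.\ a $[-1]=[1]^{-1}$ appearing only in degrees that keep us in $\LM^{\leq0}$); concretely both outer terms of each triangle lie in the heart $\fP_\LM(\fh,W)$, so the middle term lies in $\LM(\fh,W)^{\leq 0}$. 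The dual computation with $\dnabla_w$ shows $\wT_s\wstar\dnabla_w\in\fP_\LM(\fh,W)^{\geq 0}$, in fact again an extension of two objects of the heart. Since $\wT_s\wstar(-)$ then preserves both $\LM(\fh,W)^{\leq 0}$ and $\LM(\fh,W)^{\geq 0}$, it is t-exact.

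I should be slightly careful about one point: to run the "generation" argument I need that $\wT_s\wstar(-)$ commutes with arbitrary (possibly infinite) extensions and with the relevant colimits used to generate $\LM(\fh,W)^{\leq 0}$ from the standard objects. In the bounded/finite-support setting of \cite{arv} this is not an issue — every object of $\LM(\fh,W)$ is built from finitely many (co)standard objects by finitely many triangles, because the perverse heart is a finite-length category and $\LM(\fh,W)\simeq \Db\fP_\LM(\fh,W)$-type statements hold (cf. the analogue of~\eqref{eq: KTiltRE DPRE RE}) — so a straightforward dévissage on the number of triangles suffices, and a triangulated functor automatically carries distinguished triangles to distinguished triangles. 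The main (and really only) obstacle is thus the bookkeeping in the dévissage: making sure the shifts $\langle \pm 1\rangle$ and the occasional $[-1]$ appearing in the triangles of Lemma~\ref{le:tensoring by wT s} do not push anything out of $\LM^{\leq 0}$ or $\LM^{\geq 0}$. Since $\langle 1\rangle$ is t-exact (it stabilizes graded vector spaces, as recalled in~\S\ref{ss:gradings}, and is t-exact on $\RE$ hence on $\LM$) and the $[-1]$ shift in part~\eqref{it:tensor-Ts-1} is applied to an object $\dDelta_w\langle1\rangle$ that already lies in the heart — so that $\dDelta_w\langle1\rangle[-1]\in\LM^{\geq1}\subset\LM^{\geq0}$, which is harmless when checking membership in $\LM^{\leq0}$ because the relevant outer term for that test is $\dDelta_{sw}\in\fP_\LM$ — everything stays in range. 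Thus the proof reduces to quoting Lemma~\ref{le:tensoring by wT s}, the t-exactness of $\langle1\rangle$, and the (co)standard generation of the t-structure, with no further computation needed.
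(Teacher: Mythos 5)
Your proposal is correct and follows essentially the same route as the paper: both reduce t-exactness to the fact that the nonpositive (resp.\ nonnegative) part of the t-structure is generated under extensions by shifts of standard (resp.\ costandard) objects (this is the statement in \cite[Lemma~7.5 and Remark~7.6]{arv} the paper cites), and then apply Lemma~\ref{le:tensoring by wT s}. One small simplification you could make: in every triangle of Lemma~\ref{le:tensoring by wT s} both outer terms already lie in the heart $\fP_\LM(\fh,W)$, so $\wT_s\wstar\dDelta_w$ and $\wT_s\wstar\dnabla_w$ are in the heart itself and the cautious bookkeeping about $[-1]$ shifts is unnecessary.
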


\begin{proof}
By \cite[Lemma~7.5 and Remark~7.6]{arv}, the nonpositive and nonnegative parts of the t-structure are generated under extensions by appropriate shifts of standard and costandard objects, respectively. Thus, the claim follows from Lemma~\ref{le:tensoring by wT s}. 
\end{proof}

We set $\cT_1:=\For^{\BE}_{\LM}(B_\varnothing)$ and,
given an expression $\uw=(s_1, \dots, s_r)$, we set
\[
\cT_\uw:=\wT_{s_1}\wstar\cdots\wstar\wT_{s_r}\wstar\cT_1\in\LM(\fh,W).
\]
Let $\Tilt_\LM^\oplus(\fh,W)$ be the full subcategory of $\LM(\fh,W)$ whose objects are the direct sums of objects
of the form $\cT_{\uw}\langle n\rangle$ with $\uw \in \Exp(W)$ and $n \in \Z$. We define the left-monodromic tilting category $\Tilt_\LM(\fh,W)$ as its karoubian envelope. (Note that $\Tilt_\LM(\fh,W)$ is a full subcategory of $\LM(\fh,W)$ as the latter has a bounded t-structure and hence is karoubian by the main result of~\cite{lc}.) This definition is justified by the following result.

\begin{prop}
\label{prop:TiltLM}
The functor $\For^\LM_\RE$ induces an equivalence of additive categories
\[
\Tilt_\LM(\fh,W) \simto \Tilt_\RE(\fh,W).
\]
As a consequence, the natural functors
\begin{equation}
\label{eq: KTiltLM DPLM LM}
\Kb \Tilt_{\LM}(\fh,W) \to \Db \fP_{\LM}(\fh,W) \to \LM(\fh,W)
\end{equation}
are equivalences of triangulated categories.
\end{prop}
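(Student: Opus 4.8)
The plan is to first establish that $\For^\LM_\RE$ restricts to a functor $\Tilt_\LM^\oplus(\fh,W) \to \Tilt_\RE^\oplus(\fh,W)$ which is an equivalence of additive categories, and then to pass to karoubian envelopes and invoke the tilting formalism to deduce the second assertion. The starting point is that $\For^\LM_\RE$ is an equivalence of triangulated categories by~\eqref{eqn:ForLMRE}, so the entire content is to identify the essential image of the generating objects $\cT_\uw\langle n\rangle$. By definition $\cT_\uw = \wT_{s_1}\wstar\cdots\wstar\wT_{s_r}\wstar\cT_1$ with $\cT_1 = \For^\BE_\LM(B_\varnothing)$; applying $\For^\LM_\RE$ and using the compatibility of $\For^\LM_\RE$ with $\wstar$ together with the relation $\cF\wstar\For^\BE_\LM(\cG)\simeq\For^\FM_\LM(\cF)\ustar\cG$ from~\S\ref{ss:LM} (and $\For^\LM_\RE\circ\For^\BE_\LM = \For^\BE_\RE$), one reduces the computation of $\For^\LM_\RE(\cT_\uw)$ to iterated convolutions $\For^\FM_\LM(\wT_{s_1})\ustar\cdots\ustar\For^\FM_\LM(\wT_{s_r})\ustar\For^\BE_\RE(B_\varnothing)$ inside $\RE(\fh,W)$, i.e.\ to the object obtained by applying $\wT_{s_1}\wstar(-),\dots,\wT_{s_r}\wstar(-)$ to $\oB_\varnothing$.

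The key step is then to show that this object is an indecomposable-summand-free-of-nothing, i.e.\ is a \emph{tilting} object in $\fP_\RE(\fh,W)$, and that the $\cT_\uw\langle n\rangle$ exhaust (up to isomorphism) all objects of $\Tilt_\RE^\oplus(\fh,W)$. For the tilting property: Lemma~\ref{le:tensoring by wT s is exact} shows $\wT_s\wstar(-)$ is t-exact on $\LM(\fh,W)$, hence (transported via $\For^\LM_\RE$) defines a t-exact endofunctor of $\RE(\fh,W)$; and Lemma~\ref{le:tensoring by wT s} shows it sends standard objects to objects with a standard filtration and costandard objects to objects with a costandard filtration. Since $\oB_\varnothing = \oDelta_1 = \onabla_1$ is simultaneously standard and costandard (Example~\ref{ex:simple 1}), an immediate induction on $r$ shows that $\For^\LM_\RE(\cT_\uw)$ lies in the heart $\fP_\RE(\fh,W)$ and admits both a $\oDelta$-filtration and a $\onabla$-filtration, i.e.\ is tilting. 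To see that all indecomposable tilting objects $\oT_w$ arise as summands: taking $\uw$ a reduced expression for $w$, the triangles of Lemma~\ref{le:tensoring by wT s} (using~\eqref{eq:ARV Prop 6.11}) give that $\For^\LM_\RE(\cT_\uw)$ has a $\oDelta$-filtration with $\oDelta_w$ occurring once (with multiplicity one, in the ``top'' graded piece) and all other subquotients of the form $\oDelta_v\langle n\rangle$ with $v<w$; by the standard argument in highest weight categories (cf.~\cite[Appendix]{ar}) this forces $\oT_w$ to be a direct summand of $\For^\LM_\RE(\cT_\uw)$, and a downward induction on the Bruhat order shows every $\For^\LM_\RE(\cT_\uw)$ decomposes into indecomposable tiltings. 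Hence $\For^\LM_\RE$ carries $\Tilt_\LM^\oplus(\fh,W)$ into $\Tilt_\RE(\fh,W)$ with image containing all indecomposables and closed under direct sums; since $\For^\LM_\RE$ is fully faithful, passing to karoubian envelopes gives the asserted equivalence $\Tilt_\LM(\fh,W)\simto\Tilt_\RE(\fh,W)$.

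For the ``as a consequence'' part: by~\eqref{eq: KTiltRE DPRE RE} the natural functors $\Kb\Tilt_\RE(\fh,W)\to\Db\fP_\RE(\fh,W)\to\RE(\fh,W)$ are equivalences. Transporting through the equivalence $\For^\LM_\RE$ of~\eqref{eqn:ForLMRE}, which is t-exact by~\S\ref{ss:LM} and hence induces an equivalence $\Db\fP_\LM(\fh,W)\simto\Db\fP_\RE(\fh,W)$, and using the equivalence $\Tilt_\LM(\fh,W)\simto\Tilt_\RE(\fh,W)$ just established, we obtain that the composite in~\eqref{eq: KTiltLM DPLM LM} is an equivalence; checking that the individual arrows are the natural ones (i.e.\ that the relevant squares commute) is routine from the construction of the functors. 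The main obstacle is the verification that $\For^\LM_\RE(\cT_\uw)$ is a tilting \emph{perverse} object rather than merely a complex with nice filtrations — this is exactly where t-exactness of $\wT_s\wstar(-)$ (Lemma~\ref{le:tensoring by wT s is exact}) is essential, and where one must be careful that the convolutions are computed in the monodromic setting, using the compatibility isomorphisms with $\For^\FM_\LM$ and $\For^\BE_\LM$ rather than naively inside $\RE(\fh,W)$; once this bookkeeping is in place the rest is the standard highest-weight-category argument.
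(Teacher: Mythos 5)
Your plan is correct and follows essentially the same route as the paper, which simply cites the proof of \cite[Proposition~10.5.1]{amrw1} with Lemma~\ref{le:tensoring by wT s} substituted for its analogue there: t-exactness of $\wT_s\wstar(-)$ (Lemma~\ref{le:tensoring by wT s is exact}) plus the distinguished triangles of Lemma~\ref{le:tensoring by wT s} give by induction that $\cT_\uw$ is a tilting object of $\fP_\LM(\fh,W)$, the multiplicity-one occurrence of $\dDelta_w$ for $\uw$ reduced yields all indecomposable summands $\cT_w$, and the second claim is transported from~\eqref{eq: KTiltRE DPRE RE} through the t-exact equivalence $\For^\LM_\RE$. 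One small caveat: your opening ``reduction to iterated convolutions $\For^\FM_\LM(\wT_{s_1})\ustar\cdots\ustar\For^\BE_\RE(B_\varnothing)$ inside $\RE(\fh,W)$'' does not typecheck (the isomorphism $\cF\wstar\For^\BE_\LM(\cG)\simeq\For^\FM_\LM(\cF)\ustar\cG$ only applies when the right factor is biequivariant, so it cannot be iterated this way), but this aside is not used in the actual argument, which stays correctly in $\LM(\fh,W)$ via $\wT_s\wstar(-)$ and transfers at the end.
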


\begin{proof}
For the first claim, the proof of \cite[Proposition 10.5.1]{amrw1} applies in the present setting, using Lemma~\ref{le:tensoring by wT s} instead of~\cite[Lemma~10.5.3]{amrw1}. Then, the fact that the functors in~\eqref{eq: KTiltLM DPLM LM} are equivalences follows from the similar property for the functors in~\eqref{eq: KTiltRE DPRE RE}.
\end{proof}

\begin{rmk}
 Standard arguments show that the obvious functor
 \[
  \Kb \Tilt_{\LM}(\fh,W) \to \Kb \Tilt_{\LM}^\oplus(\fh,W)
 \]
is an equivalence of triangulated categories. Hence we also obtain an equivalence of categories
\begin{equation}
\label{eq:KTiltLM+-LM}
\Kb \Tilt_{\LM}^\oplus(\fh,W) \simto \LM(\fh,W),
\end{equation}
as in~\cite[Lemma~2.4]{amrw2}.
\end{rmk}

Using Proposition~\ref{prop:TiltLM} we can transfer to $\Tilt_\LM(\fh,W)$ the usual properties satisfied by the tilting objects of a graded highest weight category, cf.~\cite[\S 9.5]{arv}, and obtain the following statement.

\begin{cor}
\label{cor:Tilt LM indecomposable}
The category $\Tilt_\LM(\fh,W)$ is Krull--Schmidt. For any $w\in W$, there exists a unique (up to isomorphism) indecomposable object $\cT_w$ characterized by the following properties:
\begin{enumerate}
\item for any reduced expression $\uw$ expressing $w$, $\cT_w$ occurs as a direct summand of $\cT_{\uw}$ with multiplicity $1$;
\item $\cT_w$ does not occur as a direct summand of any object $\cT_\uv \langle n \rangle$ with $\uv$ an expression such that $\ell(\uv)<\ell(w)$ and $n \in \Z$.
\end{enumerate}
Moreover, the assignment $(w,n)\mapsto\cT_w\langle n\rangle$ induces a bijection bewteen $W\times\Z$ and the set of isomorphism classes of indecomposable objects in $\Tilt_\LM(\fh,W)$. \qed
\end{cor}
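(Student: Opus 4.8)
The plan is to deduce everything from the equivalence of additive categories $\For^\LM_\RE \colon \Tilt_\LM(\fh,W) \simto \Tilt_\RE(\fh,W)$ of Proposition~\ref{prop:TiltLM}, together with the structure of $\Tilt_\RE(\fh,W)$ recalled in~\S\ref{ss:tilt}. Since $\Tilt_\RE(\fh,W)$ is Krull--Schmidt, so is the equivalent category $\Tilt_\LM(\fh,W)$. As $\For^\LM_\RE$ commutes with $\langle1\rangle$, it carries the isomorphism classes of indecomposable objects of $\Tilt_\LM(\fh,W)$ bijectively onto those of $\Tilt_\RE(\fh,W)$, which are exactly the classes of the objects $\oT_w\langle n\rangle$ for $(w,n)\in W\times\Z$. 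Setting $\cT_w := (\For^\LM_\RE)^{-1}(\oT_w)$, we obtain that $(w,n)\mapsto\cT_w\langle n\rangle$ induces a bijection between $W\times\Z$ and the isomorphism classes of indecomposable objects of $\Tilt_\LM(\fh,W)$, and that $\For^\LM_\RE(\cT_w\langle n\rangle)\cong\oT_w\langle n\rangle$ for all $n\in\Z$.

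It then remains to identify $\cT_w$ by the two listed properties. First I would check that for any expression $\uw$ the object $\cT_\uw$ lies in $\fP_\LM(\fh,W)$ and is tilting there. Indeed $\cT_1 = \dDelta_1 = \dnabla_1$ lies in $\fP_\LM(\fh,W)$ by Example~\ref{ex:simple 1}, and each functor $\wT_s\wstar(-)$ is t-exact by Lemma~\ref{le:tensoring by wT s is exact}, so an immediate induction gives $\cT_\uw\in\fP_\LM(\fh,W)$. Moreover, by Lemma~\ref{le:tensoring by wT s} the (triangulated, t-exact, hence exact on $\fP_\LM(\fh,W)$) functor $\wT_s\wstar(-)$ sends an object admitting a standard, resp.\ costandard, filtration to another such object; iterating from $\cT_1$, the object $\cT_\uw$ admits both a standard and a costandard filtration, i.e.\ is tilting. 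Applying $\For^\LM_\RE$, which is t-exact for the t-structure of $\LM(\fh,W)$ (defined from that of $\RE(\fh,W)$) and sends $\dDelta_v\langle n\rangle$, resp.\ $\dnabla_v\langle n\rangle$, to $\oDelta_v\langle n\rangle$, resp.\ $\onabla_v\langle n\rangle$ (see~\S\ref{ss:LM}), we conclude that $\For^\LM_\RE(\cT_\uw)$ is tilting in $\fP_\RE(\fh,W)$, with standard and costandard filtrations induced by those of $\cT_\uw$.

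Next I would perform the usual Bott--Samelson bookkeeping along the triangles of Lemma~\ref{le:tensoring by wT s}: an induction on $\ell(\uv)$ shows that for any expression $\uv$ every subquotient of a standard filtration of $\cT_\uv$ has the form $\dDelta_v\langle n\rangle$ with $\ell(v)\le\ell(\uv)$, and that when $\uv$ is a reduced expression for $w$ the subquotient $\dDelta_w$ occurs exactly once while all others have $\ell(v)<\ell(w)$. Transporting this through $\For^\LM_\RE$ and recalling (see~\cite[Appendix]{ar} and~\cite[\S9.5]{arv}) that $\oT_w$ is the unique indecomposable tilting object of $\fP_\RE(\fh,W)$ whose standard filtration involves $\oDelta_w$ and, apart from that, only objects $\oDelta_v\langle n\rangle$ with $v<w$ (and $v<w$ forces $\ell(v)<\ell(w)$), we deduce that $\oT_w$, hence $\cT_w$, occurs in $\cT_\uw$ with multiplicity $1$ whenever $\uw$ is reduced for $w$; this is property~(1). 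If $\cT_w$ were a direct summand of some $\cT_\uv\langle n\rangle$ with $\ell(\uv)<\ell(w)$, then $\oDelta_w\langle n'\rangle$ would appear in a standard filtration of $\For^\LM_\RE(\cT_\uv)$, forcing $\ell(w)\le\ell(\uv)<\ell(w)$; this is property~(2). For uniqueness: by property~(1) and Krull--Schmidt, $\cT_w$ is a summand of $\cT_\uw$, and every other indecomposable summand of $\cT_\uw$ has the form $\cT_v\langle n\rangle$ with $\ell(v)<\ell(w)$ by the flag computation; such a summand occurs in $\cT_\uv\langle n\rangle$ for $\uv$ a reduced expression of $v$, of length $<\ell(w)$, so it fails property~(2). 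Thus $\cT_w$ is the only indecomposable summand of $\cT_\uw$ satisfying both~(1) and~(2), and is therefore determined up to isomorphism.

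The step I expect to require the most care is the flag bookkeeping of the last paragraph — the precise assertion that, for a reduced expression, the standard filtration of $\cT_\uw$ contains $\dDelta_w$ exactly once and otherwise only terms of strictly smaller length — although this is routine given Lemma~\ref{le:tensoring by wT s} and runs exactly as in~\cite[\S10.5]{amrw1} (see also~\cite[\S9.5]{arv}). Everything else is a formal transfer of the highest weight category formalism through the equivalence of Proposition~\ref{prop:TiltLM}.
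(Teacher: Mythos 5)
Your proof is correct and takes essentially the same route as the paper: the corollary is stated in the paper with \verb|\qed| after a one-sentence remark that everything transfers from $\Tilt_\RE(\fh,W)$ through the equivalence of Proposition~\ref{prop:TiltLM} using the standard properties of tilting objects in a graded highest weight category (\cite[\S 9.5]{arv}), which is exactly what you do. The additional Bott--Samelson bookkeeping you carry out to verify the characterization by properties~(1) and~(2) is the routine content implicit in the paper's citation of~\cite[\S 9.5]{arv} and~\cite[Appendix]{ar}, and it is carried out correctly.
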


\subsection{Free-monodromic tilting category}
\label{ss:FMT}

Given an expression $\uw=(s_1, \dots, s_r)$, we set
\[
\wT_\uw=\wT_{s_1}\wstar\cdots\wstar\wT_{s_r}\in\FM(\fh,W).
\]
Note that by~\eqref{eq:wstar and For} we have
$\cT_\uw=\For^\FM_\LM(\wT_\uw)$.

Proposition~\ref{prop:TiltLM} implies that for any expressions $\uv,\uw$ and $i,j \in \Z$ we have
\[
\Hom_{\LM(\fh,W)}(\cT_\uv,\cT_\uw[i]\langle j\rangle)=0 \quad \text{unless $i=0$}
\]
(see~\cite[$(9.2)$]{arv}). 
As in the proof of \cite[Corollary~10.6.2]{amrw1}, the above equality and \cite[Lemma 5.2.3]{amrw1} imply the following statement.

\begin{prop}
\label{prop: Hom FM between tilting}
For any expressions $\uv$, $\uw$ and any $i,j\in\Z$, we have 
\[
\HHom_{\FM(\fh,W)}(\wT_\uv,\wT_\uw)^i_j=0\quad\mbox{unless}\quad i=0.
\]
Moreover
$\HHom_{\FM(\fh,W)}(\wT_\uv,\wT_\uw)^0_\bullet$ is graded free as a right $R^\vee$-module, and the morphism
\[
\HHom_{\FM(\fh,W)}(\wT_\uv,\wT_\uw)^0_\bullet\otimes_{R^\vee}\bk \to \HHom_{\LM(\fh,W)}(\cT_\uv,\cT_\uw)^0_\bullet 
\]
induced by the functor $\For^\FM_\LM$ is an isomorphism. \qed
\end{prop}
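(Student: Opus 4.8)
The plan is to follow the strategy of \cite[Corollary~10.6.2]{amrw1} and deduce everything from the corresponding statement in $\LM(\fh,W)$ together with the general principle that morphisms in $\FM(\fh,W)$ between objects built out of the $\wT_s$'s are ``controlled'' by the image in $\LM(\fh,W)$ via a graded Nakayama-type argument. Concretely, \cite[Lemma~5.2.3]{amrw1} should say that for convolutive objects of the form $\wT_\uv$, $\wT_\uw$, the bigraded $R^\vee$-module $\HHom_{\FM(\fh,W)}(\wT_\uv,\wT_\uw)$ is graded free as a right $R^\vee$-module (of finite rank in each cohomological degree), and that applying $(-)\otimes_{R^\vee}\bk$ (killing the right $R^\vee$-action via $\wstar$) together with the functor $\For^\FM_\LM$ yields an isomorphism onto $\HHom_{\LM(\fh,W)}(\cT_\uv,\cT_\uw)$, where we use $\cT_\uv=\For^\FM_\LM(\wT_\uv)$ and $\cT_\uw=\For^\FM_\LM(\wT_\uw)$ from \eqref{eq:wstar and For}.

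Granting that input, I would argue as follows. First, by Proposition~\ref{prop:TiltLM}, the category $\Tilt_\LM(\fh,W)$ is a full additive subcategory of the heart-of-a-highest-weight-category situation transported from $\Tilt_\RE(\fh,W)$; in particular \cite[$(9.2)$]{arv} gives $\Hom_{\LM(\fh,W)}(\cT_\uv,\cT_\uw[i]\langle j\rangle)=0$ unless $i=0$, i.e. $\HHom_{\LM(\fh,W)}(\cT_\uv,\cT_\uw)^i_\bullet=0$ for $i\neq0$. Since $\HHom_{\FM(\fh,W)}(\wT_\uv,\wT_\uw)^i_\bullet$ is graded free over $R^\vee$ for each $i$ and its reduction modulo $V\cdot R^\vee$ is $\HHom_{\LM(\fh,W)}(\cT_\uv,\cT_\uw)^i_\bullet$ by the $\For^\FM_\LM$-compatibility, the graded Nakayama lemma (in the form of Lemma~\ref{le:technical}, or directly: a graded free module with zero reduction is zero) forces $\HHom_{\FM(\fh,W)}(\wT_\uv,\wT_\uw)^i_\bullet=0$ for $i\neq0$. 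This proves the first assertion. The ``moreover'' part is then immediate: freeness over $R^\vee$ of the degree-$0$ piece and the isomorphism after $-\otimes_{R^\vee}\bk$ with $\HHom_{\LM(\fh,W)}(\cT_\uv,\cT_\uw)^0_\bullet$ are exactly the surviving content of \cite[Lemma~5.2.3]{amrw1} once the higher cohomological degrees have been shown to vanish.

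The one point requiring care — and the place where I expect the real work to sit — is verifying that \cite[Lemma~5.2.3]{amrw1} genuinely applies in the present generality, i.e. that the freeness and base-change statements for $\HHom_{\FM}$ between the $\wT_\uw$'s hold for an arbitrary realization satisfying the assumptions of \S\ref{ss:convention}--\S\ref{ss:bifunctoriality}, not just for Cartan realizations of crystallographic Coxeter groups. Here one uses that the $\wT_s$ are built by the same inductive ``cone'' recipe from \cite[\S5.3.2]{amrw1}, that convolution $\wstar$ and the functor $\For^\FM_\LM$ behave as recalled in \S\ref{ss:FM}--\S\ref{ss:LM} (in particular \eqref{eq:wstar and For}, \eqref{eq:wstar and For} applied inductively, and the compatibility of $\wstar$ with $\For^\FM_\LM$), and that all the structural results of \cite[Chap.~5--7]{amrw1} invoked there were established in \cite{amrw1} for general realizations (the geometry-specific inputs enter only later). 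One then feeds in the new ingredient that $\Tilt_\LM(\fh,W)$ has the expected homological behaviour, which is Proposition~\ref{prop:TiltLM} above (replacing the crystallographic-specific \cite[Chap.~10]{amrw1} input). Assembling these, the proof reduces, as claimed, to the two-line Nakayama argument of the previous paragraph, and one may simply write: ``As in the proof of \cite[Corollary~10.6.2]{amrw1}, combine \cite[$(9.2)$]{arv}, Proposition~\ref{prop:TiltLM} and \cite[Lemma~5.2.3]{amrw1}.'' \qed
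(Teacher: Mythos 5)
Your proposal is correct and follows the same route as the paper, which likewise combines the vanishing $\Hom_{\LM(\fh,W)}(\cT_\uv,\cT_\uw[i]\langle j\rangle)=0$ for $i\neq 0$ (from Proposition~\ref{prop:TiltLM} and~\cite[(9.2)]{arv}) with \cite[Lemma~5.2.3]{amrw1}, following the proof of \cite[Corollary~10.6.2]{amrw1}. Your closing one-line summary is essentially verbatim the paper's argument.
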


Below we will need to consider the karoubian envelope $\FM^{\Kar}(\fh,W)$ of the category $\FM(\fh,W)$. The shift functors $\langle m \rangle$ extend to autoequivalences of $\FM^{\Kar}(\fh,W)$. Given $\cF,\cG$ in $\FM^{\Kar}(\fh,W)$, we will also consider the bigraded $\bk$-vector space
\[
 \HHom_{\FM^{\Kar}(\fh,W)}(\cF,\cG) = \bigoplus_{n,m \in \Z} \Hom_{\FM^{\Kar}(\fh,W)}(\cF,\cG \langle -m \rangle [n]).
\]
Using the fact that the left and right actions of $R^\vee$ on morphism spaces in $\FM(\fh,W)$ is compatible with composition, one sees that this space has a canonical structure of bigraded $R^\vee$-bimodule. The functor $\For^\FM_\LM : \FM(\fh,W) \to \LM(\fh,W)$ extends to $\FM^{\Kar}(\fh,W)$ by the universal property of the karoubian envelope. By a minor abuse of notation we also denote this extension by $\For^\FM_\LM$.

We define the free-monodromic tilting category $\TBS(\fh,W)$ as the full subcategory of $\FM(\fh,W)$ whose objects are those of the form $\wT_\uw\langle n\rangle$ with $\uw \in \Exp(W)$ and $n \in \Z$. 
We denote by $\TBS^\oplus(\fh,W)$ the additive hull of $\TBS(\fh,W)$, and by $\rT(\fh,W)$ the karoubian envelope of $\TBS^\oplus(\fh,W)$ (a full subcategory in $\FM^{\Kar}(\fh,W)$).
By construction, the convolution operation $\wstar$ restricts to an operation
\begin{equation}
\label{eqn:convolution-tilt-FM-LM}
\wstar : \TBS(\fh,W) \times \Tilt_\LM(\fh,W) \to \Tilt_\LM(\fh,W).
\end{equation}

Proposition~\ref{prop: Hom FM between tilting} extends to all objects in $\rT(\fh,W)$, as follows.

\begin{prop}
\label{prop: analogous of Hom FM between tilting}
Let $\cF,\cG$ be objects in $\rT(\fh,W)$. Then we have
\[
\HHom_{\FM^{\Kar}(\fh,W)}(\cF,\cG)^i_j=0 \quad \text{unless} \quad i=0,
\]
$\HHom_{\FM^{\Kar}(\fh,W)}(\cF,\cG)$ 
is graded free as a right $R^\vee$-module,
and the morphism
\[
\HHom_{\FM^{\Kar}(\fh,W)}(\cF,\cG)
\otimes_{R^\vee}\bk\to\HHom_{\LM(\fh,W)}(\For^\FM_\LM(\cF),\For^\FM_\LM(\cG))
\]
induced by the functor $\For^\FM_\LM$ is an isomorphism. 
\end{prop}

\begin{proof}
By construction of the karoubian envelope, 
$\HHom_{\FM^{\Kar}(\fh,W)}(\cF,\cG)$
identifies with a direct summand of a similar space of morphisms in $\FM(\fh,W)$. Hence this space vanishes in degrees in $(\Z \smallsetminus \{0\}) \times \Z$ and is a projective graded $R^\vee$-module by Proposition~\ref{prop: Hom FM between tilting} and thus is graded-free.
The fact that $\For^\FM_\LM$ induces an isomorphism also follows from Proposition~\ref{prop: Hom FM between tilting}.
\end{proof}

The classification of the indecomposable tilting objects in $\Tilt_\LM(\fh,W)$ can be ``upgraded'' to $\rT(\fh,W)$ using Proposition~\ref{prop: analogous of Hom FM between tilting}, as in \cite[Theorem 10.7.1]{amrw1}.

\begin{theorem}
\label{thm:tilting FM}
The category $\rT(\fh,W)$ is Krull--Schmidt. For any $w\in W$, there exists a unique (up to isomorphism) indecomposable object $\wT_w$ such that
\[
\For^\FM_\LM(\wT_w)=\cT_w.
\]
In addition, $\wT_w$ is characterized by the following properties:
\begin{enumerate}
\item for any reduced expression $\uw$ expressing $w$, $\wT_w$ occurs as a direct summand of $\wT_{\uw}$ with multiplicity $1$;
\item $\wT_w$ does not occur as a direct summand of any object $\wT_\uv$ with $\uv$ an expression such that $\ell(\uv)<\ell(w)$ and $n \in \Z$.
\end{enumerate}
Moreover, the assignment $(w,n)\mapsto\wT_w\langle n\rangle$ induces a bijection bewteen $W\times\Z$ and the set of isomorphism classes of indecomposable objects in $\rT(\fh,W)$. \qed
\end{theorem}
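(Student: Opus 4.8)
The plan is to follow the strategy of the proof of \cite[Theorem~10.7.1]{amrw1}, with Proposition~\ref{prop: analogous of Hom FM between tilting} as the essential input: it turns $\For^\FM_\LM \colon \rT(\fh,W) \to \Tilt_\LM(\fh,W)$ into a functor that is surjective on morphism spaces with ``small'' kernel, so that the classification of indecomposables in $\rT(\fh,W)$ can be deduced from that of $\Tilt_\LM(\fh,W)$ recorded in Corollary~\ref{cor:Tilt LM indecomposable}. I would begin by proving that $\rT(\fh,W)$ is Krull--Schmidt. It is karoubian by construction, so it is enough to check that $\End_{\rT(\fh,W)}(X)$ is semiperfect for every $X$, and in fact I claim it is finite-dimensional over $\bk$. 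Indeed, by Proposition~\ref{prop: analogous of Hom FM between tilting} the graded vector space $E_\bullet(X) := \bigoplus_{n \in \Z} \Hom_{\rT(\fh,W)}(X, X\langle n \rangle)$ is graded free of finite rank over $R^\vee$; since $R^\vee$ is concentrated in nonpositive degrees with finite-dimensional homogeneous components, $E_\bullet(X)$ has the same property and, moreover, vanishes in all sufficiently large positive degrees. In particular $\End_{\rT(\fh,W)}(X) = E_0(X)$ is finite-dimensional, hence Artinian, hence semiperfect, which together with karoubianity yields the Krull--Schmidt property.

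Next I would examine the surjective ring homomorphism $\rho_X \colon \End_{\rT(\fh,W)}(X) \twoheadrightarrow \End_{\Tilt_\LM(\fh,W)}(\For^\FM_\LM X)$ induced by $\For^\FM_\LM$. Reading off the degree-$0$ part of the isomorphism in Proposition~\ref{prop: analogous of Hom FM between tilting}, its kernel is $N_X := \sum_{k \geq 1} R^\vee_{-2k} \wstar \Hom_{\rT(\fh,W)}(X, X\langle 2k \rangle)$. The key observation is that $N_X$ is a \emph{nilpotent} two-sided ideal: because $R^\vee$ acts centrally and $E_\bullet(X)$ is a graded ring, a product of $m$ elements of $N_X$ lies in $\sum_{j \geq m} R^\vee_{-2j} \wstar \Hom_{\rT(\fh,W)}(X, X\langle 2j\rangle)$, which vanishes once $2m$ exceeds the top degree of $E_\bullet(X)$. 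Since idempotents lift along a surjection with nilpotent kernel, for $w \in W$ and a reduced expression $\uw$ for $w$ I would lift the idempotent of $\End_{\Tilt_\LM(\fh,W)}(\cT_\uw)$ projecting onto a copy of the indecomposable $\cT_w$ (Corollary~\ref{cor:Tilt LM indecomposable}) to an idempotent of $\End_{\rT(\fh,W)}(\wT_\uw)$, and let $\wT_w$ be the corresponding direct summand of $\wT_\uw$; then $\For^\FM_\LM(\wT_w) \cong \cT_w$, and $\End_{\rT(\fh,W)}(\wT_w)$ is local because it is local modulo the nilpotent ideal $N_{\wT_w}$, so $\wT_w$ is indecomposable. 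For uniqueness — and, more generally, for the statement that any $X \in \rT(\fh,W)$ with $\For^\FM_\LM X \cong \cT_w$ is isomorphic to $\wT_w$ — I would lift $\id_{\cT_w}$ along $\rho$ in both directions to obtain maps between $X$ and $\wT_w$ whose composites are invertible modulo nilpotent ideals, hence invertible, since the relevant endomorphism rings are local.

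The characterization~(1)--(2) and the bijection then follow formally. Since $\rho_X$ is surjective with nilpotent kernel for every $X$, the functor $\For^\FM_\LM$ induces a multiplicity-preserving bijection between the indecomposable summands of $\wT_\uv$ and those of $\cT_\uv$ for every expression $\uv$; property~(1) is then immediate from Corollary~\ref{cor:Tilt LM indecomposable}(1), and for~(2) a summand $\wT_w$ of some $\wT_\uv\langle n\rangle$ would map to a summand $\cT_w$ of $\cT_\uv\langle n\rangle$, contradicting Corollary~\ref{cor:Tilt LM indecomposable}(2) when $\ell(\uv) < \ell(w)$. Finally, any indecomposable $X \in \rT(\fh,W)$ is a direct summand of some $\wT_\uv\langle n\rangle$ by Krull--Schmidt, so $\For^\FM_\LM X$ is nonzero with local endomorphism ring (being local modulo $N_X$), hence indecomposable, hence isomorphic to $\cT_w\langle m\rangle$ for a unique pair $(w,m)$; by the uniqueness statement above $X \cong \wT_w\langle m\rangle$, and injectivity of $(w,n)\mapsto \wT_w\langle n\rangle$ follows from that of $(w,n)\mapsto\cT_w\langle n\rangle$. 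I expect the only genuinely delicate point to be the nilpotency of $N_X$, which hinges on the boundedness in the internal grading of morphism spaces in $\rT(\fh,W)$ (a consequence of Propositions~\ref{prop: Hom FM between tilting} and~\ref{prop: analogous of Hom FM between tilting}); the rest is the standard ``reduction modulo a nilpotent ideal'' machinery for Krull--Schmidt categories.
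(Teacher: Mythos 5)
Your proposal is correct and follows the same route as the paper, which proves the statement by adapting the argument of \cite[Theorem 10.7.1]{amrw1} with Proposition~\ref{prop: analogous of Hom FM between tilting} as the essential input. In particular the mechanisms you identify — finite-dimensionality of endomorphism rings in $\rT(\fh,W)$ (giving Krull--Schmidt), surjectivity of $\For^\FM_\LM$ on morphism spaces with kernel a nilpotent ideal for grading reasons, and the resulting lifting of idempotents and locality from $\Tilt_\LM(\fh,W)$ — are precisely the ones used there.
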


In case $w=1$ we have $\wT_1=\wT_\varnothing$, and if $\uw=s \in S$ then $\wT_s$ is the object denoted in this way above. In these cases the object $\wT_w$ is canonical; in general, it is only defined up to isomorphism.

\subsection{The bifunctoriality assumption}
\label{ss:bifunctoriality}

As explained in~\S\ref{ss:FM}, it is not known whether the operation $\wstar$ is a bifunctor on the subcategory of convolutive objects in $\FM(\fh,W)$. As explained in~\cite[\S 7.7]{amrw1}, it is expected that this condition holds at least on the subcategory $\TBS(\fh,W)$, which boils down to the fact that for $f : \cF \to \cG$, $g : \cG \to \cH$, $h : \cF' \to \cG'$ and $k : \cG' \to \cH'$ morphisms between objects in $\TBS(\fh,W)$ we have
\begin{equation}
\label{eqn:exchange}
(g \circ f) \wstar (k \circ h)=
(g \wstar k) \circ (f \wstar h).
\end{equation}
Below we will assume that this property holds for our given realization, which implies that $\wstar$ induces a monoidal structure on the category $\TBS(\fh,W)$ (and then on $\TBS^\oplus(\fh,W)$ and on $\rT(\fh,W)$).

\begin{rmk}
 \phantomsection
 \label{rmk:bifunctoriality}
\begin{enumerate}
\item 
In case our realization is a Cartan realization of a crystallographic Coxeter group, the main result of~\cite[Chap.~11]{amrw1} states that the condition above is satisfied.
\item
Using the results of~\S\ref{ss:FMT}, the methods of~\cite[Chap.~11]{amrw1} also apply for a general realization of a general Coxeter group, provided for any pair $s,t \in S$ of distinct simple reflections generating a finite subgroup of $W$, the conditions in~\cite[Chap.~8]{amrw1} are satisfied by $\fh_{|\langle s,t \rangle}$.
This case covers at least, for any Coxeter system, the geometric realization and the Soergel realizations (see~\S\ref{ss:convention}), provided the condition~\eqref{it:assumptions-3} of~\S\ref{ss:convention} is satisfied in case $(W,S)$ admits a parabolic subsystem of type $\mathsf{H}_3$.
\item
Matthew Hogancamp and Shotaro Makisumi have announced a proof of this condition in full generality (provided the assumptions~\eqref{it:assumptions-1}--\eqref{it:assumptions-3}--\eqref{it:assumptions-4} of~\S\ref{ss:convention} are satisfied). Unfortunately, no written account of their proof is available at this point.
\item
\label{it-action-Tilt}
Using Proposition~\ref{prop: Hom FM between tilting} one obtains that, if the condition above is satisfied, the operation~\eqref{eqn:convolution-tilt-FM-LM} is a bifunctor, and defines on $\Tilt_\LM(\fh,W)$ a structure of module
category for the monoidal category $(\rT(\fh,W),\wstar)$.
\end{enumerate}
\end{rmk}

%

\section{A functor \texorpdfstring{$\V$}{V}}
\label{sec:functor V}

In this section we assume that the conditions of~\S\ref{ss:convention},~\S\ref{ss:dual-real} and~\S\ref{ss:bifunctoriality} are satisfied. In addition, we assume that $W$ is finite.

\subsection{Statement}
\label{ss:V-statement}

Recall the dual realization $\fh^*$ of $(W,S)$ considered in~\S\ref{ss:dual-real}. Our aim is to prove the following statement.

\begin{theorem}
\label{thm:V}
There exists a canonical equivalence of monoidal additive categories
\[
\V:\TBS(\fh,W) \simto \AbeBS(\fh^*,W) 
\]
which satisfies
$\V\circ\langle1\rangle= \langle -1 \rangle\circ\V$ and  $\V(\wT_\uw)\simeq B_\uw^\wedge$ for any expression $\uw$. 
\end{theorem}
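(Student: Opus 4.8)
The strategy is to construct the functor $\V$ explicitly on objects and generating morphisms, and then invoke the fact that $\TBS(\fh,W)$ is generated, under convolution and additivity, by the objects $\wT_s$ ($s \in S$), together with Abe's presentation-type results for $\AbeBS(\fh^*,W)$. First I would set $\V(\wT_\uw) := B^\wedge_\uw$ and, more generally, $\V(\wT_\uw\langle n\rangle) := B^\wedge_\uw\langle n\rangle$, so that the compatibility with $\langle 1\rangle$ holds by fiat and the compatibility with $\wstar$ on objects is the identity $B^\wedge_\uv \otimes_{R^\wedge} B^\wedge_\uw = B^\wedge_{\uv\uw}$. The content is therefore entirely in the definition of $\V$ on morphisms. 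Here I would mimic the construction in~\cite[\S\S 4--5]{amrw2}: the morphism spaces in $\FM(\fh,W)$ between the $\wT_\uw$'s are built out of the generating morphisms of $\DiagBS(\fh,W)$ (dots, trivalent vertices, $2m$-valent vertices, polynomial boxes) together with the free-monodromic structure maps, and I would specify the image under $\V$ of each such generator as a morphism between the corresponding Bott--Samelson bimodules in Abe's category $\AbeBS(\fh^*,W)$. The polynomial boxes labelled by $f \in R^\vee = \operatorname{Sym}(V)$ go to multiplication by the corresponding element of the symmetric algebra attached to $\fh^*$ (which is $R^\wedge$), and the one-color generators (dots, trivalent vertices) are transported exactly as in~\cite{amrw2}, as the excerpt says these constructions "can be copied."

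\textbf{Key steps.} (1) Verify that $\TBS(\fh,W)$ admits a "Bott--Samelson type" description: its morphism spaces are spanned by compositions and convolutions of the images of the one- and two-color generators and the free-monodromic structure morphisms, using Proposition~\ref{prop: Hom FM between tilting} (graded-freeness of $\HHom_{\FM}(\wT_\uv,\wT_\uw)$ over $R^\vee$ and concentration in cohomological degree $0$) and the bifunctoriality assumption of~\S\ref{ss:bifunctoriality}, which makes $\wstar$ a genuine monoidal structure. (2) Define $\V$ on each generator; the nontrivial case is the two-color generator, which is not available from geometry here and instead must be defined via the "functor $\V$" machinery of Section~\ref{sec:functor V} applied to the dihedral parabolic $\langle s,t\rangle$ — but since the theorem we are proving \emph{is} that functor in the finite case, the two-color morphism is constructed directly as a map of $R^\wedge$-bimodules in $\AbeBS(\fh^*_{|\langle s,t\rangle},\langle s,t\rangle)$, using Abe's explicit presentation of morphism spaces between Bott--Samelson objects for dihedral groups together with the element $\delta_s \in V^*$ (which, for $\fh^*$, lives in $V$) appearing in~\eqref{eq:Bs in Abe}. (3) Check that these images satisfy all the defining relations: the one-color relations follow as in~\cite{amrw2}, and the relations involving the two-color generator (the "Jones--Wenzl" relations, the Zamolodchikov relations in rank $3$, etc.) are verified inside Abe's category using the rotatability of Jones--Wenzl projectors guaranteed by assumption~\eqref{it:assumptions-4} of~\S\ref{ss:convention} and Hazi's results. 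This shows $\V$ is well-defined as a monoidal functor. (4) Prove $\V$ is an equivalence: it is essentially surjective by construction (its image contains the generating objects $B^\wedge_s$ and is closed under $\otimes_{R^\wedge}$ and $\langle 1\rangle$), and it is fully faithful because on each Hom-space it is a morphism of graded-free $R^\vee = R^\wedge$-modules which becomes an isomorphism after $\otimes_{R^\vee}\bk$ — the target side of the reduction mod $V$ being controlled via the functor $\For^\FM_\LM$ and Proposition~\ref{prop: analogous of Hom FM between tilting} on one side, and via Abe's Soergel-type $\Hom$-formula on the other — so Lemma~\ref{le:technical} (graded Nakayama) upgrades this to an isomorphism before reduction.

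\textbf{Main obstacle.} The crux, as the excerpt makes explicit, is step (2) for the two-color generator: producing an honest morphism between the relevant Bott--Samelson objects in $\AbeBS(\fh^*_{|\langle s,t\rangle},\langle s,t\rangle)$, proving it lands in the correct $M^w_Q$ subspaces so as to be a morphism in Abe's category $\cC(\fh^*,W)$, and then verifying the full list of Elias--Williamson relations it must satisfy. This replaces a geometric input from~\cite{amrw2} by a purely algebraic construction inside Abe's framework, and it is precisely what Section~\ref{sec:construction} of the paper is devoted to; the verification of the dihedral (and, through type $\mathsf{H}_3$, rank-$3$) relations is where assumptions~\eqref{it:assumptions-3} and~\eqref{it:assumptions-4} of~\S\ref{ss:convention} and the results of~\cite{hazi,abe2} are essential. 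Once this single morphism is in hand and the relations are checked, the remaining arguments — essential surjectivity, and full faithfulness via graded Nakayama and the $\For^\FM_\LM$ comparison — are routine adaptations of~\cite[Chap.~11]{amrw1} and~\cite{amrw2}.
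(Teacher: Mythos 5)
Your proposal does not follow the paper's approach, and more importantly it has a structural gap that would make it fail as written.

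The core of your plan is to define $\V$ on morphisms by specifying the images of ``generators'' (dots, trivalent vertices, $2m_{st}$-valent vertices, polynomial boxes) and then checking relations. This is a generators-and-relations construction, and it can only produce a functor \emph{out of} a category that is \emph{presented} by generators and relations. But $\V$ goes from $\TBS(\fh,W)$ to $\AbeBS(\fh^*,W)$, and $\TBS(\fh,W)$ is not defined by generators and relations: it is a full subcategory of the free-monodromic category $\FM(\fh,W)$, whose morphism spaces are only controlled via Proposition~\ref{prop: Hom FM between tilting} (graded rank over $R^\vee$), not via a known spanning set of elementary morphisms. The claim in your step (1), that morphisms between the $\wT_\uw$'s are ``built out of the generating morphisms of $\DiagBS(\fh,W)$ together with the free-monodromic structure maps'' in a way that gives a presentation, is exactly what one does \emph{not} know a priori; it is, in effect, the content of Theorem~\ref{thm:Phi}, whose proof \emph{uses} Theorem~\ref{thm:V}. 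You have therefore reversed the logical order: the generators-and-relations strategy is the paper's construction of $\Phi : \DiagBS(\fh^*,W) \to \TBS(\fh,W)$ in Section~\ref{sec:functor Phi}, which goes in the opposite direction precisely because $\DiagBS(\fh^*,W)$ is the category with a presentation. This confusion also shows in your step (2): the ``two-color morphism constructed directly as a map of $R^\wedge$-bimodules'' is a morphism $\varphi_{s,t}$ in Abe's category $\AbeBS(\fh^*,W)$ (which is indeed available from~\cite{abe2}), but what is needed to run a generators-and-relations argument in the direction you propose is a morphism $\wT_{w(s,t)} \to \wT_{w(t,s)}$ in $\TBS(\fh,W)$, and producing that is precisely the chicken-and-egg problem you noted without resolving.

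The paper's actual proof of Theorem~\ref{thm:V} is entirely different and sidesteps this issue: it defines $\V$ as a representable-type (Soergel-style) functor, $\V(\cF) = (\bigoplus_n \Hom_{\rT(\fh,W)}(\wP, \cF\langle n\rangle))^\wedge$, where $\wP = \wT_{w_0}\langle -\ell(w_0)\rangle$ is the free-monodromic big tilting object (\S\ref{ss: big tilting}--\ref{ss:V}). Monoidality comes from the coalgebra structure on $\wP$ (Proposition~\ref{prop: wP coalgebra}), the factorization through Abe's category $\cC(\fh^*,W)$ uses the localization functor and the splitting morphisms $\phi_\uw$ (\S\ref{localization}, Lemma~\ref{lem:V-factorization-Abe}), the identifications $\V(\wT_s)\simeq B_s^\wedge$ come from the bimodule isomorphism $\gamma_s$ (Lemma~\ref{le:gamma iso}), and full faithfulness is proved by first showing faithfulness (via the reduced functor $\V'$, graded Nakayama, and Lemma~\ref{le:V' faithful}) and then matching graded ranks with Abe's $\Hom$-formula (Lemma~\ref{le:Hom of equal grk}). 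None of the objects or lemmas at the heart of this construction — $\wP$, its coalgebra structure, the localization/splitting, the functor $\V'$ — appear in your proposal. To repair your argument, you would need to abandon the generators-and-relations definition of $\V$ and instead introduce the big tilting object and its coalgebra structure as the defining device.
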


We construct the functor $\V$ and prove the theorem in \S\ref{ss:V} and \S\ref{ss:Invertibility of V}. Before that we need some preliminaries.

\subsection{The big tilting object}
\label{ss: big tilting}

Let $w_0$ be the longest element in $W$, and consider the corresponding indecomposable tilting object $\cT_{w_0}$ in $\Tilt_\LM(\fh,W)$, see Corollary~\ref{cor:Tilt LM indecomposable}. We set 
\[
\cP:=\cT_{w_0}\langle-\ell(w_0)\rangle.
\]
By \cite[Theorem 10.3]{arv}, $\cP$ is the projective cover of the simple object $\cT_1=\cL_1$, and \cite[$(10.5)$]{arv} implies that
\begin{equation}
\label{eq:dim Hom LM P nabla}
\dim_\bk\Hom_{\LM(\fh,W)}(\cP,\dnabla_w\langle m\rangle)=
\begin{cases}
1&\mbox{if $m=-\ell(w)$;}\\
0&\mbox{otherwise.} 
\end{cases}
\end{equation}
In particular, we have
\begin{equation}
\label{eqn:Hom-P-T1}
\HHom_{\LM(\fh,W)}(\cP,\cT_1)^i_j=
\begin{cases}
\bk&\text{if $i=j=0$;}\\
0&\text{otherwise.} 
\end{cases}
\end{equation}

We fix an indecomposable tilting object $\wT_{w_0}$ in $\rT(\fh,W)$ such that $\For^\FM_\LM(\wT_{w_0})=\cT_{w_0}$, see Theorem \ref{thm:tilting FM}, and set 
\begin{equation}
\label{eqn:wP}
\wP:=\wT_{w_0}\langle-\ell(w_0)\rangle.
\end{equation}
Proposition \ref{prop: analogous of Hom FM between tilting} and~\eqref{eqn:Hom-P-T1} imply that we have an isomorphism of graded $R^\vee$-modules
\[
\HHom_{\FM^{\Kar}(\fh,W)}(\wP,\wT_1)
\simeq R^\vee, 
\]
hence in particular that
\[
\dim_\bk(\Hom_{\rT(\fh,W)}(\wP,\wT_1))=1.
\]
We fix from now on a nonzero morphism
\begin{equation}
\label{eq:xi}
\xi:\wP \to \wT_1,
\end{equation}
which is automatically a generator of 
$\HHom_{\FM^{\Kar}(\fh,W)}(\wP,\wT_1)$
as a right $R^\vee$-module. We also set $\xi'=\For^\FM_\LM(\xi)$, a generator of $\Hom_{\LM(\fh,W)}(\cP,\cT_1)$.

The objects $\wP$ and $\cP$ are studied in \cite[\S\S3.1--3.4]{amrw2} for Cartan realizations of (finite) crystallographic Coxeter groups. As in \S\ref{ss:FMT}, these results hold in our present setting, and their proofs can be copied from~\cite{amrw2}, replacing the references to~\cite{ar} by references to~\cite{arv}. Below we state the results we will use, and give sketches of proofs.


\begin{lema}
In the abelian category $\fP_\LM(\fh,W)$ we have
\[
[\cP:\cL_1\langle m\rangle]=0
\]
unless $m\leq0$, and moreover $[\cP:\cL_1]=1$. In particular, $\End_{\LM(\fh,W)}(\cP)=\bk\cdot\id$.
\end{lema}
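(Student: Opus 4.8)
The plan is to deduce everything from the projective-cover property of $\cP$ and the description of $\Hom$-spaces into costandard objects recorded in~\eqref{eq:dim Hom LM P nabla}, working inside the graded highest weight category $\fP_\LM(\fh,W)$.

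First I would recall that $\cP$ is the projective cover of $\cL_1$ in $\fP_\LM(\fh,W)$. Since $\fP_\LM(\fh,W)$ is a graded highest weight category with normalized standard and costandard objects $\dDelta_w$ and $\dnabla_w$, the projective $\cP$ admits a standard filtration, and the graded multiplicities are governed by the BGG-type reciprocity formula: for each $w \in W$ and $m \in \Z$, the multiplicity $(\cP : \dDelta_w\langle m\rangle)$ of $\dDelta_w\langle m \rangle$ in a standard filtration of $\cP$ equals $\dim_\bk \Hom_{\LM(\fh,W)}(\cP, \dnabla_w\langle m \rangle)$, which by~\eqref{eq:dim Hom LM P nabla} is $1$ if $m = -\ell(w)$ and $0$ otherwise. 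Hence $\cP$ has a standard filtration with subquotients exactly $\dDelta_w\langle -\ell(w)\rangle$, each occurring once, as $w$ ranges over $W$.

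Next I would compute $[\cP : \cL_1\langle m\rangle]$ by adding up contributions from the standard subquotients: $[\cP : \cL_1\langle m\rangle] = \sum_{w \in W} [\dDelta_w\langle -\ell(w)\rangle : \cL_1\langle m \rangle]$. By Proposition~\ref{prop:L1 Delta w nabla w}(1), the socle of $\dDelta_w$ is $\cL_1\langle -\ell(w)\rangle$ and the cokernel of $\cL_1\langle -\ell(w)\rangle \hookrightarrow \dDelta_w$ has no composition factor $\cL_1\langle n\rangle$; therefore $[\dDelta_w : \cL_1\langle n\rangle] = \delta_{n,-\ell(w)}$, so $[\dDelta_w\langle -\ell(w)\rangle : \cL_1\langle m\rangle] = \delta_{m, -2\ell(w)}$. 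Since $\ell(w) \geq 0$ with equality only for $w = 1$, we get $[\cP : \cL_1\langle m\rangle] = 0$ unless $m \leq 0$, and $[\cP : \cL_1] = 1$ (the only contribution being $w = 1$, where $\dDelta_1 = \cL_1$).

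Finally, for the endomorphism claim: since $\cP$ is projective with a one-dimensional top $\cL_1$ (it is a projective cover), any endomorphism $\varphi$ of $\cP$ is determined up to a scalar by whether it is an isomorphism or not; more precisely, $\End_{\LM(\fh,W)}(\cP)$ is a local ring with residue field $\bk$ (as $\cP$ is indecomposable in a Krull--Schmidt category), and its maximal ideal consists of the non-invertible endomorphisms, which are exactly those whose image lies in the radical $\operatorname{rad}\cP$. An endomorphism landing in $\operatorname{rad}\cP$ composed with the projection $\cP \twoheadrightarrow \cL_1$ is zero, so it factors through $\operatorname{rad}\cP \hookrightarrow \cP$; but $[\cP : \cL_1] = 1$ forces $\operatorname{rad}\cP$ to have no $\cL_1$-composition factor in degree $0$, while $\operatorname{Hom}(\cP, -)$ being exact and $\cP$ projective means $\Hom_{\LM(\fh,W)}(\cP, \operatorname{rad}\cP)$ in degree $0$ surjects onto $\Hom$ into the degree-$0$ part of the head of $\operatorname{rad}\cP$, which vanishes. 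Hence the maximal ideal of $\End(\cP)$ vanishes in degree $0$, so $\End_{\LM(\fh,W)}(\cP) = \bk \cdot \id$.

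I expect the only genuine subtlety to be verifying BGG reciprocity and the exactness/projectivity bookkeeping in the \emph{graded} highest weight setting with an infinite (but locally finite) weight poset; but since $W$ is assumed finite here, the poset is finite and one can simply quote the standard facts about graded highest weight categories used throughout~\cite{arv} and~\cite[Appendix]{ar}. The rest is a short multiplicity count. In the write-up I would phrase this exactly as in~\cite[\S3.2]{amrw2}, replacing references to~\cite{ar} by references to~\cite{arv}.
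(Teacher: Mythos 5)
Your proposal is correct and follows essentially the same route as the paper: a standard filtration of $\cP$, graded reciprocity to show $(\cP:\dDelta_w\langle n\rangle)=\delta_{n,-\ell(w)}$, and Proposition~\ref{prop:L1 Delta w nabla w} to convert to $\cL_1$-multiplicities; the paper uses the reciprocity $(\cP:\dDelta_w\langle n\rangle)=[\dnabla_w\langle n\rangle:\cL_1]$ where you use $(\cP:\dDelta_w\langle m\rangle)=\dim\Hom(\cP,\dnabla_w\langle m\rangle)$, but these give the same count. One small note on the last step: the ``surjects onto the head'' phrasing does not by itself force $\Hom(\cP,\operatorname{rad}\cP)=0$; the clean statement you want is $\dim_\bk\Hom_{\LM(\fh,W)}(\cP,M)=[M:\cL_1]$ (exactness of $\Hom(\cP,-)$ plus the characterization of the projective cover), which immediately gives $\dim\End(\cP)=[\cP:\cL_1]=1$.
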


\begin{proof}
The proof is similar to that of~\cite[Lemma 3.1]{amrw2}. Namely,
as $\cP$ is tilting it admits a standard filtration. Using the reciprocity formula (see e.g.~\cite[$(10.1)$]{arv}), for $w \in W$ and $n \in \Z$ we have 
\[
(\cP:\dDelta_w\langle n\rangle)=[\dnabla_w\langle n\rangle:\cL_1],
\]
which is equal to $1$ if $n=-\ell(w)$ and to $0$ otherwise by Proposition \ref{prop:L1 Delta w nabla w}. Using again Proposition \ref{prop:L1 Delta w nabla w} we deduce that
\[
[\cP:\cL_1\langle n\rangle]=\#\{w\in W\mid n=-2\ell(w)\},
\]
which implies the statement.
\end{proof}

Let $s\in S$ and $\wepsilon_s:\wT_s\to\wT_1\langle1\rangle$ be the morphism defined in \cite[\S5.3.4]{amrw1}. As in~\cite[\S3.3]{amrw2}, there exists a unique morphism
\[
\zeta_s':\cP\to\cT_s\langle-1\rangle  
\]
such that $\For^\FM_\LM(\wepsilon_s\langle-1\rangle)\circ\zeta_s'=\xi'$. In turn, as in~\cite[Lemma 3.5]{amrw2}, there exists a unique morphism
\begin{equation}
\label{eq:zeta s}
\zeta_s:\wP\to\wT_s\langle-1\rangle 
\end{equation}
such that $\For^\FM_\LM(\zeta_s)=\zeta_s'$, and this morphism satisfies
\begin{equation}
\label{eqn:epsilon-zeta-xi}
\wepsilon_s\langle-1\rangle\circ\zeta_s=\xi.
\end{equation}
And there is a unique isomorphism of graded $R^\vee$-bimodules
\begin{equation}
\label{eq:gamma s}
\gamma_s':R^\vee\otimes_{(R^\vee)^s}R^\vee \langle 1 \rangle \simto 
\HHom_{\FM^{\Kar}(\fh,W)}(\wP,\wT_s)
\end{equation}
sending $u_s=1\otimes 1$ to $\zeta_s$.

The following statement is the analogue in our setting of~\cite[Proposition 3.6]{amrw2}, for which the same proof applies.

\begin{prop}
\label{prop: wP coalgebra}
The object $\wP$ admits a canonical coalgebra structure in the monoidal category $\rT(\fh,W)$, with counit $\xi:\wP\to\wT_1$ and the comultiplication morphism $\delta:\wP\to\wP\wstar\wP$ characterized as the unique morphism such that $(\xi\wstar\xi)\circ\delta=\xi$. \qed
\end{prop}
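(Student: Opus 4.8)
The plan is to transcribe the proof of \cite[Proposition~3.6]{amrw2}, replacing each appeal to \cite{ar} by the corresponding result of \cite{arv} recorded in~\S\ref{ss:LMT}--\S\ref{ss:FMT} and~\S\ref{ss: big tilting}. Once the morphism $\delta$ is constructed and shown to be the unique one with the stated property, counitality and coassociativity become purely formal consequences of the interchange law~\eqref{eqn:exchange} (valid in $\rT(\fh,W)$ by the assumption of~\S\ref{ss:bifunctoriality}). So the whole argument reduces to computing a handful of $\Hom$-spaces in $\rT(\fh,W)$ between low convolution powers of $\wP$.

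Write $\wP^{\wstar k}:=\wP\wstar\cdots\wstar\wP$ ($k$ factors), which lies in $\rT(\fh,W)$, and let $\xi^{\wstar k}\colon\wP^{\wstar k}\to\wT_1$ be the $k$-fold convolution of $\xi$ (using $\wT_1^{\wstar k}=\wT_1$). The key assertion is that for $1\le k\le 3$ the space $\Hom_{\rT(\fh,W)}(\wP,\wP^{\wstar k})$ is one-dimensional and that composition with $\xi^{\wstar k}$ yields an isomorphism $\Hom_{\rT(\fh,W)}(\wP,\wP^{\wstar k})\xrightarrow{\sim}\Hom_{\rT(\fh,W)}(\wP,\wT_1)=\bk\cdot\xi$. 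To prove it I would first note that $\For^\FM_\LM(\wP^{\wstar k})=\wP^{\wstar(k-1)}\wstar\cP$ by~\eqref{eq:wstar and For}, and that this object lies in the heart $\fP_\LM(\fh,W)$, since $\wP^{\wstar(k-1)}$ is a direct summand of some $\wT_\uv$ and each functor $\wT_s\wstar(-)$ is $t$-exact (Lemma~\ref{le:tensoring by wT s is exact}). As $\cP$ is the projective cover of $\cL_1$ in $\fP_\LM(\fh,W)$ (\cite[Theorem~10.3]{arv}) and $\Db\fP_\LM(\fh,W)\simeq\LM(\fh,W)$, we get $\HHom_{\LM(\fh,W)}(\cP,\wP^{\wstar(k-1)}\wstar\cP)^i_\bullet=0$ for $i\ne 0$, and $\dim_\bk\Hom_{\LM(\fh,W)}(\cP,(\wP^{\wstar(k-1)}\wstar\cP)\langle -n\rangle)=[\wP^{\wstar(k-1)}\wstar\cP:\cL_1\langle n\rangle]$. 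Exactly as in~\cite[\S3]{amrw2} --- using the reciprocity formula~\cite[(10.1)]{arv}, Proposition~\ref{prop:L1 Delta w nabla w}, the lemma of~\S\ref{ss: big tilting} and~\eqref{eq:dim Hom LM P nabla} to control the $\dDelta$-filtration of $\wP^{\wstar(k-1)}\wstar\cP$ --- one obtains that this multiplicity vanishes for $n>0$ and equals $1$ for $n=0$. Feeding this into Proposition~\ref{prop: analogous of Hom FM between tilting}, whose source $\bigoplus_n\Hom_{\rT(\fh,W)}(\wP,\wP^{\wstar k}\langle n\rangle)$ is graded free over $R^\vee$ with $R^\vee$ concentrated in nonpositive degrees, forces the degree-$0$ component $\Hom_{\rT(\fh,W)}(\wP,\wP^{\wstar k})$ to be one-dimensional. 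Finally, $\For^\FM_\LM(\xi^{\wstar k})$ is a composite of surjections onto the simple object $\cT_1=\cL_1$ (each $\wP^{\wstar j}\wstar(-)$ being exact), so lifting the projective cover $\cP\twoheadrightarrow\cL_1$ through it shows that composition with $\xi^{\wstar k}$ is nonzero, hence an isomorphism.

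Granting this assertion, the rest is formal. Applying it with $k=2$ produces a unique morphism $\delta\colon\wP\to\wP\wstar\wP$ with $(\xi\wstar\xi)\circ\delta=\xi$; this is the desired comultiplication. For counitality, the interchange law gives $\xi\circ(\xi\wstar\id_\wP)=\xi\wstar\xi=\xi\circ(\id_\wP\wstar\xi)$ after the identifications $\wT_1\wstar\wP=\wP=\wP\wstar\wT_1$, so both $(\xi\wstar\id_\wP)\circ\delta$ and $\id_\wP$ are sent to $\xi$ by $\xi\circ(-)\colon\End_{\rT(\fh,W)}(\wP)\to\bk\cdot\xi$, which is injective by the case $k=1$; hence $(\xi\wstar\id_\wP)\circ\delta=\id_\wP$, and symmetrically $(\id_\wP\wstar\xi)\circ\delta=\id_\wP$. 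For coassociativity, the interchange law gives $\xi^{\wstar 3}\circ(\delta\wstar\id_\wP)=((\xi\wstar\xi)\circ\delta)\wstar\xi=\xi\wstar\xi$ and likewise $\xi^{\wstar 3}\circ(\id_\wP\wstar\delta)=\xi\wstar\xi$, so $(\delta\wstar\id_\wP)\circ\delta$ and $(\id_\wP\wstar\delta)\circ\delta$ are both sent to $\xi$ by $\xi^{\wstar 3}\circ(-)\colon\Hom_{\rT(\fh,W)}(\wP,\wP^{\wstar 3})\to\bk\cdot\xi$, which is injective by the case $k=3$; hence they coincide. The whole construction depends only on the data $(\wP,\xi)$, which is the sense in which it is canonical.

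The one non-formal point --- the main obstacle --- is thus the multiplicity statement $[\wP^{\wstar(k-1)}\wstar\cP:\cL_1\langle n\rangle]=0$ for $n>0$ and $=1$ for $n=0$, for $1\le k\le 3$: the control of the (co)standard filtration of the low convolution powers of the big projective-tilting object $\cP$. This is precisely the computational input of~\cite[\S3]{amrw2}; it transfers verbatim to the present setting once the graded highest weight machinery of~\cite{arv} is available, and everything else is manipulation with the interchange law.
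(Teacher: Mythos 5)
Your proposal is correct and follows the same route as the paper, which simply cites the proof of~\cite[Proposition~3.6]{amrw2} as applying verbatim once the results of~\cite{arv} replace those of~\cite{ar}. The key computational input you isolate — that $[\wP^{\wstar(k-1)}\wstar\cP:\cL_1\langle n\rangle]$ vanishes for $n>0$ and equals $1$ for $n=0$, for $1\le k\le 3$ — is indeed what drives the argument, and your derivation of it (via the $\dDelta$-filtration, Lemma~\ref{le:tensoring by wT s}, Proposition~\ref{prop:L1 Delta w nabla w}, and the graded-Nakayama step through Proposition~\ref{prop: analogous of Hom FM between tilting}) matches the mechanism in~\cite[\S3]{amrw2}; the formal derivation of counitality and coassociativity from the interchange law and the $k=1,3$ cases is likewise as in that reference.
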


\subsection{Localization}\label{localization}

We now recall the localization procedure from~\cite[\S11.1]{amrw1}. (Once again, in~\cite{amrw1}, this construction is considered only for Cartan realizations of crystallographic Coxeter groups, but it now makes sense in our present setting.)

Recall the graded ring $Q^\vee$ from~\S\ref{ss:gradings}. If $\cF,\cG$ are objects in $\FM(\fh,W)$ we set
\[
\HHom_{\loc}(\cF,\cG) := \HHom_{\FM(\fh,W)}(\cF,\cG) \otimes_{R^\vee} Q^\vee,
\]
where we consider the \emph{right} action of $R^\vee$ on $\HHom_{\FM(\fh,W)}(\cF,\cG)$ from~\S\ref{ss:FM}.
Let
\[
\Loc(\fh,W)
\]
be the category whose objects are the same as those of $\FM(\fh,W)$, and such that the space of morphisms from $\cF$ to $\cG$ consists of the elements of bi-degree $(0,0)$ in $\HHom_{\loc}(\cF,\cG)$. By construction there exists a canonical functor
\[
\Loc : \FM(\fh,W) \to \Loc(\fh,W).
\]
%
We will also denote by $\Loc'(\fh,W)$ the category obtained from $\FM^{\Kar}(\fh,W)$ by the same procedure as $\Loc(\fh,W)$ is obtained from $\FM(\fh,W)$.
Then there exists a canonical fully faithful functor $\Loc(\fh,W) \to \Loc'(\fh,W)$.

Recall from~\cite[\S7.4]{amrw2} that for any $s \in S$ we have a certain convolutive object $\wnabla_s$ in $\FM(\fh,W)$. Then, for an expression $\uw=(s_1, \cdots, s_n)$ we can consider the object
\[
\wnabla_{\uw}=\wnabla_{s_1}\wstar\cdots\wstar\wnabla_{s_n}.
\]
When $\uw = \varnothing$ we set
$\wnabla_\varnothing:=\wT_\varnothing$.  Using \cite[Lemma 7.4.2 and Corollary 11.3.2]{amrw2}, one sees that we have isomorphisms of graded $Q^\vee$-modules
\begin{equation}
\label{eq:HHom loc costandard}
\HHom_{\loc}(\wnabla_{\ux},\wnabla_{\uy})\cong
\begin{cases}
Q^\vee&\text{if $\pi(\ux)=\pi(\uy)$;}\\                                                      
0&\mbox{otherwise}
\end{cases}
\end{equation}
where $\pi$ is as in~\S\ref{ss:convention}.

Consider the morphism
\begin{equation}
\label{eq:phi s}
\phi_s:\wT_s \to \wnabla_\varnothing\langle1\rangle\oplus\wnabla_s
\end{equation}
in $\FM(\fh,W)$ denoted $\varkappa^2_s$ in~\cite[Proposition 11.2.1]{amrw1}. Given an expression $\uw=(s_1, \cdots, s_n)$, we set
\begin{equation}
\label{eq:phi w}
\phi_\uw:=\phi_{s_1}\wstar\cdots\wstar\phi_{s_n}:\wT_\uw\to(\wnabla_\varnothing\langle1\rangle\oplus\wnabla_{s_1})\wstar\cdots\wstar
(\wnabla_\varnothing\langle1\rangle\oplus\wnabla_{s_n}).
\end{equation}
With this definition, it is clear that for any expressions $\uv,\uw$ we have
\begin{equation}
\label{eqn:phi-product}
\phi_\uv\wstar\phi_\uw=\phi_{\uv\uw}.
\end{equation}

The following statement is our version of~\cite[Corollary~11.2.2]{amrw1}, which follows from the same arguments.

\begin{lema}
\label{le:phis}
 For any expression $\uw$, the morphism $\Loc(\phi_\uw)$ is an isomorphism.\qed
\end{lema}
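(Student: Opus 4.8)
The plan is to reduce to the case of a single simple reflection and then bootstrap to arbitrary expressions by means of the multiplicativity \eqref{eqn:phi-product}. The base cases are $\uw=\varnothing$, where $\phi_\varnothing$ is the identity morphism of $\wT_\varnothing=\wnabla_\varnothing$ and there is nothing to do, and $\uw=s$ with $s\in S$, which is the essential case.

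For $\uw=s$ I would follow \cite[Proposition~11.2.1]{amrw1} line by line. Recall that in $\FM(\fh,W)$ the object $\wT_s$ carries a two-step $\wnabla$-filtration with graded pieces $\wnabla_\varnothing\langle1\rangle$ and $\wnabla_s$ (up to shift), and that this filtration splits after applying $\Loc$, so that $\Loc(\wT_s)\cong\Loc(\wnabla_\varnothing\langle1\rangle)\oplus\Loc(\wnabla_s)$. The morphism $\phi_s=\varkappa^2_s$ is constructed precisely so that its two components compute the projections of $\wT_s$ onto the two ends of this filtration. Now \eqref{eq:HHom loc costandard} identifies each of the spaces $\HHom_\loc(\wnabla_\ux,\wnabla_\uy)$ with a free $Q^\vee$-module of rank $0$ or $1$; expressing $\Loc(\phi_s)$ as a matrix over $Q^\vee$ with respect to the splitting above, one reads off that it is triangular with invertible diagonal entries, hence an isomorphism. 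This is the single-colour instance of \cite[Corollary~11.2.2]{amrw1}, and the argument carries over verbatim.

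For a general expression $\uw=(s_1,\dots,s_n)$, \eqref{eq:phi w} gives $\phi_\uw=\phi_{s_1}\wstar\cdots\wstar\phi_{s_n}$, and each $\Loc(\phi_{s_i})$ is an isomorphism by the previous step. It then suffices to see that a $\wstar$-convolution of finitely many morphisms, each becoming invertible under $\Loc$, again becomes invertible under $\Loc$. For this I would pass to the localized category: as is shown in \cite[\S11]{amrw1}, on $\Loc(\fh,W)$ the operation $\wstar$ \emph{is} an honest bifunctor (the interchange law being straightforward to check there because all the morphism spaces are as small as \eqref{eq:HHom loc costandard} dictates, and the $\wnabla_s$ are invertible objects). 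Hence $\Loc(\phi_\uw)=\Loc(\phi_{s_1})\wstar\cdots\wstar\Loc(\phi_{s_n})$ is a composite of morphisms of the form $\id\wstar\cdots\wstar\Loc(\phi_{s_i})\wstar\cdots\wstar\id$, each of which is an isomorphism, and is therefore an isomorphism.

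The step I expect to be the main obstacle is this last one. On $\FM(\fh,W)$ itself we do not know $\wstar$ to be bifunctorial — this is exactly the assumption of \S\ref{ss:bifunctoriality} — so there is no way to decompose $\phi_\uw$ as a composite of elementary convolutions \emph{before} localizing. The resolution, which is entirely internal to \cite[\S11]{amrw1} and does not use the assumption of \S\ref{ss:bifunctoriality}, is that precisely the amount of functoriality one needs becomes available once the roots are inverted. The single-colour base case, by contrast, is a routine computation once the explicit description of $\varkappa^2_s$ and the localized splitting of $\wT_s$ are available.
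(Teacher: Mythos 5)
Your proposal reconstructs the argument that the paper invokes by reference (it offers no written proof, simply citing \cite[Corollary~11.2.2]{amrw1} with the comment that ``the same arguments'' apply). Your two-step structure --- base case $\uw=s$ via \cite[Proposition~11.2.1]{amrw1}, then bootstrapping via the multiplicativity~\eqref{eqn:phi-product} --- is exactly what that reference does, so the approach is the same.

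Your remark about the interchange law is the one point worth dwelling on, and you have it essentially right. The bifunctoriality assumption of~\S\ref{ss:bifunctoriality} is in force throughout Section~\ref{sec:functor V}, but it is formulated only for morphisms between objects of $\TBS(\fh,W)$; the targets of the $\phi_{s_i}$ involve the objects $\wnabla_{\ux}$, which are not free-monodromic tiltings, so that assumption does \emph{not} license writing $\Loc(\phi_{s_1})\wstar\cdots\wstar\Loc(\phi_{s_n})$ as a composite of elementary convolutions directly in $\FM(\fh,W)$. The resolution --- that after $\Loc$ the small $\Hom$-spaces of~\eqref{eq:HHom loc costandard} and the (local) invertibility of the $\wnabla_s$ make the interchange law available unconditionally --- is precisely the mechanism at work in \cite[\S 11]{amrw1}. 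In fact, establishing the interchange law first in the localized category and then transporting it back is the whole point of that chapter, which is why \cite[Corollary~11.2.2]{amrw1} must be (and is) proved without the global bifunctoriality assumption. So your proposal is correct and matches the intended argument; the only caveat is that one should not present the localized interchange law as ``straightforward to check'' without actually pointing to the relevant verifications in \cite[\S 11.2--11.3]{amrw1}, since that is where the real content lies.
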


%

\subsection{Construction of the functor \texorpdfstring{$\V$}{V}}
\label{ss:V}

Recall the functor $(-)^\wedge$ considered in~\S\ref{ss:gradings}. For $\cF \in \TBS(\fh,W)$
we set
\[
\V(\cF):= 
\left( \HHom_{\FM^{\Kar}(\fh,W)}(\wP, \cF) \right)^\wedge.
\]
We observe that $\V\circ\langle1\rangle= \langle -1 \rangle\circ\V$ by definition and~\eqref{eqn:wedge-shift}.
This defines a functor from $\TBS(\fh,W)$ to $R^\wedge\mhyphen\Mod^\Z\mhyphen R^\wedge$.
Our goal in this subsection is to show that this functor factors through a monoidal functor from $\TBS(\fh,W)$ to $\AbeBS(\fh^*,W)$.

If $\uw=(s_1, \cdots, s_n)$ is an expression, we denote by $S_\uw$ the multiset of subexpressions of $\uw$, i.e.~expressions which can be obtained from $\uw$ by removing some letters. (The multiplicities come from the fact that the same expression can sometimes be obtained by removing letters from $\uw$ in several ways.) Then we have
\[
(\wnabla_\varnothing\langle1\rangle\oplus\wnabla_{s_1})\wstar\cdots\wstar
(\wnabla_\varnothing\langle1\rangle\oplus\wnabla_{s_n})\cong \bigoplus_{\uv \in S_{\uw}} \wnabla_{\uv} \langle \ell(\uw)-\ell(\uv)\rangle.
\]
For $u\in W$, we set 
\[
\wT_\uw^u:=\bigoplus_{\substack{\uv \in S_{\uw} \\ \pi(\uv)=u}}\wnabla_{\uv}\langle \ell(\uw)-\ell(\uv)\rangle.
\]
Then $\phi_\uw$ defines a morphism
\[
\wT_{\uw} \to \bigoplus_{u \in W} \wT_{\uw}^u,
\]
and if $\uv$ and $\uw$ are expressions, for $x,y \in W$ we have
\begin{equation}
\label{eq:HHom loc Tu uw}
\HHom_{\loc}(\wT_\uv^x,\wT_\uw^y)=0  \quad \text{unless $x=y$}
\end{equation}
by~\eqref{eq:HHom loc costandard}.

If $\uw$ is an expression and $u \in W$,
we consider the $(R^\wedge,Q^\wedge)$-bimodule
\[
\V(\wT_\uw)_{Q^\wedge}^u := \left( \bigoplus_{n \in \Z} \Hom_{\FM^{\Kar}(\fh,W)}(\wP,\wT_\uw^u \langle -n \rangle) \right)^\wedge \otimes_{R^\wedge} Q^\wedge,
\]
and the morphism of graded $(R^\wedge,Q^\wedge)$-bimodules
\[
\xi_{\V(\wT_{\uw})}:\V(\wT_{\uw})\otimes_{R^\wedge}Q^\wedge \to \bigoplus_{u\in W}\V(\wT_\uw)_{Q^\wedge}^u
\]
induced by the assignment $g\mapsto\phi_\uw\circ g$ for $g \in \Hom_{\rT(\fh,W)}(\wP, \wT_{\uw} \langle -n \rangle)$. It follows from Lemma~\ref{le:phis} that $\xi_{\V(\wT_{\uw})}$ is an isomorphism.

Recall the definition of the category $\cC(\fh^*,W)$ in \S\ref{ss:Abe}. 

\begin{lema}
\phantomsection
\label{lem:V-factorization-Abe}
\begin{enumerate}
 \item 
 \label{it:V-factorization-Abe-1}
 For any expression $\uw$, the triple
 \[
 \left( \V(\wT_\uw),(\V(\wT_\uw)_{Q^\wedge}^u)_{u\in W},\xi_{\V(\wT_{\uw})} \right)
 \]
 is an object of $\cC(\fh^*,W)$.
 \item 
  \label{it:V-factorization-Abe-2}
 For any expressions $\uv,\uw$ and any $\varphi\in\HHom_{\FM(\fh,W)}(\wT_\uv,\wT_\uw)$,
 $\V(\varphi)$ is a morphism in $\cC(\fh^*,W)$ from the triple $(\V(\wT_\uv),(\V(\wT_\uv)_{Q^\wedge}^u)_{u\in W},\xi_{\V(\wT_{\uv})})$ to the triple $(\V(\wT_\uw),(\V(\wT_\uw)_{Q^\wedge}^u)_{u\in W},\xi_{\V(\wT_{\uw})})$.
\end{enumerate}
\end{lema}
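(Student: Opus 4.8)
The statement asks me to verify two things about the data $(\V(\wT_\uw),(\V(\wT_\uw)_{Q^\wedge}^u)_{u\in W},\xi_{\V(\wT_\uw)})$: first, that it is a genuine object of $\cC(\fh^*,W)$, and second, that $\V$ applied to any free-monodromic morphism respects the extra ``$Q$-structure'' that defines morphisms in $\cC(\fh^*,W)$. Both are essentially bookkeeping, once the correct inputs are identified, so the proof should be organized around isolating those inputs rather than around computation.

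For \eqref{it:V-factorization-Abe-1}, I must check three conditions from the definition of $\cC(\fh^*,W)$ in \S\ref{ss:Abe}. (a) $\V(\wT_\uw)$ is a graded $R^\wedge$-bimodule: this is immediate, since $\bigoplus_n\Hom_{\rT(\fh,W)}(\wP,\wT_\uw\langle n\rangle)$ is graded free over $R^\vee$ on the right by Proposition~\ref{prop: analogous of Hom FM between tilting}, carries a left $R^\vee$-action via $\mu_{\wP}$ from \S\ref{ss:FM}, and the $(-)^\wedge$ operation of \S\ref{ss:gradings} turns these into commuting $R^\wedge$-actions with the correct grading. (b) Each $\V(\wT_\uw)_{Q^\wedge}^u$ is a graded $(R^\wedge,Q^\wedge)$-bimodule on which the left $R^\wedge$-action is twisted by $u$: here the point is that $\wT_\uw^u$ is a direct sum of convolutions $\wnabla_{\uv}$ with $\pi(\uv)=u$, and one reads off from the construction of $\wnabla_s$ (via \cite[\S7.4]{amrw2}) and from \eqref{eq:HHom loc costandard} that $\HHom_\loc(\wP,\wnabla_{\uv})$ is a rank-one $Q^\vee$-module on which the two monodromy actions differ by $\pi(\uv)=u$; applying $(-)^\wedge$ transfers this to $Q^\wedge$ and $R^\wedge$. (c) $\xi_{\V(\wT_\uw)}$ is an isomorphism of graded $(R^\wedge,Q^\wedge)$-bimodules: this is exactly the content of the sentence following the definition of $\xi_{\V(\wT_\uw)}$ in the excerpt, which in turn rests on Lemma~\ref{le:phis} (that $\Loc(\phi_\uw)$ is an isomorphism), after noting that $\bigoplus_u\V(\wT_\uw)^u_{Q^\wedge}$ is canonically $\HHom_\loc(\wP,\bigoplus_u\wT_\uw^u)^\wedge$ in the appropriate degrees and that $-\otimes_{R^\wedge}Q^\wedge$ agrees with the localization used to define $\HHom_\loc$. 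The only subtlety is grading and degree-shift compatibility, which one tracks using \eqref{eqn:wedge-shift} and the identification of $\langle 1\rangle$ on both sides; this is routine but must be done carefully.

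For \eqref{it:V-factorization-Abe-2}, a morphism $\varphi:\wT_\uv\to\wT_\uw$ induces $\V(\varphi)$ by post-composition, which is clearly a morphism of graded $R^\wedge$-bimodules. The condition to check is that $\bigl(\xi_{\V(\wT_\uw)}\circ(\V(\varphi)\otimes_{R^\wedge}Q^\wedge)\circ\xi_{\V(\wT_\uv)}^{-1}\bigr)$ preserves the summands $\V(\wT_\uv)^u_{Q^\wedge}\to\V(\wT_\uw)^u_{Q^\wedge}$, i.e.\ is ``block-diagonal'' with respect to the $W$-indexing. Unwinding the definitions, this composite is induced by $g\mapsto\phi_\uw\circ\varphi\circ(\phi_\uv)^{-1}$ in the localized category, i.e.\ it is the localization of $\varphi$ conjugated into $\bigoplus_u\HHom_\loc(\wP,\wT_\uw^u)$. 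The block-diagonality is then forced by \eqref{eq:HHom loc Tu uw}: $\HHom_\loc(\wT_\uv^x,\wT_\uw^y)=0$ unless $x=y$, so the conjugated morphism cannot mix different $u$'s. This is the cleanest step once the framework is in place.

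\textbf{Main obstacle.} The real work is not any single verification but keeping the three interlocking gradings straight: the cohomological/internal bigrading on $\HHom_{\FM(\fh,W)}$, the $\langle 1\rangle$-twist built into the definition of $\wT_\uw^u$ (the shift $\langle\ell(\uw)-\ell(\uv)\rangle$), and the degree-reversal imposed by $(-)^\wedge$. One must check that $\xi_{\V(\wT_\uw)}$ is homogeneous of degree $0$ as a map of $(R^\wedge,Q^\wedge)$-bimodules and that, after applying $(-)^\wedge$, the left action on $\V(\wT_\uw)^u_{Q^\wedge}$ is genuinely the $u$-twist and not $u^{-1}$ or $w_0u$; getting the twisting element right requires being careful about whether the relevant monodromy acts through the source or the target of $\phi_\uw$. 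Everything else — freeness, the isomorphism $\xi$, block-diagonality — is a direct appeal to Proposition~\ref{prop: analogous of Hom FM between tilting}, Lemma~\ref{le:phis}, and \eqref{eq:HHom loc costandard}--\eqref{eq:HHom loc Tu uw}.
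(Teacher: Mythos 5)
Your proposal is correct and follows essentially the same route as the paper: for part~\eqref{it:V-factorization-Abe-1} the substantive check is the twist compatibility $m \cdot f = u(f) \cdot m$, which the paper obtains directly from the identity $x \wstar g = g \wstar \pi(\uy)^{-1}(x)$ of~\cite[Lemma~7.4.1]{amrw1} (your appeal to ``the two monodromy actions differing by $\pi(\uv)$'' is the same fact, though your mention of a rank-one claim is neither needed nor directly given by~\eqref{eq:HHom loc costandard}); for part~\eqref{it:V-factorization-Abe-2} you correctly identify that the conjugated map $\phi_\uw \circ \varphi \circ \phi_\uv^{-1}$ lives in $\Loc'(\fh,W)$ and must be block-diagonal by~\eqref{eq:HHom loc Tu uw}, which is exactly the paper's argument (the paper additionally spells out the reduction from a general element of $\V(\wT_\uv)^u_{Q^\wedge}$ to one of the form $\xi_{\V(\wT_\uv)}(f)$ by clearing a denominator, a step worth making explicit).
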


\begin{proof}
\eqref{it:V-factorization-Abe-1}
Let $\uw$ be an expression, and let $x\in R^\vee$. For any expression $\uy$ and any $g\in\HHom_{\FM^{\Kar}(\fh,W)}(\wP,\wnabla_{\uy})$ 
we have
\[
x\wstar g=g\circ\mu_{\wnabla_{\uy}}(x)=g\wstar\pi(\uy)^{-1}(x)
\]
by \cite[Lemma 7.4.1]{amrw1}. Therefore the left and right $R^\wedge$-action on $\V(\wT_\uw)_{Q^\wedge}^u$ satisfy the compatibility property in the definition of $\cC(\fh^*,W)$ for all $u\in W$. Hence our triple is an object of $\cC(\fh^*,W)$, as desired.

\eqref{it:V-factorization-Abe-2}
Fix $\uv,\uw$ and $\varphi$ as in the statement. What
we have to prove is that for any $u \in W$ and any $\widetilde{f}\in\V(\wT_\uv)^u_{Q^\wedge}$
the element
\[
h:=\left(\xi_{\V(\wT_{\uw})}\circ(\V(\varphi)\otimes_{R^\wedge}1)\circ(\xi_{\V(\wT_{\uv})})^{-1}\right)(\widetilde{f})
\]
belongs to $\V(\wT_\uw)^u_{Q^\wedge}$.

First, let us assume that $\widetilde{f}=\xi_{\V(\wT_{\uv})}(f)$
for some $f\in\V(\wT_\uv)$. In this case, $h=\phi_\uw\circ\varphi\circ f$ in $\Loc'(\fh,W)$. Since $\phi_\uv$ is an isomorphism in this category, we have
\[
h=(\phi_\uw\circ\varphi\circ\phi_\uv^{-1})\circ(\phi_\uv\circ f)=(\phi_\uw\circ\varphi\circ\phi_\uv^{-1})\circ\xi_{\V(\wT_{\uv})}(f)
\]
and hence $h\in\V(\wT_\uw)^u_{Q^\wedge}$ by our assumption on $f$ and \eqref{eq:HHom loc Tu uw}, see Figure \ref{figure:V varphi}.

\begin{figure}[h]
\begin{tikzpicture}[baseline=(current  bounding  box.center),scale=.8,every node/.style={scale=0.8}]
\node (FM) {
\begin{tikzpicture}
\node (wP) {$\wP$};
\node[right=1cm of wP] (wTv) {$\wT_\uv$};
\node[right=1.5cm of wTv] (wTw) {$\wT_\uw$};
\node[below=1cm of wTv] (wTvu) {$\displaystyle\bigoplus_{u\in W}\wT_\uv^u$};
\node[below=1cm of wTw] (wTwu) {$\displaystyle\bigoplus_{u\in W}\wT_\uw^u$};
\draw[->] (wP) to node[above] {$f$} (wTv);
\draw[->] (wTv) to node[above] {$\varphi$} (wTw);
\draw[->] (wTv) to node[left] {$\phi_\uv$} (wTvu);
\draw[->] (wTw) to node[right] {$\phi_\uw$} (wTwu);
\end{tikzpicture}
};
\node[right=2cm of FM] (Loc) {
\begin{tikzpicture}
\node (wP) {\vphantom{$\wT_\uw$}$\wP$};
\node[rotate=90] at (1.2,-1) {$\circlearrowright$};
\node[right=1.5cm of wP] (wTv) {$\wT_\uv$};
\node[below=1cm of wP] (Tvu) {\vphantom{$\displaystyle\bigoplus_{u\in W}\wT_\uv^u$}$\wT_\uv^u$};
\node[right=2.5cm of wTv] (wTw) {$\wT_\uw$};
\node[rotate=90] at (3.7,-1) {$\circlearrowright$};
\node[below=1cm of wTv] (wTvu) {$\displaystyle\bigoplus_{u\in W}\wT_\uv^u$};
\node[below=1cm of wTw] (wTwu) {$\displaystyle\bigoplus_{u\in W}\wT_\uw^u$};
\draw[->] (wP) to node[above] {$f$} (wTv);
\draw[->,dashed] (wP) to node[left] {$\xi_{\V_{\wT_\uv}}(f)$} (Tvu);
\draw[->,dashed] (wP) to (Tvu);
\draw[right hook->,dashed] (Tvu) to (wTvu);
\draw[->] (wTv) to node[above] {$\varphi$} (wTw);
\draw[->] (wTv) to node[left] {$\phi_\uv$} (wTvu);
\draw[->] (wTw) to node[right] {$\phi_\uw$} (wTwu);
\draw[->,dashed] (wTvu) to node[below] {\tiny$\phi_\uw\circ\varphi\circ\phi_\uv^{-1}$} (wTwu);
\end{tikzpicture}
};
\draw[->,decorate, decoration={snake,pre length=.5cm,post length=.5cm}, shorten <=.5cm, shorten >= .5cm] (FM) to node[above=5pt] {$\Loc'$} (Loc);
\end{tikzpicture}
\caption{The diagram on the left-hand side is in $\FM^{\Kar}(\fh,W)$ and the other one is in $\Loc'(\fh,W)$ where the dashed morphisms exist and their composition equals $h$.}
\label{figure:V varphi}
\end{figure}

Now we consider the general case.
There exist $a\in R^\wedge$ and $f\in\V(\wT_\uv)$ such that $\widetilde{f}=\xi_{\V(\wT_\uv)}(f\frac{1}{a})$. Then $\xi_{\V(\wT_\uv)}(f)\in\V(\wT_\uv)^u_{Q^\wedge}$ since $\xi_{\V(\wT_\uv)}$ is morphism of right $Q^\wedge$-modules, hence $ha \in \V(\wT_\uw)^u_{Q^\wedge}$, which implies that $h$ also belongs to $\V(\wT_\uw)^u_{Q^\wedge}$.
\end{proof}

Lemma~\ref{lem:V-factorization-Abe} shows that the functor $\V$ factors through a functor
\[
\TBS(\fh,W) \to \cC(\fh^*,W),
\]
which will be denoted similarly. We will next show that this functor can be endowed with a monoidal structure. For that,
we introduce the morphism
\begin{equation}
\label{eq:gamma}
\gamma_\varnothing : B^\wedge_\varnothing = R^\wedge \to \V(\wT_\varnothing)=\V(\wT_1) 
\end{equation}
given by the assignment $x\mapsto\xi \wstar x$ (where $\xi$ is as in~\eqref{eq:xi}), and the bifunctorial morphism 
\[
\beta_{\cF,\cG} :\V(\cF)\otimes_{R^\wedge}\V(\cG)\to \V(\cF\wstar \cG)
\]
defined by $\beta_{\cF,\cG}(f\otimes g)=(f\wstar g)\circ\delta\in\V(\cF\wstar\cG)_{m+n}$ for all $f\in\V(\cF)_m$ and $g\in\V(\cG)_n$ (where $\delta$ is as in Proposition~\ref{prop: wP coalgebra}).

\begin{prop}
\label{prop:V is monoidal} 
The triple $(\V,\beta,\gamma_\varnothing)$ is a monoidal functor from $\TBS(\fh,W)$ to $\cC(\fh^*,W)$.
\end{prop}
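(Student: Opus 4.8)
The plan is to recognise Proposition~\ref{prop:V is monoidal} as a shadow of the elementary fact that, for a coalgebra $(\wP,\delta,\xi)$ in a monoidal category, the functor $\Hom(\wP,-)$ carries a lax monoidal structure with structure maps $f\otimes g\mapsto(f\wstar g)\circ\delta$: here the coalgebra is the one provided by Proposition~\ref{prop: wP coalgebra}, and $\V$ is the $(-)^\wedge$-twist of $\bigoplus_{n}\Hom_{\rT(\fh,W)}(\wP,-\langle n\rangle)$. Three things must be verified: (i) that $\beta_{\cF,\cG}$ and $\gamma_\varnothing$ are morphisms in $\cC(\fh^*,W)$, and not merely of graded $R^\wedge$-bimodules; (ii) the associativity (pentagon) axiom for $\beta$; and (iii) the two unit (triangle) axioms relating $\beta$ and $\gamma$. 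Points (ii) and (iii) become formal once one is allowed to manipulate $\wstar$ of morphisms freely, which is exactly what the interchange law~\eqref{eqn:exchange}, assumed in~\S\ref{ss:bifunctoriality}, provides; point (i) is where the localization of~\S\ref{ss:V} enters.

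For (ii): evaluating the two composites on $f\otimes g\otimes h$ and using the interchange law together with the associativity of $\wstar$ on morphisms yields
\[
\beta_{\cF\wstar\cG,\cH}\bigl(\beta_{\cF,\cG}(f\otimes g)\otimes h\bigr)=(f\wstar g\wstar h)\circ(\delta\wstar\id_{\wP})\circ\delta
\]
and
\[
\beta_{\cF,\cG\wstar\cH}\bigl(f\otimes\beta_{\cG,\cH}(g\otimes h)\bigr)=(f\wstar g\wstar h)\circ(\id_{\wP}\wstar\delta)\circ\delta ,
\]
which agree by the coassociativity of $\delta$ (Proposition~\ref{prop: wP coalgebra}). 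For (iii): the counit identities $(\id_{\wP}\wstar\xi)\circ\delta=\id_{\wP}$ and $(\xi\wstar\id_{\wP})\circ\delta=\id_{\wP}$ (modulo the unit isomorphism $\wP\wstar\wT_1\simeq\wP$), combined with the interchange law and the fact that $\gamma_\varnothing$ is built from $\xi$ by~\eqref{eq:gamma}, show that $\beta_{-,\wT_1}\circ(\id\otimes\gamma_\varnothing)$ and $\beta_{\wT_1,-}\circ(\gamma_\varnothing\otimes\id)$ are the right and left unitors of $\cC(\fh^*,W)$. We also note that $\gamma_\varnothing$ is an isomorphism: its source and target are canonically $R^\wedge$, and $\gamma_\varnothing(1)=\xi$ is a free generator of $\bigoplus_{n}\Hom_{\rT(\fh,W)}(\wP,\wT_1\langle n\rangle)\cong R^\vee$; see~\eqref{eq:xi} and~\eqref{eq:gamma}. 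Bifunctoriality of $\beta$ is checked the same way and is in any case built into its definition.

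For (i), by additivity one may assume $\cF=\wT_\uv$ and $\cG=\wT_\uw$, so that $\cF\wstar\cG=\wT_{\uv\uw}$ and $\phi_{\uv\uw}=\phi_\uv\wstar\phi_\uw$ by~\eqref{eqn:phi-product}. Recall from~\S\ref{ss:V} that the object-of-$\cC(\fh^*,W)$ structure on $\V(\wT_\uw)$ is given, after $\otimes_{R^\wedge}Q^\wedge$, by the isomorphism $\xi_{\V(\wT_\uw)}$ of post-composition with (the localization of) $\phi_\uw$, whose behaviour on $W$-components is controlled by the vanishing~\eqref{eq:HHom loc costandard}, exactly as in the proof of Lemma~\ref{lem:V-factorization-Abe}; the same holds for $\wT_\uv$ and $\wT_{\uv\uw}$, hence for $\V(\wT_\uv)\otimes_{R^\wedge}\V(\wT_\uw)$. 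For $f\in\V(\wT_\uv)$ and $g\in\V(\wT_\uw)$ the interchange law gives
\[
\Loc(\phi_{\uv\uw})\circ\beta_{\cF,\cG}(f\otimes g)=\bigl((\Loc(\phi_\uv)\circ f)\wstar(\Loc(\phi_\uw)\circ g)\bigr)\circ\Loc(\delta) .
\]
Hence, if $f$ lies in the $x$-component and $g$ in the $y$-component --- i.e.~if $\Loc(\phi_\uv)\circ f$, resp.~$\Loc(\phi_\uw)\circ g$, is supported on the summands $\wnabla_{\uv'}$ with $\pi(\uv')=x$, resp.~$\wnabla_{\uw'}$ with $\pi(\uw')=y$ --- then, using $\wnabla_{\uv'}\wstar\wnabla_{\uw'}=\wnabla_{\uv'\uw'}$, the right-hand side is supported on the summands $\wnabla_{\uv''}$ with $\pi(\uv'')=xy$; post-composition with $\Loc(\delta)$ cannot change the supporting $W$-stratum, by~\eqref{eq:HHom loc costandard} (equivalently~\eqref{eq:HHom loc Tu uw}). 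This is exactly the assertion that $\beta_{\cF,\cG}$ respects the $W$-decompositions, i.e.~is a morphism in $\cC(\fh^*,W)$. For $\gamma_\varnothing$ there is nothing to prove, since $\wT_1=\wT_\varnothing$ and $\phi_\varnothing=\id$, so that $R^\wedge$ and $\V(\wT_1)$ are both concentrated in the $W$-component of the identity.

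The main obstacle is the bookkeeping in step (i): one has to be sure that the single localization identity~\eqref{eq:HHom loc costandard} genuinely forces each of the manipulations above to stay within one $W$-stratum. This, however, is precisely the mechanism already deployed in the proof of Lemma~\ref{lem:V-factorization-Abe} for the action of $\V$ on morphisms, so it should transcribe with only notational changes; everything else is a routine translation of the coalgebra axioms for $\wP$, legitimate thanks to the interchange law of~\S\ref{ss:bifunctoriality}. (One can moreover verify, though it is not needed here, that each $\beta_{\cF,\cG}$ is an isomorphism: both sides are graded-free over $R^\vee$ after untwisting, and the reduction of $\beta$ modulo $V$, computed through $\For^\FM_\LM$ and Proposition~\ref{prop: analogous of Hom FM between tilting}, is an isomorphism because $\cP$ is the projective cover of $\cT_1$; one then concludes with Lemma~\ref{le:technical}.)
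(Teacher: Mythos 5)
Your proposal is correct and follows essentially the same route as the paper: the monoidal‐functor axioms are reduced to the coalgebra axioms of $(\wP,\delta,\xi)$ via the interchange law (the paper simply cites~\cite[Proposition~3.7]{amrw2} for this part), and the new content — that $\beta_{\cF,\cG}$ respects the $W$-decompositions — is verified by the same localization computation, reducing to Bott–Samelson objects, pulling $\phi_{\uv\uw}=\phi_\uv\wstar\phi_\uw$ past $f\wstar g$ using the interchange law, and invoking $\wnabla_{\uv'}\wstar\wnabla_{\uw'}\cong\wnabla_{\uv'\uw'}$ together with~\eqref{eq:HHom loc Tu uw}. One small slip: in the final step of (i) you mean \emph{pre}-composition with $\Loc(\delta)$, and the reason the $W$-stratum is unaffected is simply that the stratum is read off from the target $\wT_\uv^x\wstar\wT_\uw^y\subset\wT_{\uv\uw}^{xy}$, not from any property of $\delta$ itself.
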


\begin{proof}
We first observe that the proof of~\cite[Proposition 3.7]{amrw2} shows that $(\V,\beta,\gamma_\varnothing)$ is a monoidal functor from $\TBS(\fh,W)$ to $R^\wedge\mhyphen\Mod^\Z\mhyphen R^\wedge$. In fact, all the ingredients of this proof have been repeated above, except for~\cite[Lemma 2.6]{amrw2}, which can be proved by the same considerations using Proposition~\ref{prop:L1 Delta w nabla w} instead of~\cite[Lemma 4.9]{ar}.

It is clear also that $\gamma_\varnothing$ is a morphism in $\cC(\fh^*,W)$, so all that remains to be justified is that
$\beta_{\cF,\cG}$ is a morphism in $\cC(\fh^*,W)$ for any $\cF,\cG \in \TBS(\fh,W)$, i.e.~that for any expressions $\uv,\uw$ the morphism
\[
\beta_{\uv,\uw}:= \beta_{\wT_{\uv},\wT_{\uw}} : \V(\wT_\uv)\otimes_{R^\wedge}\V(\wT_\uw)\to\V(\wT_{\uv}\wstar\wT_{\uw})
\]
is a morphism in $\cC(\fh^*,W)$. Let $x,y\in W$, $\widetilde{f}\in\V(\wT_\uv)^x_{Q^\wedge}$ and $\widetilde{g}\in\V(\wT_\uw)^y_{Q^\wedge}$. What we have to prove that the element
\[
h:=
\left(\xi_{\V(\wT_{\uv\uw})}\circ(\beta_{\uv,\uw} \otimes_{R^\wedge}1)\circ(\xi_{\V(\wT_{\uv})}^{-1}\otimes_{Q^\wedge}\xi_{\V(\wT_{\uw})}^{-1})\right)(\widetilde{f}\otimes_{Q^\wedge}\widetilde{g})
\]
belongs to $\V(\wT_{\uv\uw})^{xy}_{Q^\wedge}$.
As in the proof of Lemma~\ref{lem:V-factorization-Abe}\eqref{it:V-factorization-Abe-2}, we can assume that there exist $f\in\V(\wT_\uv)$ and $g\in\V(\wT_\uw)$ such that
$\widetilde{f}=\xi_{\V(\wT_{\uv})}(f)\in\V(\wT_\uv)^x_{Q^\wedge}$ 
 and $\widetilde{g}=\xi_{\V(\wT_{\uw})}(g)\in\V(\wT_\uw)^y_{Q^\wedge}$.
 Thus, in $\Loc'(\fh,W)$ we have
\begin{align*}
h=
\xi_{\V(\wT_{\uv\uw})}\left((\beta_{\uv,\uw} \otimes_{R^\wedge}1)((f\otimes_{R^\wedge}1)\otimes_{Q^\wedge}(g\otimes_{R^\wedge}1))\right)&\overset{(a)}{=}\phi_{\uv\uw}\circ (f\wstar g)\circ\delta\\
&\overset{(b)}{=}(\phi_{\uv}\wstar\phi_{\uw})\circ (f\wstar g)\circ\delta\\
&\overset{(c)}{=}(\phi_{\uv}\circ f)\wstar(\phi_{\uw}\circ g)\circ\delta\\
&\overset{(d)}{=}(\xi_{\V(\wT_\uv)}(f)\wstar\xi_{\V(\wT_\uw)}(g))\circ\delta.
\end{align*}
Here:
\begin{itemize}
\item
$(a)$ and $(d)$ follow from the definitions of $\xi_{\V(\wT_{\uv\uw})}$ and $\beta$;
\item
$(b)$ follows from~\eqref{eqn:phi-product};
\item
$(c)$ follows from the interchange law, see~\S\ref{ss:bifunctoriality}.
\end{itemize}
Therefore $h\in\V(\wT_{\uv\uw})^{xy}_{Q^\wedge}$ by our assumptions and \eqref{eq:HHom loc Tu uw}, see Figure \ref{figure:beta in C'}. 
\end{proof}

\begin{figure}[H]
\begin{tikzpicture}[baseline=(current  bounding  box.center),scale=.9,every node/.style={scale=0.9}]
\node (wP) {$\wP$};
\node[right=1cm of wP] (wPwP) {$\wP\wstar\wP$};
\node[right=3cm of wPwP] (wTv) {$\wT_\uv\wstar\wT_\uw$};
\node[below=1cm of wPwP] (wTvwTw) {$\vphantom{\displaystyle\left(\bigoplus_{u\in W}\wT_\uv^u\right)\wstar\left(\bigoplus_{u\in W}\wT_\uv^u\right)}\wT_\uv^x\wstar\wT_\uw^y$};
\node[right=3cm of wTv] (wTw) {$\wT_{\uv\uw}$};
\node[below=1cm of wTv] (wTvu) {$\displaystyle\left(\bigoplus_{u\in W}\wT_\uv^u\right)\wstar\left(\bigoplus_{u\in W}\wT_\uw^u\right)$};
\node[below=1cm of wTw] (wTwu) {$\vphantom{\displaystyle\left(\bigoplus_{u\in W}\wT_\uv^u\right)\wstar\left(\bigoplus_{u\in W}\right)}\displaystyle\bigoplus_{u\in W}\wT_{\uv\uw}^u$};
\draw[->] (wP) to node[above] {$\delta$} (wPwP);
\draw[->] (wPwP) to node[above] {$f\wstar g$} (wTv);
\draw[->] (wTv) to node[left] {$\phi_\uv\wstar\phi_\uw$} (wTvu);
\draw[->] (wTw) to node[right] {$\phi_{\uv\uw}$} (wTwu);
\draw[->,dashed] (wPwP) to node[left] {\small$\xi_{\V(\wT_\uv)}(f)\wstar\xi_{\V(\wT_\uw)}(g)$} (wTvwTw);
\draw[right hook->,dashed] (wTvwTw) to  (wTvu);

\node[rotate=90] at (4,-1) {$\circlearrowright$};

\node[rotate=90] at (8.5,-1) {$\circlearrowright$};

\draw[transform canvas={yshift=-1pt},-] (wTvu) to (wTwu);
\draw[transform canvas={yshift=1pt},-] (wTvu) to (wTwu);

\draw[transform canvas={yshift=-1pt},-] (wTv) to (wTw);
\draw[transform canvas={yshift=1pt},-] (wTv) to  (wTw);
\end{tikzpicture}
\caption{The dashed morphisms are in $\Loc'(\fh,W)$ and the others in $\FM^{\Kar}(\fh,W)$.}
\label{figure:beta in C'}
\end{figure}

Recall the morphism $\gamma_s'$ defined in~\eqref{eq:gamma s}.

\begin{lema}
\label{le:gamma iso}
For any $s \in S$, the morphism $\gamma_s := (\gamma_s')^\wedge$ defines an isomorphism $B_s^\wedge \simto \V(\wT_s)$ in $\cC(\fh^*,W)$.
\end{lema}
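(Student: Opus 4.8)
The plan is to verify that $\gamma_s$ is (a) a morphism of graded $R^\wedge$-bimodules, (b) a morphism in $\cC(\fh^*,W)$ in the sense of~\S\ref{ss:Abe}, and (c) an isomorphism, each step using the structural facts collected above. Recall that $\gamma_s'$ in~\eqref{eq:gamma s} is by construction an isomorphism of graded $R^\vee$-bimodules $R^\vee\otimes_{(R^\vee)^s}R^\vee\langle1\rangle \simto \bigoplus_m\Hom^\bullet_{\rT(\fh,W)}(\wP,\wT_s\langle -m\rangle)$ sending $u_s = 1\otimes 1$ to $\zeta_s$, so applying $(-)^\wedge$ (which, as recalled in~\S\ref{ss:gradings}, is an equivalence $R^\vee\mhyphen\Mod^\Z\mhyphen R^\vee \to R^\wedge\mhyphen\Mod^\Z\mhyphen R^\wedge$ intertwining $\langle1\rangle$ with $\langle-1\rangle$ via~\eqref{eqn:wedge-shift}) already gives (a) and (c): $\gamma_s = (\gamma_s')^\wedge$ is an isomorphism of graded $R^\wedge$-bimodules from $(R^\vee\otimes_{(R^\vee)^s}R^\vee\langle1\rangle)^\wedge$ to $\V(\wT_s)$. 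The only subtlety for (c) is to identify the source with $B_s^\wedge$; but $B_s = R^\wedge\otimes_{(R^\wedge)^s}R^\wedge\langle-1\rangle$ by~\eqref{eq:Bs in Abe} and the conventions of~\S\ref{ss:Abe}, and under the dictionary between $\fh$ and $\fh^*$ the ring $(R^\vee)^s$ plays for $\fh^*$ the role $R^s$ plays for $\fh$ (both are the $s$-invariants inside $\operatorname{Sym}$ of the relevant space), so $(R^\vee\otimes_{(R^\vee)^s}R^\vee\langle1\rangle)^\wedge = R^\wedge\otimes_{(R^\wedge)^s}R^\wedge\langle-1\rangle = B_s^\wedge$ as graded $R^\wedge$-bimodules.

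It therefore remains to check (b): that $\gamma_s$ respects the extra data, i.e.\ that the isomorphism $\V(\wT_s)\otimes_{R^\wedge}Q^\wedge \simto B_s^\wedge\otimes_{R^\wedge}Q^\wedge$ induced by $\gamma_s^{-1}$ carries $\V(\wT_s)^u_{Q^\wedge}$ to $(B_s)^u_{Q^\wedge}$ for each $u\in W$; since both sides are supported on $\{e,s\}$ it suffices to treat these two values. By definition $\V(\wT_s)^u_{Q^\wedge}$ is the localization of $\big(\bigoplus_n\Hom_{\FM^{\Kar}}(\wP,\wT_s^u\langle n\rangle)\big)^\wedge$, where $\wT_s^e = \wnabla_\varnothing\langle1\rangle$ and $\wT_s^s = \wnabla_s$, embedded in $\V(\wT_s)\otimes Q^\wedge$ via $\phi_s$. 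The plan is to compute, in $\Loc'(\fh,W)$, the images $\phi_s\circ\zeta_s$ and $\phi_s\circ(\zeta_s\wstar f)$ for $f\in R^\vee$: using $\phi_s = \varkappa^2_s$ and the defining triangle relating $\wT_s$ to $\wnabla_\varnothing\langle1\rangle\oplus\wnabla_s$ (together with~\eqref{eqn:epsilon-zeta-xi}, which says $\wepsilon_s\langle-1\rangle\circ\zeta_s=\xi$, pinning down the $\wnabla_\varnothing\langle1\rangle$-component of $\phi_s\circ\zeta_s$), one reads off that $\phi_s\circ\zeta_s$ has components a unit of $Q^\vee$ in each of the two summands $\HHom_\loc(\wP,\wnabla_\varnothing\langle1\rangle)$ and $\HHom_\loc(\wP,\wnabla_s)$. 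Combined with~\eqref{eq:HHom loc costandard} and the identity $x\wstar g = g\wstar\pi(\uy)^{-1}(x)$ from~\cite[Lemma 7.4.1]{amrw1} (used exactly as in the proof of Lemma~\ref{lem:V-factorization-Abe}), this says that under $\gamma_s$ the generator $u_s$ goes to an element whose $e$-component is $\delta_s\otimes1 - 1\otimes s(\delta_s)$ up to a unit and whose $s$-component is $\delta_s\otimes1 - 1\otimes\delta_s$ up to a unit — matching~\eqref{eq:Bs in Abe} — provided the localization map $\xi_{\V(\wT_s)}$ was normalized with the same choice of $\delta_s$ used to define $(B_s)^u_Q$ in~\eqref{eq:Bs in Abe}. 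Hence $\gamma_s$ is a morphism, and being already an isomorphism of the underlying bimodules it is an isomorphism in $\cC(\fh^*,W)$.

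I expect the main obstacle to be pinning down the two ``units'' in the previous paragraph precisely enough to match the explicit formulas~\eqref{eq:Bs in Abe}: this requires unwinding the definition of $\varkappa^2_s$ from~\cite[Proposition 11.2.1]{amrw1} and the normalization of $\wnabla_s$ from~\cite[\S7.4]{amrw2}, and tracking the $\delta_s$ appearing in the object-structure~\eqref{eq:Bs in Abe} of $B_s$ in Abe's category through the $(-)^\wedge$ and through the localization. In practice this is the same bookkeeping as in the Cartan case of~\cite{amrw2}, so I would phrase the proof as: "$\gamma_s=(\gamma_s')^\wedge$ is an isomorphism of graded $R^\wedge$-bimodules by~\eqref{eq:gamma s}, identifying the source with $B_s^\wedge$ as above; that it is compatible with the $\cC(\fh^*,W)$-structures follows by the computation of $\phi_s\circ\zeta_s$ in $\Loc'(\fh,W)$, exactly as in the proof of~\cite[Lemma~3.8]{amrw2}, using~\eqref{eq:HHom loc costandard}, \eqref{eqn:epsilon-zeta-xi}, and~\cite[Lemma~7.4.1]{amrw1}." The one genuinely new point to remark on is the bridge between $(R^\vee)^s$-invariants and the object $B_s^\wedge$ of Abe's category attached to $\fh^*$, which I would state explicitly as a short computation to reassure the reader that no characteristic hypotheses re-enter here.
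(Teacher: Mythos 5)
Your plan would work, but it takes the long road where the paper uses a shortcut you missed. Your steps (a) and (c) match the paper's (and the bridge you spell out between $(R^\vee)^s$-invariants and the object $B_s^\wedge$ of Abe's category for $\fh^*$ is correct and worth recording). For step (b) you propose the explicit $\Loc'(\fh,W)$ computation in the style of~\cite[Lemma~3.8]{amrw2}, and you rightly flag as the real difficulty the bookkeeping involved: pinning down two units of $Q^\vee$, and tracking $\delta_s$ through $\varkappa^2_s$, $(-)^\wedge$, and $\xi_{\V(\wT_s)}$. The paper avoids all of this with a structural observation: for $u\in\{1,s\}$, the $u$-component of each of $(B^\wedge_s)_{Q^\wedge}$ and $\V(\wT_s)_{Q^\wedge}$ is exactly the subspace of elements $m$ satisfying $m\cdot x=u(x)\cdot m$ for all $x\in R^\wedge$ (see the start of~\cite[\S 2.4]{abe1}), and these components vanish for all other $u$. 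Such twisted eigenspaces are determined by the underlying $(R^\wedge,Q^\wedge)$-bimodule alone and are preserved by any bimodule morphism, so the forgetful map
\[
\Hom_{\cC(\fh^*,W)}\bigl(B^\wedge_s,\V(\wT_s)\bigr)\longrightarrow\Hom_{R^\wedge\mhyphen\Mod^\Z\mhyphen R^\wedge}\bigl(B^\wedge_s,\V(\wT_s)\bigr)
\]
is a bijection, and step (b) holds automatically once (a) and (c) are in place. In particular the normalization concern you raise about $\delta_s$ never arises: $\delta_s$ only selects a generator inside each eigenspace, it does not change the eigenspaces themselves, and compatibility with the $\cC(\fh^*,W)$-structure is independent of any choice of generators. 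Your route is valid and instructive, but the paper's structural argument is both shorter and more robust, and is worth internalizing for similar verifications elsewhere in Abe's category.
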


\begin{proof}
We already know that $\gamma_s$ is an isomorphism of graded $R^\wedge$-bimodules, so we only have to show that it defines a morphism from $B_s^\wedge$ to $\V(\wT_s)$ in $\cC(\fh^*,W)$. However, the bimodules $(B_s^\wedge)_{Q^\wedge}^u$ and $(\V(\wT_s))^u_{Q^\wedge}$ vanish unless $u \in \{1,s\}$, and identify with the subset of elements $m$ which satisfy $m \cdot x = u(x) \cdot m$ for any $x \in R^\wedge$ if $u \in \{1,s\}$
(see the comments at the beginning of~\cite[\S 2.4]{abe1}), so that the
forgetful functor induces an isomorphism
\[
\End_{\cC(\fh^*,W)}(B^\wedge_s,\V(\wT_s)) \simto \End_{R^\wedge\mhyphen\Mod^\Z\mhyphen R^\wedge}(B^\wedge_s,\V(\wT_s)).
\]
Hence the desired property is automatically satisfied.
\end{proof}

Using Proposition~\ref{prop:V is monoidal} and Lemma~\ref{le:gamma iso} we obtain, for any nonempty expression $\uw$, a canonical isomorphism
\begin{equation}
\label{eq:gamma w}
\gamma_{\uw}:B_\uw^\wedge\simto\V(\wT_\uw) 
\end{equation}
in $\cC(\fh^*,W)$. (For the empty expression, such an isomorphism was already constructed in~\eqref{eq:gamma}.)
%
%
%
%
%

We have therefore proved that the functor $\V$ defines a monoidal functor
\[
\V:\TBS(\fh,W)\to\AbeBS(\fh^*,W) 
\]
such that $\V\circ\langle1\rangle=\langle -1 \rangle\circ\V$, and canonical isomorphisms $\gamma_\uw : B_\uw^\wedge \simto \V(\wT_\uw)$ for all expressions $\uw$.
To finish the proof of Theorem~\ref{thm:V}, it only remains to prove that this functor is fully faithful, which will be done in the next subsection.

\subsection{Invertibility of \texorpdfstring{$\V$}{V}}\label{ss:Invertibility of V}

We start by comparing the graded ranks of morphisms in the source and target categories of $\V$.

\begin{lema}
\label{le:Hom of equal grk}
For any expressions $\uy,\uw$ we have
\[
\grk_{R^\wedge}\HHom_{\TBS(\fh,W)}(\wT_\uy,\wT_\uw)^\wedge=\grk_{R^\wedge}\Hom^\bullet_{\AbeBS(\fh^*,W)}(B^\wedge_\uy, B^\wedge_\uw). 
\]
\end{lema}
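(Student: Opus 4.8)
The plan is to compute both graded ranks independently and check they agree. On the diagrammatic side, the right-hand quantity is well understood: by Theorem~\ref{thm:Abe-EW} the category $\AbeBS(\fh^*,W)$ is equivalent to $\DiagBS(\fh^*,W)$ (intertwining $\langle -1\rangle$ with $(1)$ and sending $B^\wedge_\uw$ to $B^\wedge_\uw$), so $\Hom^\bullet_{\AbeBS(\fh^*,W)}(B^\wedge_\uv,B^\wedge_\uw)$ is a graded free right $R^\wedge$-module, and its graded rank is computed by the standard light-leaves/double-leaves basis theorem of Elias--Williamson (\cite[Corollary~6.14]{ew}): it equals the ``standard pairing'' of the expressions, i.e.\ a sum over $\uv,\uw$-double cosets of subexpressions, expressed via the Hecke-algebra bilinear form. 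Crucially this combinatorial expression depends only on the Coxeter system $(W,S)$ and the integers $m_{s,t}$, not on the particular realization, so it is literally the same polynomial whether computed for $\fh^*$ or for $\fh$.

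On the free-monodromic side, I would compute $\grk_{R^\wedge}\HHom_{\TBS(\fh,W)}(\wT_\uv,\wT_\uw)^\wedge$ using the localization technology of \S\ref{localization}. By Proposition~\ref{prop: Hom FM between tilting} the space $\HHom_{\FM(\fh,W)}(\wT_\uv,\wT_\uw)$ is concentrated in cohomological degree $0$ and is graded free as a right $R^\vee$-module, and by~\eqref{eqn:grk-wedge} applying $(-)^\wedge$ just replaces $v$ by $v^{-1}$ in its graded rank. To pin down that graded rank, I would use the morphisms $\phi_\uw$ of~\eqref{eq:phi w}, which become isomorphisms after localization (Lemma~\ref{le:phis}), together with the decomposition $(\wnabla_\varnothing\langle1\rangle\oplus\wnabla_{s_1})\wstar\cdots\cong\bigoplus_{\uv\in S_\uw}\wnabla_\uv\langle\ell(\uw)-\ell(\uv)\rangle$ and the computation~\eqref{eq:HHom loc costandard} of $\HHom_{\loc}$ between costandard objects. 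This gives $\grk$ of the localized Hom-space as a sum over pairs of subexpressions of $\uv$ and $\uw$ expressing the same element of $W$, with the appropriate shifts; a Nakayama-type argument (Lemma~\ref{le:technical}, or directly the freeness already known) lets one recover the unlocalized graded rank as this same sum. This is exactly the analogue of \cite[Corollary~11.4.?]{amrw1} in the Cartan case, and the proof there transfers verbatim given the results of \S\ref{ss:FMT}.

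The final step is to observe that the two combinatorial sums coincide. Both are sums over pairs of subexpressions $(\ux,\uy)$ with $\ux\in S_\uv$, $\uy\in S_\uw$ and $\pi(\ux)=\pi(\uy)$, weighted by $v^{\ell(\uv)-\ell(\ux)+\ell(\uw)-\ell(\uy)}$ (up to the global $v\mapsto v^{-1}$ swap that $(-)^\wedge$ introduces on one side and the grading convention $\langle 1\rangle$ vs $(1)$ introduces on the other, which I would track carefully). The point is that this is precisely the formula for the standard inner product of the Bott--Samelson basis elements $b_\uv,b_\uw$ in the Hecke algebra, which is realization-independent, so the diagrammatic graded rank for $\fh^*$ equals the free-monodromic graded rank for $\fh$.

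The main obstacle is bookkeeping with the three shift functors $(1)$, $[1]$, $\langle 1\rangle$ and the $(-)^\wedge$ operation, and in particular making sure that the degree conventions on $R^\vee$ (where $V$ sits in degree $-2$) versus $R^\wedge$ (where $V$ sits in degree $2$) are matched correctly through~\eqref{eqn:wedge-shift} and~\eqref{eqn:grk-wedge}; getting any of these signs or bars wrong would give the ``wrong'' (but conjugate) polynomial. The substantive mathematical content — that the localized Hom-spaces between the $\wT_\uw$ are as small as possible and controlled by subexpression combinatorics — is entirely imported from \cite{amrw1} via the groundwork laid in \S\ref{ss:FMT} and \S\ref{localization}, so no new ideas are needed there.
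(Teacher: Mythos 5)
Your high-level strategy is the same as the paper's: express both graded ranks as instances of the standard Hecke-algebra pairing $\langle\uH_\uv,\uH_\uw\rangle$ (which is visibly realization-independent) and compare. On the Abe side you take a detour — routing through Theorem~\ref{thm:Abe-EW} and then invoking the Elias--Williamson double-leaves result of \cite[Corollary~6.14]{ew} — whereas the paper simply cites \cite[Corollary~4.7]{abe1}, which already states the graded rank in Abe's category in terms of the pairing; your route is correct but adds an unnecessary layer.

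The real problem is in your proposed free-monodromic-side computation. The paper obtains $\grk_{R^\vee}\HHom_{\FM(\fh,W)}(\wT_\uv,\wT_\uw)=\langle\uH_\uv,\uH_\uw\rangle$ by citing \cite[Lemma~2.8]{amrw2}, which computes $\grk_\bk\HHom_{\LM(\fh,W)}(\cT_\uv,\cT_\uw)$ directly via the standard/costandard filtration combinatorics of the graded highest-weight category $\fP_\LM(\fh,W)$, and then lifts this to $\FM$ via the freeness statement in Proposition~\ref{prop: Hom FM between tilting}. Your alternative — localizing via $\phi_\uw$ (Lemma~\ref{le:phis}) and reading off the decomposition of $\HHom_{\loc}$ into copies of $Q^\vee$ indexed by matching subexpressions — does not by itself determine the graded rank of the unlocalized $R^\vee$-module. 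The issue is that $Q^\vee$ contains invertible elements of degree $\pm 2$ (namely $w(\alpha_s^\vee)$ and its inverse), so $Q^\vee\langle n\rangle\cong Q^\vee\langle m\rangle$ as graded $Q^\vee$-modules whenever $n\equiv m\pmod 2$; consequently the functor $(-)\otimes_{R^\vee}Q^\vee$ remembers the ranks and parities of the shifts in a graded free module, but not the shifts themselves. Neither the freeness from Proposition~\ref{prop: Hom FM between tilting} nor Lemma~\ref{le:technical} (which detects injectivity/surjectivity after $\otimes_{R^\vee}\bk$, a different kind of statement) repairs this. To make the localization route work you would need to pin down the image of the injection $\HHom_{\FM}(\wT_\uv,\wT_\uw)\hookrightarrow\HHom_{\loc}(\wT_\uv,\wT_\uw)$ and compute the graded rank of that image — essentially a light-leaves argument, which is more work than you indicate. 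The paper's use of \cite[Lemma~2.8]{amrw2} sidesteps the issue entirely.
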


\begin{proof}
As in \cite[\S2.7]{amrw2}, we consider the Hecke algebra $\mathcal{H}_{(W,S)}$ of $(W,S)$, its ``standard'' basis $(H_w : w \in W)$ (as a $\Z[v,v^{-1}]$-module) and the $\Z[v,v^{-1}]$-bilinear pairing $\langle -,- \rangle$ on $\mathcal{H}_{(W,S)}$ which satisfies $\langle H_x,H_y\rangle=\delta_{x,y}$, for $x,y\in W$. 
For $s \in S$ we set $\uH_s = H_s + v$, and if $\uw=(s_1, \cdots, s_n)$ is an expression we set
\[
\uH_{\uw} = \uH_{s_1} \cdots \uH_{s_n}.
\]
The expansion of this element in the basis $(H_x : x \in W)$ will be denoted
\[
\uH_\uy=\sum_{x\in W}p_{\uy}^x(v)H_x.
\]

By~\cite[Lemma~2.8]{amrw2} (which holds in our setting, with the same proof), for any expressions $\uy,\uw$ we have
\begin{equation}
\label{eqn:grk-Hom-LM}
\langle \uH_\uy,\uH_\uw\rangle
=\grk_\bk\HHom_{\LM(\fh,W)}(\cT_\uy,\cT_\uw)
=\grk_{R^\vee}\HHom_{\FM(\fh,W)}(\wT_\uy,\wT_\uw)
\end{equation}
where the second equality is due to Proposition \ref{prop: Hom FM between tilting}.
On the other hand, by~\cite[Corollary 4.7]{abe1} we have
\[
\overline{\langle \uH_\uy,\uH_\uw\rangle}=\overline{\sum_{x\in W} p_{\uy}^x(v)p_{\uw}^x(v)}
=\grk_{R^\wedge}\Hom^\bullet_{\AbeBS(\fh^*,W)}(B_\uy^\wedge,B_\uw^\wedge)
\]
This proves the lemma, in view of~\eqref{eqn:grk-wedge}.
\end{proof}

Let us now denote by $R^\vee\mhyphen\Mod^\Z$, resp.~$R^\wedge\mhyphen\Mod^\Z$, the category of graded left $R^\vee$-modules, resp.~$R^\wedge$-modules. Then as for bimodules we have a natural equivalence of categories
\[
(-)^\wedge : R^\vee\mhyphen\Mod^\Z \to R^\wedge\mhyphen\Mod^\Z
\]
which replaces the grading by the opposite grading. We consider the functor
\begin{equation}
\label{eqn:functor-V'}
\V' : \Tilt_\LM(\fh,W) \to R^\wedge\mhyphen\Mod^\Z
\end{equation}
defined by
\[
\V'(\cF) = \HHom_{\LM(\fh,W)}(\cP,\cF)^\wedge.
\]
Then by
Proposition \ref{prop: analogous of Hom FM between tilting} we have a commutative diagram
\[
\xymatrix@C=1.5cm{
\TBS(\fh,W) \ar[r]^-{\V} \ar[d]_-{\For^\FM_\LM} & R^\wedge\mhyphen\Mod^\Z\mhyphen R^\wedge \ar[d]^-{(-)\otimes_{R^\wedge}\bk} \\
\Tilt_\LM(\fh,W) \ar[r]^-{\V'} & R^\wedge\mhyphen\Mod^\Z.
}
\]


The following lemma is an analogue of~\cite[Lemma 3.9]{amrw2}, which follows from the same arguments using Proposition~\ref{prop:L1 Delta w nabla w} instead of~\cite[Lemma 4.9]{ar}.

\begin{lema}
\label{le:V' faithful}
The functor $\V'$ is faithful.
\qed
\end{lema}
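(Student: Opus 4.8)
The plan is to reduce faithfulness of $\V' = \HHom_{\LM(\fh,W)}(\cP,-)^\wedge$ to a statement about (co)standard filtrations and the known dimension count \eqref{eq:dim Hom LM P nabla}, following the strategy of \cite[Lemma 3.9]{amrw2}. Since $\V'$ is additive and $\Tilt_\LM(\fh,W)$ is Krull--Schmidt (Corollary \ref{cor:Tilt LM indecomposable}), it suffices to show: for any objects $\cF,\cG$ in $\Tilt_\LM(\fh,W)$ and any morphism $g \colon \cF \to \cG$ with $g \neq 0$, the induced map $\Hom(\cP,\cF) \to \Hom(\cP,\cG)$ is nonzero (the $(-)^\wedge$ twist only reindexes gradings, so it is irrelevant to faithfulness). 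First I would factor $g$ through its image in the abelian category $\fP_\LM(\fh,W)$: since the tilting objects lie in the heart and $g \ne 0$, the image $\im(g)$ is a nonzero subobject of $\cG$ and a nonzero quotient of $\cF$.

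The key mechanism is that $\cP$ is the projective cover of the simple $\cL_1$ in $\fP_\LM(\fh,W)$, so $\Hom_{\LM(\fh,W)}(\cP, M)$ "counts" the multiplicity $[M : \cL_1\langle n\rangle]$ (with grading shifts), and by projectivity $\Hom(\cP,-)$ is exact. Thus the induced map $\Hom(\cP,\cF)\to\Hom(\cP,\cG)$ factors as $\Hom(\cP,\cF)\twoheadrightarrow\Hom(\cP,\im g)\hookrightarrow\Hom(\cP,\cG)$, and it is nonzero as soon as $\Hom(\cP,\im g) \neq 0$, i.e.\ as soon as $\im(g)$ has some composition factor isomorphic to $\cL_1\langle n\rangle$. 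So the heart of the argument is: \emph{every nonzero subquotient of a tilting object in $\fP_\LM(\fh,W)$ contains a composition factor of the form $\cL_1\langle n\rangle$.} To see this, take any nonzero subobject $N$ of a tilting object $\cG$; since $\cG$ has a costandard filtration, $N$ embeds into some $\dnabla_w\langle m\rangle$, and by Proposition \ref{prop:L1 Delta w nabla w}(2) the \emph{socle} of $\dnabla_w\langle m\rangle$ is $\cL_1\langle m-\ell(w)\rangle$. Hence $\soco(N) \supseteq$ a copy of $\cL_1\langle m - \ell(w)\rangle$ (more precisely, $\soco(N)$ meets the socle of $\dnabla_w\langle m\rangle$, which consists only of shifts of $\cL_1$). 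Dually, applying Proposition \ref{prop:L1 Delta w nabla w}(1) and the standard filtration, every nonzero quotient of $\cG$ has $\cL_1\langle n\rangle$ in its head. Applying this to $N = \im(g)$ finishes the proof.

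A small amount of care is needed to run the socle/head argument cleanly: one should use that a tilting object $\cG$, having a costandard filtration, can be built as successive extensions, and that a nonzero subobject of $\cG$ necessarily has nonzero intersection with the socle of $\cG$, which by Proposition \ref{prop:L1 Delta w nabla w}(2) (applied to the bottom costandard step, or inductively along the filtration) is a direct sum of shifts $\cL_1\langle\star\rangle$. Equivalently one can argue directly: $\Hom_{\LM(\fh,W)}(\cP, \im g) \neq 0$ because a nonzero map $\cP \to \cG$ factoring through $\im g$ exists --- indeed apply exactness of $\Hom(\cP,-)$ to the short exact sequence $0 \to \ker \to \cF \to \im g \to 0$ together with the surjection onto $\im g$ admitting a nonzero image, using that $\Hom(\cP,\cF) \neq 0$ and that $\cP \twoheadrightarrow \cL_1 = \cL_1$ composed with a suitable map witnesses nonvanishing.

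\textbf{Main obstacle.} The only genuinely nontrivial input is Proposition \ref{prop:L1 Delta w nabla w}, controlling the socle of $\dDelta_w$ and the head of $\dnabla_w$ and ruling out "extra" $\cL_1$-factors; but this has already been established (via transfer from $\BE(\fh,W)$ and \cite[Proposition 8.3]{arv}), so the present lemma is essentially a formal consequence. The subtlety to watch is purely bookkeeping with the grading shifts $\langle n\rangle$ and with the fact that $\fP_\LM(\fh,W)$ may have infinitely many simples, so one should phrase everything in terms of "some shift of $\cL_1$ occurs" rather than tracking a single grading; with that understood, the argument of \cite[Lemma 3.9]{amrw2} applies verbatim, replacing references to \cite[Lemma 4.9]{ar} by Proposition \ref{prop:L1 Delta w nabla w}.
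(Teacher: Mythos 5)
Your overall strategy is the right one and matches what the paper intends (delegating to the argument of \cite[Lemma 3.9]{amrw2}): use that $\cP$ is the projective cover of $\cL_1$, factor a nonzero $g\colon\cF\to\cG$ between tiltings through its image $N=\im(g)$, and show $N$ must have some $\cL_1\langle n\rangle$ as a composition factor so that $\Hom(\cP\langle m\rangle,N)\neq 0$ for some $m$. But the way you pair the filtrations of the tilting objects with the two parts of Proposition~\ref{prop:L1 Delta w nabla w} is crossed, and as written the key step fails.

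Concretely, you claim that the socle of $\dnabla_w\langle m\rangle$ is $\cL_1\langle m-\ell(w)\rangle$ and cite Proposition~\ref{prop:L1 Delta w nabla w}(2). This is false: in a (graded) highest weight category the costandard object $\dnabla_w$ has simple \emph{socle} $\cL_w$, and Proposition~\ref{prop:L1 Delta w nabla w}(2) describes its \emph{head}, which is $\cL_1\langle\ell(w)\rangle$. So a nonzero subobject of $\cG$ meeting the bottom costandard step $\dnabla_{w_1}\langle m_1\rangle$ only gives you $\cL_{w_1}\langle m_1\rangle$ in the socle, not a shift of $\cL_1$. Your ``dual'' sentence has the same crossing: the \emph{head} of $\dDelta_w$ is $\cL_w$, and Proposition~\ref{prop:L1 Delta w nabla w}(1) describes its \emph{socle}.

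The fix is to swap the pairings. To control the socle of the tilting $\cG$, use its \emph{standard} filtration together with Proposition~\ref{prop:L1 Delta w nabla w}(1): each $\dDelta_w\langle n\rangle$ occurring has socle $\cL_1\langle n-\ell(w)\rangle$, and a short exact sequence argument then shows that every simple summand of $\soco(\cG)$ is a shift of $\cL_1$. Since $\fP_\LM(\fh,W)$ has finite length here ($W$ is finite in Section~\ref{sec:functor V}), the nonzero subobject $N\subset\cG$ has $\soco(N)\neq 0$ and $\soco(N)\subset\soco(\cG)$, hence $N$ has some $\cL_1\langle n\rangle$ as a subobject; then projectivity of $\cP$ gives a map $\cP\langle m\rangle\to\cF$ whose composite with $g$ is nonzero. (Equivalently, one can control the head of $\cF$ via its \emph{costandard} filtration and Proposition~\ref{prop:L1 Delta w nabla w}(2).) With this correction your argument goes through and agrees with the paper's intended proof.
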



We are now ready to complete the proof of Theorem \ref{thm:V}. 

\begin{proof}[Proof of Theorem~\ref{thm:V}] 
As explained at the end of~\S\ref{ss:V}, it only remains to prove that $\V$ is fully faithful.
First we show that $\V$ is faithful. 
Let $\uw$ and $\uv$ be expressions. Fix a finite family of $n$ homogeneous generators of the $R^\vee$-bimodule $\V(\wT_\uv)$; by Proposition~\ref{prop: analogous of Hom FM between tilting} this also provides a family of homogeneous generators of the left $R^\vee$-module $\V'(\cT_\uv)$.
Consider the commutative diagram
\[
\xymatrix@C=0.5cm{
\HHom_{\FM(\fh,W)}(\wT_\uv,\wT_\uw) \ar[r] \ar[d] & \bigoplus_m \Hom_{R^\vee\mhyphen\Mod^\Z\mhyphen R^\vee}(\V(\wT_\uv),\V(\wT_\uw) \langle m \rangle) \ar[r] \ar[d] & \V(\wT_\uw)^{\oplus n} \ar[d] \\
\HHom_{\LM(\fh,W)}(\cT_\uv,\cT_\uw) \ar[r] & \bigoplus_m \Hom_{R^\vee\mhyphen \Mod^\Z}(\V'(\cT_\uv),\V'(\cT_\uw)\langle m \rangle) \ar[r] & \V'(\cT_\uw)^{\oplus n}
}
\]
where:
\begin{itemize}
\item
the left horizontal arrows are induced by the functors $\V$ and $\V'$;
\item
the right horizontal arrows are induced by our choice of generators of $\V(\wT_\uv)$ and $\V'(\cT_\uv)$;
\item
the vertical arrows are induced by the functors $\For_{\LM}^{\FM}$ and $(-) \otimes_{R^\wedge} \bk$.
\end{itemize}
Proposition~\ref{prop: analogous of Hom FM between tilting} shows that in the leftmost column and in the rightmost column, the $\bk$-vector space on the bottom line is obtained from the free right $R^\vee$-module on the upper line by applying the functor $(-) \otimes_{R^\wedge} \bk$.
The lower composition is injective by Lemma \ref{le:V' faithful} and the fact that our family generates $\V'(\cT_\uv)$. By Lemma \ref{le:technical}, it follows that the upper composition is also injective, hence so is the upper left morphism, showing that $\V$ is faithful.

Now, for any $m \in \Z$, the functor $\V$ induces a morphism between the homogeneous components
\[
\left(\HHom_{\FM(\fh,W)}(\wT_\uv,\wT_\uw)^\wedge\right)_{m}\quad\mbox{and}\quad
\Hom_{\AbeBS(\fh^*,W)}(\V(\wT_\uv),\V(\wT_\uw) \langle -m \rangle)
\]
which is injective as $\V$ is faithful. By Lemma \ref{le:Hom of equal grk}, these vector spaces have the same finite dimension. Therefore this morphism is an isomorphism, proving that $\V$ is fully faithful.
\end{proof}

\begin{rmk}
Our proof of full faithfulness above is less direct than that of the corresponding claim in~\cite{amrw2}. This is due to the fact that there is no theory of ``Soergel modules'' in the setting of~\cite{abe1}.
\end{rmk}

\section{Construction of the \texorpdfstring{$2m_{st}$}{2mst}-valent morphisms}
\label{sec:construction}

From now on we drop the assumption that $W$ is finite. We therefore consider an arbitrary Coxeter system $(W,S)$ and a realization $\fh$ of $(W,S)$ which satisfies the conditions of~\S\ref{ss:convention},~\S\ref{ss:dual-real} and~\S\ref{ss:bifunctoriality}.

\subsection{Overview}
\label{ss:overview}

Given a pair $s,t$ of distinct simple reflections such that the product $st$ have finite order $m_{s,t}$, we will denote by $w(s,t)$ the word $(s,t,\cdots)$ of length $m_{s,t}$ where the letters alternate between $s$ and $t$. Note that the relation $\pi(w(s,t)) = \pi(w(t,s))$ in $W$ is precisely the braid relation associated with $s$ and $t$.
Our goal in this section is to construct, for any such pair $s,t$, a canonical morphism
\[
f_{s,t} : \wT_{w(s,t)} \to \wT_{w(t,s)}
\]
which will eventually be the image under Koszul duality of the $2m_{s,t}$-valent morphism associated with $(s,t)$ in the Hecke category. 

We will proceed as follows. By Abe's theory there exists a unique morphism
\[
\varphi_{s,t}:B^\wedge_{w(s,t)}\to B^\wedge_{w(t,s)}
\]
in $\AbeBS(\fh^*,W)$ such that, using the notation of~\S\ref{ss:Abe}, we have
\begin{equation}
\label{eq:varphi st}
\varphi_{s,t}(u_{w(s,t)})=u_{w(t,s)},
\end{equation}
see~\cite[Theorem 3.9]{abe2}. (More specifically, this reference states the existence of such a morphism. Unicity follows from~\cite[Theorem 4.6]{abe1} and the computations in~\cite[\S 4.1]{libedinsky}.) Now, choose a subset $S' \subset S$ which contains $s$ and $t$ and generates a finite subgroup $W'$ of $W$. Choose also the data that allow to define a functor $\V_{W'}$ as in~\S\ref{ss:V} and its monoidal structure, for the Coxeter system $(W',S')$ and its realization $\fh_{|W'}$ (see~\eqref{eqn:wP}--\eqref{eq:xi}). Then the category $\TBS(\fh_{|W'},W')$ identifies in the natural way with a full subcategory of $\TBS(\fh,W)$, in such a way that the objects $\wT_{w(s,t)}$ and $\wT_{w(t,s)}$ in these two categories coincide. By full faithfulness of $\V_{W'}$ (see Theorem~\ref{thm:V}) there exists a unique morphism $f_{s,t}$ as above such that
\begin{equation}
\label{eqn:V-f-phi}
\V_{W'}(f_{s,t})= \gamma_{w(t,s)} \circ \varphi_{s,t} \circ (\gamma_{w(s,t)})^{-1},
\end{equation}
which provides the desired morphism.

Obviously there is a problem with this definition, since it might depend on the extra data we have introduced: the subset $S'$, and the data involved in the definition of $\V_{W'}$. Our goal is exactly to prove that, in fact, it is not the case. Note that we could have solved the problem of the dependence on the choice of $S'$ by setting $S'=\{s,t\}$; however, later we will need to use that~\eqref{eqn:V-f-phi} holds also for some other choices of $S'$.

\subsection{Proof of independence}
\label{ss:coherence of V}



Our goal is therefore to prove the following claim.

\begin{prop}
\label{prop:fst}
For any pair $(s,t)$ of simple reflections generating a finite subgroup of $W$, there exists a morphism
\[
f_{s,t} : \wT_{w(s,t)} \to \wT_{w(t,s)}
\]
which satisfies the following property. For any subset $S' \subset S$ containing $s$ and $t$ and generating a finite subgroup $W'$ of $W$, and for any choices~\eqref{eqn:wP}--\eqref{eq:xi} allowing to define a monoidal functor $\V_{W'}$ as in~\S\ref{ss:V} for the Coxeter system $(W',S')$ and its realization $\fh_{|W'}$, the equality~\eqref{eqn:V-f-phi} holds.
%
\end{prop}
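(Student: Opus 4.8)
The plan is to reduce the independence statement to a compatibility property of the functors $\V_{W'}$ for varying parabolic subgroups, and to a uniqueness statement inside a single $\V_{W'}$. First I would fix a \emph{reference} choice: take $S_0'=\{s,t\}$, $W_0'=\langle s,t\rangle$, and a choice of data \eqref{eqn:wP}--\eqref{eq:xi} for $(W_0',S_0')$, and \emph{define} $f_{s,t}$ by \eqref{eqn:V-f-phi} using $\V_{W_0'}$; this makes sense by full faithfulness of $\V_{W_0'}$ (Theorem~\ref{thm:V}) and by the existence and uniqueness of $\varphi_{s,t}$ recalled in~\S\ref{ss:overview}. The content of the proposition is then that for \emph{any} other $S'\supset\{s,t\}$ with $W'$ finite, and any admissible choice of data for $(W',S')$, the morphism $\V_{W'}(f_{s,t})$ equals $\gamma_{w(t,s)}\circ\varphi_{s,t}\circ(\gamma_{w(s,t)})^{-1}$.

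The key step is to analyze how $\V_{W'}$ depends on the choices \eqref{eqn:wP}--\eqref{eq:xi}, and how the functors for $W_0'\subset W'$ are related. For the first point: the object $\wP$ for $(W',S')$ is $\wT_{w_0'}\langle-\ell(w_0')\rangle$ where $w_0'$ is the longest element of $W'$, defined up to (non-unique) isomorphism, and $\xi$ is a generator of a rank-one free $R^\vee$-module, so it is unique up to a nonzero scalar. Changing $\wP$ by an isomorphism $\theta$ and $\xi$ by a scalar $\lambda$ replaces the functor $\V_{W'}$ by a naturally isomorphic functor, and the natural isomorphism is compatible with the monoidal structures $(\beta,\gamma)$ and with the identifications $\gamma_{\uw}$ of \eqref{eq:gamma w} (which are themselves \emph{characterized} by the monoidal structure together with $\gamma_s=(\gamma_s')^\wedge$ and $\gamma_\varnothing$). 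Since $f_{s,t}$ is a single fixed morphism in $\TBS(\fh,W)$, applying a naturally isomorphic monoidal functor and conjugating the target/source identifications accordingly preserves the equation \eqref{eqn:V-f-phi}; so the statement is independent of the data chosen for a \emph{fixed} $S'$.

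It therefore remains to compare $\V_{W_0'}$ and $\V_{W'}$ for $\{s,t\}=S_0'\subset S'$. Here the essential observation is that $\TBS(\fh_{|W_0'},W_0')$ sits as a full monoidal subcategory of $\TBS(\fh_{|W'},W')$ (with the objects $\wT_{w(s,t)}$, $\wT_{w(t,s)}$ common to both), and likewise $\AbeBS((\fh_{|W_0'})^*,W_0')\subset\AbeBS((\fh_{|W'})^*,W')$; and both functors $\V_{W_0'}$ and the restriction of $\V_{W'}$ to the smaller $\TBS$ are monoidal functors sending $\wT_\uw$ to $B^\wedge_\uw$ and $\langle1\rangle$ to $\langle1\rangle$, and inducing on one-color generators the same maps $\gamma_s$, $\gamma_\varnothing$ (these are built from $\wepsilon_s$, $\zeta_s$, $\gamma_s'$, which only involve the single reflection $s$ and hence do not see the ambient group). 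A monoidal functor out of $\TBS(\fh_{|W_0'},W_0')$ with prescribed values on generating objects and on the one-color data is rigid enough — by the description of $\AbeBS$ morphism spaces via Abe's theory and the uniqueness part of the proof of full faithfulness — to be determined up to a unique monoidal natural isomorphism fixing the $\gamma$'s; concretely, one checks that $(\V_{W'})|_{\TBS(\fh_{|W_0'},W_0')}$ composed with the evident inclusion of $\AbeBS$'s agrees with $\V_{W_0'}$ after conjugating by such an isomorphism, compatibly with the $\gamma_{\uw}$. Applying this to $f_{s,t}$ and its defining equation for $W_0'$ yields the equation for $W'$.

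The main obstacle I expect is the last comparison: making precise the sense in which the restriction of $\V_{W'}$ to the parabolic subcategory ``is'' $\V_{W_0'}$, since $\V_{W'}$ is defined using $\wP$ for the \emph{bigger} group $W'$, whose relation to the big tilting object of $W_0'$ is not transparent. The right way to handle this is probably not to compare the two $\wP$'s directly, but to exploit that both $\V_{W_0'}$ and $(\V_{W'})|$ land in the same small category $\AbeBS((\fh_{|W_0'})^*,W_0')$ (one must first check the restricted functor takes values there, using \eqref{eq:HHom loc costandard}--\eqref{eq:HHom loc Tu uw} and that $w(s,t)$, $w(t,s)$ only involve elements of $W_0'$), and to invoke the uniqueness of a monoidal functor with the given behavior on generators — i.e.\ to rerun the argument of \S\ref{ss:V} showing that $\gamma_s$, $\gamma_\varnothing$ and the monoidal structure force all $\gamma_{\uw}$, hence force $\V$ on all morphisms between the relevant objects. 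Once that rigidity is in place, transporting the single equation \eqref{eqn:V-f-phi} from $W_0'$ to $W'$ is purely formal.
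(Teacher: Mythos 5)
Your strategy is genuinely different from the paper's, and it has a gap at exactly the point you flag, plus a second issue you don't mention.

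The paper does not try to compare the functors $\V_{W_0'}$ and $\V_{W'}$ at all, and does not try to prove that $\V_{W'}$ is independent of the choices up to coherent natural isomorphism. Instead it isolates a single scalar-detecting ``test'': it proves (Lemma~\ref{le:wepsilon}, using the coalgebra structure of $\wP$, the interchange law, and the identities $\wepsilon_s\langle-1\rangle\circ\zeta_s=\xi$ and $(\xi\wstar\xi)\circ\delta=\xi$) that \emph{any} morphism $f_{s,t}$ produced by~\eqref{eqn:V-f-phi} satisfies
\[
\For^\FM_\LM(\wepsilon_{w(s,t)})=\For^\FM_\LM(\wepsilon_{w(t,s)}\circ f_{s,t}),
\]
with $\For^\FM_\LM(\wepsilon_{w(s,t)})\neq0$. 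Since $\Hom_{\FM(\fh,W)}(\wT_{w(s,t)},\wT_{w(t,s)})$ is one-dimensional, any two such morphisms are scalar multiples of each other, and the displayed identity forces the scalar to be $1$. The crucial computation lives entirely in $\FM(\fh,W)$ and $\LM(\fh,W)$, where there is no dependence on $S'$ or on the auxiliary data.

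Your route has two problems. First, as you yourself observe, there is no transparent relation between the big tilting object $\wP$ for $W'$ and the one for $W_0'$; the restriction of $\V_{W'}$ to $\TBS(\fh_{|W_0'},W_0')$ is defined via $\Hom(\wP_{W'},-)$, which is a very different functor from $\Hom(\wP_{W_0'},-)$, and you do not exhibit the comparison. Second, your proposed fix — ``invoke the uniqueness of a monoidal functor with the given behavior on generators'' — is circular: the generating morphisms of $\DiagBS$ (hence, by full faithfulness, of $\TBS(\fh_{|W_0'},W_0')$) include the $2m_{s,t}$-valent morphism, i.e.\ precisely the morphism whose image you are trying to pin down. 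Knowing a monoidal functor on objects, on shifts, and on one-color generators does \emph{not} determine it; the two-color data is exactly the remaining freedom, and the argument of \S\ref{ss:V} establishes full faithfulness of a \emph{fixed} functor, not uniqueness of a functor with prescribed one-color behavior. To make your approach work you would in any case need some independent input detecting the scalar on the two-color morphism — which is what Lemma~\ref{le:wepsilon} supplies, so you would be led back to the paper's argument.

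Your discussion of independence for a \emph{fixed} $S'$ (rescaling $\xi$ and replacing $\wP$ by an isomorphic object) is correct in spirit and can be made to work by a direct computation, but the paper's formulation deliberately sidesteps even this by absorbing it into the same uniqueness-plus-test argument.
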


The proof of Proposition~\ref{prop:fst} will use the following preliminary result. Recall, for any $s \in S$, the morphism 
\begin{equation}
\label{eqn:wepsilon-def}
\wepsilon_s:\wT_{s}\to\wT_1\langle1\rangle
\end{equation}
in $\TBS(\fh,W)$ constructed in \cite[\S5.3.4]{amrw1}. For an expression $\uw=(s_1, \cdots, s_r)$ we set
\begin{equation}
\label{eq:wepsilon w}
\wepsilon_{\uw}:=\wepsilon_{s_1} \wstar\cdots \wstar \wepsilon_{s_r}:\wT_{\uw}\to\wT_1\langle \ell(\uw)\rangle.
\end{equation}

\begin{lema}
\label{le:wepsilon}
Let $(s,t)$ be a pair of simple reflections generating a finite subgroup of $W$.
The $\bk$-vector space
\[
\Hom_{\LM(\fh,W)}(\cT_{w(s,t)},\cT_1\langle \ell(w(s,t))\rangle)
\]
is $1$-dimensional, and is spanned by $\For^\FM_\LM(\wepsilon_{w(s,t)})$. Moreover, for any subset $S' \subset S$ generating a finite subgroup $W'$ of $W$, and for any data~\eqref{eqn:wP}--\eqref{eq:xi} allowing to define a monoidal functor $\V_{W'}$ as in~\S\ref{ss:V}, if $f_{s,t}$ is the unique morphism which satisfies~\eqref{eqn:V-f-phi} we have
\[
\For^\FM_\LM(\wepsilon_{w(s,t)})=\For^\FM_\LM(\wepsilon_{w(t,s)}\circ f_{s,t}).
\]
\end{lema}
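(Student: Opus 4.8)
The plan is to first compute the Hom-space $\Hom_{\LM(\fh,W)}(\cT_{w(s,t)},\cT_1\langle \ell(w(s,t))\rangle)$, then locate $\For^\FM_\LM(\wepsilon_{w(s,t)})$ inside it, and finally transport the identity across $\V_{W'}$. Let me write $\uw = w(s,t)$ and $m = m_{s,t} = \ell(\uw)$. By Proposition~\ref{prop: Hom FM between tilting} (applied with $\uv = \uw$ and $\uw = \varnothing$) combined with Proposition~\ref{prop:TiltLM}, the dimension of $\Hom_{\LM(\fh,W)}(\cT_\uw,\cT_1\langle m\rangle)$ equals the graded piece of degree $m$ of $\HHom_{\LM(\fh,W)}(\cT_\uw,\cT_1)$, which by~\eqref{eqn:grk-Hom-LM} is the coefficient of $v^{-m}$ (after the appropriate normalization) in $\langle \uH_\uw, \uH_\varnothing\rangle = \langle \uH_\uw, 1\rangle$. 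Writing $\uH_\uw = \uH_s\uH_t\cdots$ ($m$ factors) and using $\langle H_x, H_1\rangle = \delta_{x,1}$, the coefficient of $H_1$ in $\uH_\uw$ is $v^m$ (each $\uH_r = H_r + v$ contributes its $v$ to obtain the identity term), so the relevant Hom-space is exactly $1$-dimensional. (Alternatively, one can argue directly: tensoring iteratively with $\wT_r\wstar(-)$ as in Lemma~\ref{le:tensoring by wT s}, and noting $\cT_1 = \dnabla_1$, the costandard filtration of $\cT_\uw$ has $\dnabla_1\langle -m\rangle$ appearing with multiplicity one at the ``bottom,'' and $\Hom$ into $\cT_1\langle m\rangle$ picks out this one summand via~\eqref{eq:dim Hom LM P nabla}-type reasoning.)

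Next I would check that $\For^\FM_\LM(\wepsilon_\uw)$ is a \emph{nonzero} element of this $1$-dimensional space, hence spans it. The morphism $\wepsilon_\uw = \wepsilon_{s_1}\wstar\cdots\wstar\wepsilon_{s_m}$ is a convolution of the basic morphisms $\wepsilon_r : \wT_r \to \wT_1\langle 1\rangle$ from~\cite[\S5.3.4]{amrw1}, each of which is nonzero; the key point is that convolving nonzero morphisms of this ``counit'' type stays nonzero. One clean way to see this: apply $\Loc$ (the localization functor of~\S\ref{localization}). Under localization $\wT_r$ becomes isomorphic to $\wnabla_\varnothing\langle 1\rangle \oplus \wnabla_r$ via $\phi_r$ (Lemma~\ref{le:phis}), and $\wepsilon_r$ corresponds to the projection onto (a shift of) the $\wnabla_\varnothing$-summand — this is exactly how $\wepsilon_r$ is characterized via $\phi_r = \varkappa^2_r$ in~\cite[\S11.2]{amrw1}. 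Then $\Loc(\wepsilon_\uw)$ is the composite of the projections to the ``all-identity'' subexpression summand, which is nonzero in $\Loc'(\fh,W)$ by~\eqref{eq:HHom loc costandard} (the $\pi(\varnothing) = 1 = \pi(\varnothing)$ component). Since $\Loc$ factors through $\For^\FM_\LM$ followed by the localization on $\LM$, nonvanishing of $\Loc(\wepsilon_\uw)$ forces nonvanishing of $\For^\FM_\LM(\wepsilon_\uw)$. This proves the first assertion of the lemma.

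For the second assertion, consider the composite $\wepsilon_{w(t,s)}\circ f_{s,t} : \wT_{w(s,t)} \to \wT_1\langle m\rangle$; since its $\For^\FM_\LM$-image lies in the same $1$-dimensional space, it suffices to prove this composite is nonzero, and then to fix the scalar. I would push everything through $\V_{W'}$, using that $\V_{W'}$ is a fully faithful monoidal functor (Theorem~\ref{thm:V}). Under $\V_{W'}$, by~\eqref{eqn:V-f-phi} the morphism $f_{s,t}$ becomes $\gamma_{w(t,s)}\circ\varphi_{s,t}\circ\gamma_{w(s,t)}^{-1}$, where $\varphi_{s,t}$ is Abe's morphism sending $u_{w(s,t)}\mapsto u_{w(t,s)}$. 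The morphism $\wepsilon_r$ corresponds under $\V_{W'}$ (more precisely under the identifications $\gamma_r : B_r^\wedge \simto \V(\wT_r)$) to the ``dot'' morphism $B_r^\wedge \to R^\wedge\langle 1\rangle$ in Abe's category, which on $1$-tensors sends $u_r = 1\otimes 1 \mapsto \delta_r \otimes 1 - 1\otimes\delta_r$ up to the identification, or more simply corresponds to the counit structure; under monoidality, $\V_{W'}(\wepsilon_\uw)$ is the convolution of these, i.e.\ the composite dot morphism $B^\wedge_{w(s,t)} \to R^\wedge\langle m\rangle$. Now $\varphi_{s,t}$ intertwines the $1$-tensor elements ($u_{w(s,t)}\mapsto u_{w(t,s)}$) and the dot morphisms are characterized by their effect on $1$-tensors, so $\V_{W'}(\wepsilon_{w(t,s)})\circ\V_{W'}(f_{s,t})$ and $\V_{W'}(\wepsilon_{w(s,t)})$ agree on $u_{w(s,t)}$, hence agree as morphisms in $\AbeBS(\fh^*_{|W'},W')$ (a morphism out of $B^\wedge_{w(s,t)}$ to $R^\wedge\langle m\rangle$ is determined by its value on the $1$-tensor, by~\cite[\S4.1]{libedinsky} together with~\cite[Theorem 4.6]{abe1}, as cited in~\S\ref{ss:overview}). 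By faithfulness of $\V_{W'}$ this gives $\wepsilon_{w(t,s)}\circ f_{s,t} = \wepsilon_{w(s,t)}$ already in $\TBS(\fh_{|W'},W') \subset \TBS(\fh,W)$, which is stronger than needed; applying $\For^\FM_\LM$ yields the claim. The main obstacle is the bookkeeping in the last paragraph: correctly identifying $\V_{W'}(\wepsilon_r)$ with the dot morphism in Abe's category and verifying that dot morphisms into $R^\wedge\langle\bullet\rangle$ are pinned down by their $1$-tensor values — this requires carefully unwinding the monoidal structure $(\V,\beta,\gamma)$ of Proposition~\ref{prop:V is monoidal} and the defining property~\eqref{eqn:epsilon-zeta-xi} of $\zeta_s$ (hence of $\wepsilon_s$ relative to $\xi$) against Abe's conventions from~\cite{abe2}.
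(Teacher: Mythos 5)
Your argument breaks down at the nonvanishing step. You claim that ``$\Loc$ factors through $\For^\FM_\LM$ followed by the localization on $\LM$,'' and use this to conclude $\For^\FM_\LM(\wepsilon_{w(s,t)}) \neq 0$ from $\Loc(\wepsilon_{w(s,t)}) \neq 0$. But no such factorization exists: $\Loc$ is obtained by applying $(-)\otimes_{R^\vee} Q^\vee$ to $\HHom_{\FM}$ (inverting the elements $w(\alpha_s) \in V$), while $\For^\FM_\LM$ corresponds to $(-)\otimes_{R^\vee}\bk$ with $\bk = R^\vee/(V)$ (killing $V$). These two base-changes are incompatible, and indeed a nonzero element of $\HHom_{\FM}(\wT_{\uw},\wT_1)$ can easily vanish under $\For^\FM_\LM$ while surviving localization (any element in $V\cdot\HHom_{\FM}$ does so). What your $\Loc$ computation actually gives is $\wepsilon_{w(s,t)}\neq 0$ \emph{in $\FM$}; to upgrade this to nonvanishing in $\LM$ you need a further step — either the degree observation that $\wepsilon_{w(s,t)}$ sits in the top graded piece of the free $R^\vee$-module $\HHom_{\FM}(\wT_{w(s,t)},\wT_1)^0_\bullet$ (so it must be a free generator), or the argument of Remark~\ref{rmk:For-epsilon} extracting it from $\For^\FM_\LM(\xi)\neq 0$ via the $u_{w(s,t)}\mapsto\xi$ computation. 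The paper instead cites the argument of~\cite[Lemma 4.4]{amrw2} directly.

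The rest of your proposal is essentially the paper's strategy in disguise. The $1$-dimensionality computation via $p^1_{w(s,t)}(v)$ matches, though your parenthetical assertion that ``the coefficient of $H_1$ in $\uH_{\uw}$ is $v^m$'' is literally false (that coefficient is the whole polynomial $p^1_{\uw}$; what you need, and what the paper proves via~\cite[Lemma 2.7]{ew}, is that its top term $v^m$ has coefficient $1$). For the scalar identity, your route — showing $\V_{W'}(\wepsilon_{w(t,s)}\circ f_{s,t})\circ\gamma_{w(s,t)}$ and $\V_{W'}(\wepsilon_{w(s,t)})\circ\gamma_{w(s,t)}$ both send $u_{w(s,t)}$ to $\xi$, then using that $u_{w(s,t)}$ generates $B^\wedge_{w(s,t)}$ as an $R^\wedge$-bimodule and faithfulness of $\V_{W'}$ — is sound and in fact proves equality already in $\TBS(\fh,W)$, which is stronger than the paper's conclusion (the paper only compares after $\For^\FM_\LM$, working with an unknown scalar $a$). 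Note though that the establishing of $\V_{W'}(\wepsilon_s)\circ\gamma_s = m_s$ requires precisely the coalgebra/exchange-law computation that the paper carries out here; also your parenthetical formula ``$u_r\mapsto\delta_r\otimes 1 - 1\otimes\delta_r$'' describes the lower dot $\beta_s$, not the upper dot $m_s$ associated with $\wepsilon_s$, which sends $u_s\mapsto 1$.
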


\begin{proof}
Keeping the notation introduced in the proof of Lemma \ref{le:Hom of equal grk}, by~\eqref{eqn:grk-Hom-LM} we have
\[
p_{w(s,t)}^1(v)=\langle \uH_{w(s,t)},\uH_1\rangle=\sum_{n\in\Z}\dim_\ku\left(\Hom_{\LM(\fh,W)}(\cT_{w(s,t)},\cT_1\langle n\rangle)\right) v^n.
\]
(In Section~\ref{sec:functor V} it is assumed that $W$ is finite, but this condition is not required for this specific statement to hold.)
On the other hand, using the formula in~\cite[Lemma~2.7]{ew} one sees that the highest monomial appearing in $p_{w(s,t)}^1(v)$ is $v^{\ell(w(s,t))}$, and that its coefficient is $1$; we deduce that
\[
\dim_\bk \Hom_{\LM(\fh,W)}(\cT_{w(s,t)},\cT_1\langle \ell(w(s,t))\rangle)=1.
\]
The argument in the proof of~\cite[Lemma 4.4]{amrw2} shows that $\For^\FM_\LM(\wepsilon_{w(s,t)})\neq 0$; this morphism is therefore a generator of $\Hom_{\LM(\fh,W)}(\cT_{w(s,t)},\cT_1\langle \ell(w(s,t))\rangle)$.

Let us now fix data as in the third sentence of the lemma, and consider the corresponding morphism $f_{s,t}$. 
What we have shown above implies that there exists $a \in \bk$ such that
\begin{equation}
\label{eq:a}
\For^\FM_\LM(\wepsilon_{w(t,s)}\circ f_{s,t})=a\cdot\For^\FM_\LM(\wepsilon_{w(s,t)}),
\end{equation}
and what we have to prove is that $a=1$. For this, we will describe some morphisms in different ways and then compare them.

We first compute the morphism
\[
\V_{W'}(\wepsilon_{w(t,s)}\circ f_{s,t}) \left( \gamma_{w(s,t)} (u_{w(s,t)}) \right) \in \V_{W'}(\wT_1)=\Hom_{\FM(\fh,W)}(\wP_{W'},\wT_1),
\]
where $\wP_{W'}$ is the object~\eqref{eqn:wP} we have chosen and, for any $\uw \in \Exp(W')$, $\gamma_{\uw}$ is the isomorphism~\eqref{eq:gamma w} obtained from our choice of morphism~\eqref{eq:xi}.
Explicitly, we note that
\begin{multline*}
\V_{W'}(\wepsilon_{w(t,s)}\circ f_{s,t})\circ\gamma_{w(s,t)}=
\V_{W'}(\wepsilon_{w(t,s)})\circ \V_{W'}(f_{s,t})\circ\gamma_{w(s,t)}\\
=\V_{W'}(\wepsilon_{w(t,s)})\circ\gamma_{w(t,s)}\circ\varphi_{s,t}
\end{multline*}
by~\eqref{eqn:V-f-phi}.
Now we have
\[
(\V_{W'}(\wepsilon_{w(t,s)})\circ\gamma_{w(t,s)}\circ\varphi_{s,t}) (u_{w(s,t)})
= (\wepsilon_{w(t,s)} \langle -\ell(w(t,s)) \rangle ) \circ (\gamma_{w(t,s)}(u_{w(t,s)})).
\]
By definition, the morphism
\[
\gamma_{w(t,s)}(u_{w(t,s)}) : \wP_{W'} \to \wT_{w(t,s)}
\]
is the morphism
\[
(\zeta_t \wstar \zeta_s \wstar\cdots)\circ\delta^{\ell(w(s,t))-1}
\]
where we use the notation from~\eqref{eq:zeta s} and where, for any $n \geq 1$, the morphism
\[
\delta^n : \wP_{W'} \to \underbrace{\wP_{W'} \wstar \cdots \wstar \wP_{W'}}_{\text{$n+1$ times}}
\]
is the $n$-th comultiplication morphism for $\wP_{W'}$, see Proposition~\ref{prop: wP coalgebra}. By definition of $\wepsilon_{w(t,s)}$ (see~\eqref{eq:wepsilon w}), we deduce that
\begin{multline*}
(\V_{W'}(\wepsilon_{w(t,s)})\circ\gamma_{w(t,s)}\circ\varphi_{s,t}) (u_{w(s,t)}) \\
= (\wepsilon_{t}\langle-1\rangle\wstar\wepsilon_s\langle-1\rangle\wstar\cdots) \circ (\zeta_t \wstar \zeta_s \wstar\cdots)\circ\delta^{\ell(w(s,t))-1} \\
= ((\wepsilon_{t} \circ \zeta_t \langle-1\rangle)\wstar (\wepsilon_s \circ \zeta_s \langle-1\rangle) \wstar \cdots) \circ \delta^{\ell(w(s,t))-1}
\end{multline*}
by the exchange law~\eqref{eqn:exchange}. Using~\eqref{eqn:epsilon-zeta-xi} and the axioms of a coalgebra, we finally deduce that
\[
(\V_{W'}(\wepsilon_{w(t,s)})\circ\gamma_{w(t,s)}\circ\varphi_{s,t}) (u_{w(s,t)}) = \xi,
\]
hence that
\[
\left(\V_{W'}(\wepsilon_{w(t,s)}\circ f_{s,t})\circ\gamma_{w(s,t)}\right)(u_{w(s,t)}) = \xi.
\]


Similar considerations show that $\V(\wepsilon_{w(s,t)}) \left( \gamma_{w(s,t)} (u_{w(s,t)}) \right)=\xi$, which finally shows that
\begin{equation}
\label{eq:una igualdad}
\xi = \V_{W'}(\wepsilon_{w(t,s)}\circ f_{s,t}) \left( \gamma_{w(s,t)}(u_{w(s,t)}) \right) = \V(\wepsilon_{w(s,t)}) \left( \gamma_{w(s,t)} (u_{w(s,t)}) \right).
\end{equation}
Using~\eqref{eq:a},
we deduce that
\begin{multline*}
\For^\FM_\LM(\xi) = \For^{\FM}_{\LM} \left( \wepsilon_{w(t,s)}\circ f_{s,t} \circ ( \gamma_{w(s,t)}(u_{w(s,t)}) ) \right) \\
= \For^{\FM}_{\LM} \left( \wepsilon_{w(t,s)}\circ f_{s,t} \right) \circ \For^{\FM}_{\LM} \left(  \gamma_{w(s,t)}(u_{w(s,t)}) \right) \\
= a \cdot \For^{\FM}_{\LM} \left( \wepsilon_{w(s,t)} \right) \circ \For^{\FM}_{\LM} \left(  \gamma_{w(s,t)}(u_{w(s,t)}) \right) \\
= a \cdot \For^{\FM}_{\LM} \left( \wepsilon_{w(s,t)} \circ \gamma_{w(s,t)}(u_{w(s,t)}) \right) = a \cdot \For^\FM_\LM(\xi).
\end{multline*}
Since $\For^\FM_\LM(\xi) \neq 0$ (see the comments following~\eqref{eq:xi}) this implies that $a=1$, which finishes the proof.
\end{proof}

\begin{rmk}
\label{rmk:For-epsilon}
The equalities in~\eqref{eq:una igualdad} and the fact that $\For^\FM_\LM(\xi) \neq 0$ also imply that $\For^\FM_\LM(\wepsilon_{w(s,t)}) \neq 0$.
\end{rmk}

\begin{proof}[Proof of Proposition~\ref{prop:fst}]
What we have to prove is that if $f_{s,t}$ and $f'_{s,t}$ are two morphisms constructed as in Lemma~\ref{le:wepsilon} then $f_{s,t} = f'_{s,t}$. Now the $\bk$-vector space
$\Hom_{\FM(\fh,W)}(\wT_{\ws},\wT_{\wt})$ is  $1$-dimensional, e.g.~because it identifies with the space $\Hom_{\AbeBS(\fh^*,W)}(B^\wedge_{w(s,t)}, B^\wedge_{w(t,s)})$, which is $1$-dimensional as explained in~\S\ref{ss:overview}. This argument also shows that $f_{s,t}$ and $f'_{s,t}$ are nonzero; hence there exists $a \in \bk$ such that $f_{s,t}'=a \cdot f_{s,t}$. Using Lemma~\ref{le:wepsilon} we then obtain that
\[
\For^\FM_\LM(\wepsilon_{w(s,t)})=\For^\FM_\LM(\wepsilon_{w(t,s)}\circ f'_{s,t}) = a \cdot \For^\FM_\LM(\wepsilon_{w(t,s)}\circ f_{s,t}) = a \cdot \For^\FM_\LM(\wepsilon_{w(s,t)}).
\]
In view of Remark~\ref{rmk:For-epsilon} this implies that $a=1$, which finishes the proof.
\end{proof}

\section{Koszul duality}\label{sec:functor Phi}


We continue with the setting of Section~\ref{sec:construction}.

\subsection{Monoidal Koszul duality}
\label{ss:monoidal-Koszul}


The first formulation of our Koszul duality, which generalizes~\cite[Theorem~5.1]{amrw2}, is the following.

\begin{theorem}
\label{thm:Phi}
There exists an equivalence of monoidal categories
\[
\Phi:\DiagBS(\fh^*,W)\simto \TBS(\fh,W)
\]
which satisfies $\Phi\circ(1) = \langle1\rangle\circ\Phi$ and $\Phi(B_\uw^\wedge) = \wT_\uw$ for any $\uw \in \Exp(W)$.
\end{theorem}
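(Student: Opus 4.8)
The plan is to construct $\Phi$ by the universal property of $\DiagBS(\fh^*,W)$: since this category is presented by generators and relations, it suffices to specify the image of each generating object and each generating morphism, and then to verify that the defining relations are preserved. On objects we are forced to set $\Phi(B^\wedge_\uw) = \wT_\uw$ and, more generally, $\Phi(B^\wedge_\uw(n)) = \wT_\uw\langle n\rangle$, which is consistent with the monoidal structure because $\wT_{\uv\uw} = \wT_\uv \wstar \wT_\uw$ (using the bifunctoriality assumption of~\S\ref{ss:bifunctoriality}, so that $\wstar$ really is monoidal on $\TBS(\fh,W)$) and with the shift compatibility $\Phi\circ(1) = \langle 1\rangle\circ\Phi$. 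On the one-color generators (polynomials in $R^\wedge$, dots, trivalent vertices) one copies the construction of~\cite{amrw2} verbatim: all these morphisms live inside a single $\wT_s$ or products thereof and their images in $\FM(\fh,W)$ were already written down in~\cite[Chap.~5]{amrw1} (e.g.\ $\wepsilon_s$, the ``counit'' $\wT_s\to\wT_1\langle1\rangle$, the multiplication/comultiplication-type maps, and $\mu_{\wT_s}$ for the $R^\vee$-action). For the two-color generator attached to a pair $(s,t)$ with $m_{s,t}<\infty$, we use the morphism $f_{s,t}:\wT_{w(s,t)}\to\wT_{w(t,s)}$ constructed in Section~\ref{sec:construction} (Proposition~\ref{prop:fst}), whose well-definedness is exactly the content of that section.

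Having fixed the images of all generators, the core of the proof is to check that they satisfy every relation imposed in $\DiagBS(\fh^*,W)$. The strategy here is the one indicated in the introduction: transport the question to Abe's category via the functor $\V$. Concretely, for any finite parabolic subgroup $W'\subset W$ containing the reflections occurring in a given relation, the monoidal equivalence $\V_{W'}:\TBS(\fh_{|W'},W')\simto\AbeBS(\fh^*_{|W'},W')$ of Theorem~\ref{thm:V} identifies $\wT_\uw$ with $B^\wedge_\uw$ and, by~\eqref{eqn:V-f-phi}, carries $f_{s,t}$ to the Abe-category morphism $\varphi_{s,t}$ normalized by $\varphi_{s,t}(u_{w(s,t)})=u_{w(t,s)}$. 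One then checks, using the compatibilities already recorded (\eqref{eqn:epsilon-zeta-xi}, the coalgebra axioms for $\wP$ in Proposition~\ref{prop: wP coalgebra}, Lemma~\ref{le:gamma iso}, and the description of $\V$ on the one-color generators as in~\cite{amrw2}), that each generator of $\DiagBS(\fh^*,W)$ is sent under $\V_{W'}\circ\Phi$ to the image of the same generator under the equivalence $\DiagBS(\fh^*_{|W'},W')\simto\AbeBS(\fh^*_{|W'},W')$ of Theorem~\ref{thm:Abe-EW}. Since every relation of $\DiagBS(\fh^*,W)$ involves only finitely many colors, hence takes place inside some such $\DiagBS(\fh^*_{|W'},W')$, and since Theorem~\ref{thm:Abe-EW} guarantees those relations hold in $\AbeBS(\fh^*_{|W'},W')$, the relations are preserved after applying the faithful functor $\V_{W'}$; faithfulness of $\V_{W'}$ then lets us conclude that $\Phi$ respects the relations. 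This gives a well-defined monoidal functor $\Phi$ with the stated properties.

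It remains to prove that $\Phi$ is an equivalence. For essential surjectivity this is immediate: the objects $\wT_\uw\langle n\rangle$ are by definition the objects of $\TBS(\fh,W)$, and they are all in the image. For full faithfulness, the argument of~\cite{amrw2} applies: one compares, for fixed expressions $\uv,\uw$, the graded $R^\wedge$-module $\Hom^\bullet_{\DiagBS(\fh^*,W)}(B^\wedge_\uv,B^\wedge_\uw)$ (graded free of finite rank over $R^\wedge$, by~\cite[Corollary~6.14]{ew}) with $\bigoplus_n\Hom_{\TBS(\fh,W)}(\wT_\uv,\wT_\uw\langle n\rangle)$ (graded free of finite rank over $R^\vee$, by Proposition~\ref{prop: Hom FM between tilting}). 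The map induced by $\Phi$ is a morphism of graded modules over $R^\wedge$ (via the identification of $R^\wedge$ with the image of $R$-labeled boxes under $\Phi$, matching the left $R^\vee$-action on $\FM$-morphisms after applying $(-)^\wedge$, cf.~\S\ref{ss:gradings}), so by Lemma~\ref{le:technical} it is enough to check injectivity and surjectivity after applying $(-)\otimes_{R^\wedge}\bk$. On the source this yields $\Hom_{\oDiagBS(\fh^*,W)}$-type spaces, and on the target, by Proposition~\ref{prop: analogous of Hom FM between tilting}, it yields $\HHom_{\LM(\fh,W)}(\cT_\uv,\cT_\uw)^0_\bullet$; these are matched by the corresponding reduced statement, which one proves either directly or by reducing again to the finite parabolic case and invoking Theorem~\ref{thm:V}. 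Finally a graded-rank count, via the Hecke-algebra pairing as in Lemma~\ref{le:Hom of equal grk} (which shows $\grk_{R^\wedge}$ of both sides coincide), upgrades injectivity to bijectivity. I expect the main obstacle to be the bookkeeping in the relation-checking step: one must carefully match the images of the one-color generators under $\Phi$ with those under the Elias--Williamson$\,\to\,$Abe equivalence, and for the two-color relations (Zamolodchikov/cyclicity relations) the localization and interchange-law arguments of Section~\ref{sec:construction}, combined with the coalgebra structure of $\wP$, are exactly what make the comparison work in each finite parabolic $W'$.
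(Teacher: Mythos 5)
Your construction of $\Phi$ on objects and generating morphisms, and your strategy for checking the defining relations (reduce each relation to a finite parabolic $W'$, apply the fully faithful functor $\V_{W'}$ of Theorem~\ref{thm:V}, and compare with Abe's functor $\cF$ from~\cite[\S3.5]{abe2}), match the paper's argument in~\S\ref{ss:Phi}--\ref{ss:Phi well defined}. This part is fine; the paper invokes~\cite[Lemma~3.14]{abe2} rather than Theorem~\ref{thm:Abe-EW} but the content is the same.

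The full faithfulness step, however, has a genuine gap. After applying Lemma~\ref{le:technical} you reduce to checking that the map
\[
 \bigoplus_{n} \Hom_{\uDiagBS(\fh^*,W)}(\uB^\wedge_{\uv},\uB^\wedge_{\uw}(n)) \to \bigoplus_{n} \Hom_{\Tilt_{\LM}(\fh,W)}(\cT_{\uv}, \cT_{\uw} \langle n \rangle)
\]
induced by $\uPhi$ is an isomorphism, and you propose to do this either ``directly'' or ``by reducing again to the finite parabolic case and invoking Theorem~\ref{thm:V}.'' But the reduction to a finite parabolic only works when the set of simple reflections appearing in $\uv,\uw$ generates a finite subgroup of $W$ --- which holds for the \emph{relations} of $\DiagBS$ (each involves at most three colors generating a finite subgroup) but emphatically not for arbitrary pairs of expressions when $W$ is infinite. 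For example, three letters $s,t,u$ occurring in $\uv$ or $\uw$ may already generate an infinite parabolic, and then neither Theorem~\ref{thm:V} nor Lemma~\ref{le:Hom of equal grk} is available. The graded-rank count you invoke also does not rescue the argument on its own, since you still need a source of injectivity (or surjectivity) for the reduced map.

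The paper handles this by first proving Theorem~\ref{thm:varkappa}: the functor $\varkappa = \imath \circ \Kb(\uPhi)$ sends $\uDelta_w^\wedge$ to $\dDelta_w$ and $\unabla_w^\wedge$ to $\dnabla_w$ (proved by induction on length using the distinguished triangles of Lemma~\ref{le:tensoring by wT s} and the identification~\eqref{eqn:Cs-Ts}), and then using the recollement-type Lemma~\ref{lem:cone-D-N} together with Be{\u\i}linson's lemma~\cite[Lemma~5.6]{amrw2} to show $\varkappa$ is an equivalence. This is the ingredient that makes full faithfulness work for arbitrary (in particular infinite) $W$, and it does not appear in your proposal. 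You should incorporate an argument of this type rather than relying on the finite-parabolic reduction for the Hom-space comparison.
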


The monoidal functor $\Phi$ is constructed in~\S\ref{ss:Phi}--\ref{ss:Phi well defined}. By definition it satisfies $\Phi\circ(1) = \langle1\rangle\circ\Phi$ and $\Phi(B_\uw^\wedge) = \wT_\uw$ for any $\uw \in \Exp(W)$; in particular, it induces a bijection between the sets of objects in $\DiagBS(\fh^*,W)$ and in $\TBS(\fh,W)$. To prove that this functor is an equivalence, it therefore suffices to prove that it is fully faithful, which will be done
in~\S\ref{ss:Phi equivalence}. 

\begin{rmk}
In the case when $W$ is finite, Theorem~\ref{thm:Phi} can be deduced from the combination of Theorem~\ref{thm:V} and Theorem~\ref{thm:Abe-EW} (applied to $\fh^*$). The main point of the proof given below is that it applies also to infinite Coxeter groups.
\end{rmk}

\subsection{Construction of the functor \texorpdfstring{$\Phi$}{Phi}}
\label{ss:Phi}

In view of the definition of the category $\DiagBS(\fh^*,W)$, to define a monoidal functor from $\DiagBS(\fh^*,W)$ to $\TBS(\fh,W)$ which satisfies $\Phi\circ(1) = \langle1\rangle\circ\Phi$ it suffices to define the image of each object $B_s^\wedge$ ($s \in S$), of each generating morphism, and then to check that the images of these morphisms satisfy the defining relations of $\DiagBS(\fh^*,W)$ (in $\TBS(\fh,W)$), see~\S\ref{ss:EW-category}. In this subsection we explain how to define the images of the generating objects and morphisms, and in~\S\ref{ss:Phi well defined} we will show that these images satisfy the required relations.

First, our functor $\Phi$ will send $B_s^\wedge$ to $\wT_s$, for any $s \in S$. By monoidality, for any expression $\uw$, the image of $B^\wedge_\uw$ will then be $\wT_\uw$.


\subsubsection{Polynomials}
\label{sss: Polynomials} 

As we noted in \S\ref{ss:FM}, we have a graded algebra morphism $\mu_{\wT_1}: R^\vee\to\EEnd_{\FM(\fh,W)}(\wT_1)$. Thus, for $x \in R^\wedge_{m}$ we can set
    \[
    \Phi\left(
    \begin{tikzpicture}[thick,scale=0.07,baseline=-0.5ex]
      \node at (0,0) {$x$};
      \draw[dotted] (-5,-5) rectangle (5,5);
    \end{tikzpicture}
    \right):=\mu_{\wT_1}(x):\wT_1\to\wT_1\langle m\rangle.
\]

\subsubsection{Dot morphisms}
\label{sss: Dot morphisms} 

Let $s \in S$.
Recall that we have considered a certain morphism $\wepsilon_s$ in~\eqref{eqn:wepsilon-def}. Consider also the morphism
\[
\weta_s:\wT_1\langle-1\rangle\to\wT_s
\]
defined in~\cite[\S5.3.4]{amrw1}.
We set
\[
\Phi\left(\begin{tikzpicture}[thick,xscale=0.07,yscale=-0.07,baseline=-0.5ex]
      \draw (0,-5) to (0,0);
      \node at (0,0) {$\bullet$};
            \node at (0,-6.7) {\tiny $s$};
    \end{tikzpicture}\right):=\weta_s
    \quad\mbox{and}\quad
    \Phi\left(
    \begin{tikzpicture}[thick,scale=0.07,baseline=-0.5ex]
      \draw (0,-5) to (0,0);
      \node at (0,0) {$\bullet$};
            \node at (0,-6.7) {\tiny $s$};
    \end{tikzpicture} 
    \right):=\wepsilon_s.
\]


\subsubsection{Trivalent vertices}
\label{sss: Trivalent vertices} 

Let again $s \in S$.
We note that the proof of~\cite[Lemma 4.2]{amrw2} is completely diagrammatic, hence that it also applies in our present context. It follows that there exists a unique morphism
\[
\wb_1:\wT_s\to\wT_s\wstar\wT_s\langle-1\rangle, \quad \text{resp.} \quad \wb_2:\wT_s\wstar\wT_s\to\wT_s\langle-1\rangle,
\]
which satisfies
\begin{equation}
\label{eq:trivalent}
(\id_{\wT_s}\wstar\wepsilon_s)\circ\wb_1=\id_{\wT_s},
\quad\text{resp.}\quad
\wb_2\circ(\id_{\wT_s}\wstar\weta_s)=\id_{\wT_s},
\end{equation}
for any $s\in S$. We set
\[
\Phi\left(        \begin{tikzpicture}[thick,baseline=-0.5ex,scale=0.07]
      \draw (-4,5) to (0,0) to (4,5);
      \draw (0,-5) to (0,0);
      \node at (0,-6.7) {\tiny $s$};
      \node at (-4,6.7) {\tiny $s$};
      \node at (4,6.7) {\tiny $s$};      
    \end{tikzpicture}
    \right)=\wb_1
    \quad \text{and} \quad
\Phi\left(
    \begin{tikzpicture}[thick,baseline=-0.5ex,scale=-0.07]
      \draw (-4,5) to (0,0) to (4,5);
      \draw (0,-5) to (0,0);
      \node at (0,-6.7) {\tiny $s$};
      \node at (-4,6.7) {\tiny $s$};
      \node at (4,6.7) {\tiny $s$};    
    \end{tikzpicture}
    \right)=\wb_2.
\]

\subsubsection{\texorpdfstring{$2m_{st}$}{2mst}-valent vertices}
\label{sss: 2mst} 

Let $s,t\in S$ be distinct simple reflections generating a finite subgroup of $W$. We set
%
\[
\Phi\left(            \begin{tikzpicture}[yscale=0.5,xscale=0.3,baseline,thick]
\draw (-2.5,-1) to (0,0) to (-1.5,1);
\draw (-0.5,-1) to (0,0);
\draw[red] (-1.5,-1) to (0,0) to (-2.5,1);
\draw[red] (0,0) to (-0.5,1);
\node at (-2.5,-1.3) {\tiny $s$\vphantom{$t$}};
\node at (-1.5,1.3) {\tiny $s$\vphantom{$t$}};
\node at (-1.5,-1.3) {\tiny $t$};
\node at (-2.5,1.3) {\tiny $t$};
\node at (0.6,-0.7) {\small $\cdots$};
\node at (0.6,0.7) {\small $\cdots$};
\end{tikzpicture}
    \right)=f_{s,t},
\]
where the right-hand side is as in Proposition~\ref{prop:fst}.
    

\subsection{The functor \texorpdfstring{$\Phi$}{Phi} is well defined}
\label{ss:Phi well defined}

To complete the construction of $\Phi$ we now need to check that the morphisms considered in~\S\ref{ss:Phi} satisfy the defining relations of the category $\DiagBS(\fh^*,W)$. Our proof of this fact follows
the same strategy as in \cite[\S4.3]{amrw2}.

Let us fix an arbitrary relation to be checked. There exists a subset $S' \subset S$ generating a finite subgroup $W'$ of $W$ such that this relation involves only words in $S'$. (The subset $S'$ has cardinality at most $3$, but this will not be important for our purposes.) Replacing $(W,S)$ by $(W',S')$, we can therefore assume that $W$ is finite.
Under this assumption we can consider a functor $\V$ as in~Section~\ref{sec:functor V}. Since this functor is fully faithful, the relation under consideration holds in $\TBS(\fh,W)$ if and only if it holds after applying $\V$. 
To check this we will first compute the images under $\V\circ\Phi$ of our generating morphisms, see~\S\ref{sss:relation-poly}--\ref{sss:relations-2m}. 
These computations will show that the isomorphisms $\gamma_\uw$ (see~\eqref{eq:gamma w}) define an isomorphism of monoidal functors between $\V\circ\Phi$ and the functor $\cF:\DiagBS(\fh^*,W)\to\AbeBS(\fh^*,W)$ considered in~\cite[\S3.5]{abe2}. The fact that the relation under consideration holds will then follow from the fact that $\cF$ is indeed well defined, see~\cite[Lemma~3.14]{abe2}.

\subsubsection{Polynomials}
\label{sss:relation-poly}

If $x \in R^\wedge_{m}$, then the morphism
\[
\V\circ\Phi\left(\begin{tikzpicture}[thick,scale=0.05,baseline=-0.5ex]
      \node at (0,0) {$x$};
      \draw[dotted] (-5,-5) rectangle (5,5);
    \end{tikzpicture}\right):\V(\wT_1)\to\V(\wT_1\langle m\rangle)
\]
    is given by
\[    
    f\mapsto\mu_{\wT_1}(x)\circ f=f\circ\mu_{\wP}(x),
 \]
 i.e.~by the action of $x$ on the $R^\wedge$-module $\V(\wT_1) = R^\wedge$.
 

\subsubsection{The upper dot}
\label{sss:image-upper-dot}

By definition
we have that
\[
\V\circ\Phi\left(
    \begin{tikzpicture}[thick,scale=0.07,baseline=-0.5ex]
      \draw (0,-5) to (0,0);
      \node at (0,0) {$\bullet$};
      \node at (0,-6.7) {\tiny $s$};
    \end{tikzpicture} 
    \right)=\V(\wepsilon_s):\V(\wT_s)\to\V(\wT_1\langle1\rangle).
\]
Recall the identification $\gamma_s:B^\wedge_s \simto \V(\wT_s)$ in \eqref{eq:gamma s}, which satisfies $\gamma_s(u_s)=\zeta_s$.
The morphism
\[
\V(\wepsilon_s) \circ \gamma_s : B^\wedge_s \to \V(\wT_1\langle1\rangle) = R^\wedge (1)
\]
is a morphism of $R^\wedge$-modules, which sends the generator $u_s$ to
\[
\wepsilon_s\langle-1\rangle\circ\zeta_s = \xi,
\]
see~\eqref{eqn:epsilon-zeta-xi}, i.e.~to $\gamma_\varnothing(u_\varnothing)$. It therefore coincides with the ``multiplication'' morphism
\[
m_s:B_s^\wedge\to R^\wedge(1)
\]
defined by $a\otimes b\mapsto ab$, see~\cite[\S3.3]{abe1}.

%
%
    
\subsubsection{The lower dot}
\label{sss:image-lower-dot}
    
We now analyze
\[
\V\circ\Phi\left(
    \begin{tikzpicture}[thick,scale=0.07,yscale=-1,baseline=-0.5ex]
      \draw (0,-5) to (0,0);
      \node at (0,0) {$\bullet$};
      \node at (0,-6.7) {\tiny $s$};
    \end{tikzpicture} 
    \right)=\V(\weta_s):\V(\wT_1\langle-1\rangle)\to\V(\wT_s).
    \]
    By \cite[Lemma 5.3.2 $(1)$]{amrw1} we have $\wepsilon_s\circ\weta_s=\mu_{\wT_1}(\alpha_s^\vee)$, and hence
    \[
    \V(\wepsilon_s)\circ\V(\weta_s)=\alpha_s^\vee\cdot\id:\V(\wT_1)\to\V(\wT_1).
    \]
    The morphism $\beta'_s := \gamma_s^{-1} \circ \V(\weta_s) \circ \gamma_\varnothing$ therefore satisfies $m_s\circ\beta'_s=\alpha_s^\vee\cdot\id_{R^\wedge}$.
Now the $\bk$-vector space $\Hom_{\AbeBS(\fh^*,W)}(R^\wedge,B_s^\wedge(1))$ is $1$-dimensional, and generated by the morphism $\beta_s$ such that $\beta_s(1)=\delta_s\otimes 1-1\otimes s(\delta_s)$ where $\delta_s\in V$ is an element which satisfies $\langle\alpha_s,\delta_s\rangle=1$, cf.~\eqref{eq:Bs in Abe}. This generator is the unique vector such that $m_s\circ\beta_s=\alpha_s^\vee\cdot\id_{R^\wedge}$, so that $\beta_s=\beta_s'$.


\subsubsection{Trivalent vertices}

Let $s\in S$. The spaces $\Hom_{\AbeBS(\fh^*,W)}(B^\wedge_s,B^\wedge_{(s,s)}(-1))$ and $\Hom_{\AbeBS(\fh^*,W)}(B^\wedge_{(s,s)},B^\wedge_s(-1))$ are $1$-dimensional by  \cite[Corollary 4.7]{abe1} and an easy computation in the Hecke algebra. As explained e.g.~in~\cite[\S3.5]{abe2}, we can take as generators of these spaces the morphisms
\[
t_1:f\otimes g\mapsto f\otimes 1\otimes g
\quad\mbox{and}\quad
t_2:f\otimes g\otimes h\mapsto f\partial_s(g)\otimes h,
\]
where we use the identification 
\[
B_{(s,s)}^\wedge = B_s^\wedge \otimes_{R^\wedge} B_s^\wedge = R^\wedge \otimes_{(R^\wedge)^s} R^\wedge \otimes_{(R^\wedge)^s} R^\wedge \langle -2 \rangle,
\]
and $\partial_s$ is the Demazure operator associated with $s$, see~\cite[\S 3.3]{ew}

We have
\[
(\id_{B_s^\wedge} \otimes_{} m_s(-1))\circ t_1=\id_{B_s^\wedge} \quad \text{and} \quad t_2(1)\circ(\id_{B_s^\wedge}\otimes_{} \beta_s)=\id_{B_s^\wedge}.
\]
On the other hand, by definition (see~\eqref{eq:trivalent}), we have
\[
(\id_{\V(\wT_s)}\otimes \V(\wepsilon_s)(-1))\circ \V(\wb_1)=\id_{\V(\wT_s)} \quad \text{and}  \quad \V(\wb_2)(1)\circ(\id_{\V(\wT_s)}\otimes\V(\weta_s))=\id_{\V(\wT_s)}.
\]
Using the descriptions in~\S\S\ref{sss:image-upper-dot}--\ref{sss:image-lower-dot}, we deduce that
\[
\V\circ\Phi\left(
    \begin{tikzpicture}[thick,baseline=-0.5ex,scale=-0.07,yscale=-1]
      \draw (-4,5) to (0,0) to (4,5);
      \draw (0,-5) to (0,0);
      \node at (0,-6.7) {\tiny $s$};
      \node at (-4,6.7) {\tiny $s$};
      \node at (4,6.7) {\tiny $s$};    
    \end{tikzpicture}
    \right) = \gamma_{(s,s)} \circ t_1 \circ \gamma_s^{-1}
\]
and
%
%
\[
    \V\circ\Phi\left(
    \begin{tikzpicture}[thick,baseline=-0.5ex,scale=-0.07]
      \draw (-4,5) to (0,0) to (4,5);
      \draw (0,-5) to (0,0);
      \node at (0,-6.7) {\tiny $s$};
      \node at (-4,6.7) {\tiny $s$};
      \node at (4,6.7) {\tiny $s$};    
    \end{tikzpicture}
    \right) = \gamma_{s} \circ t_2 \circ (\gamma_{(s,s)})^{-1}.
    \]
    
    
\subsubsection{\texorpdfstring{$2m_{st}$}{2mst}-valent vertices}
\label{sss:relations-2m}

Let $s,t\in S$ be distinct simple reflections generating a finite subgroup of $W$. By the very definition of $f_{s,t}$ (see Lemma~\ref{prop:fst}) we have
\[
 \V\circ\Phi\left(            \begin{tikzpicture}[yscale=0.5,xscale=0.3,baseline,thick]
\draw (-2.5,-1) to (0,0) to (-1.5,1);
\draw (-0.5,-1) to (0,0);
\draw[red] (-1.5,-1) to (0,0) to (-2.5,1);
\draw[red] (0,0) to (-0.5,1);
\node at (-2.5,-1.3) {\tiny $s$\vphantom{$t$}};
\node at (-1.5,1.3) {\tiny $s$\vphantom{$t$}};
\node at (-1.5,-1.3) {\tiny $t$};
\node at (-2.5,1.3) {\tiny $t$};
\node at (0.6,-0.7) {\small $\cdots$};
\node at (0.6,0.7) {\small $\cdots$};
\end{tikzpicture}
    \right) = \V(f_{s,t})= \gamma_{w(t,s)} \circ \varphi_{s,t} \circ (\gamma_{w(s,t)})^{-1}.
\]

\subsection{Triangulated Koszul duality}
\label{ss:varkappa}


We have now constructed a monoidal functor $\Phi$ as in Theorem~\ref{thm:Phi}. We will still denote by $\Phi$ the induced functor from $\DiagBS^\oplus(\fh^*,W)$ to $\TBS^\oplus(\fh,W)$. In order to prove that this functor is an equivalence, we will first study a variant of this functor obtained by ``killing the right action of $R^\wedge$.'' Namely, recall the constructions of~\S\ref{ss:LE}.
By definition of $\uDiagBS^\oplus(\fh^*,W)$, there exists a unique additive functor
\[
\uPhi:\uDiagBS^\oplus(\fh^*,W)\to\Tilt_\LM^\oplus(\fh,W)
\]
such that
\[
 \For^\FM_\LM\circ\Phi = \uPhi \circ \For^{\BE}_{\LE}.
\]
Recall also the equivalence~\eqref{eq:KTiltLM+-LM}, which we will here denote by $\imath$. Finally, recall that we have an action of the monoidal category $(\TBS(\fh,W),\wstar)$ on the category $\Tilt_{\LM}(\fh,W)$, see Remark~\ref{rmk:bifunctoriality}\eqref{it-action-Tilt}.

%

The following statement is an analogue of~\cite[Theorem 5.2 \& Proposition 5.5]{amrw2}.

\begin{theorem}
\label{thm:varkappa}
The functor 
\[
\varkappa:=\imath \circ \Kb(\uPhi):\LE(\fh^*,W)\to\LM(\fh,W)
\]
is an equivalence of triangulated categories. It satisfies
$\varkappa\circ(1)=\langle1\rangle\circ\varkappa$, and for any $\uv \in \Exp(W)$ and any $w \in W$ we have
\[
\varkappa(\uB_\uv^\wedge) \cong \cT_\uv,\quad\varkappa(\uDelta_w^\wedge)\simeq\dDelta_w,\quad\varkappa(\unabla_w^\wedge)\simeq\dnabla_w.
\]
Finally, for any $\cF\in\DiagBS^\oplus(\fh^*,W)$ and $\cG\in\LE(\fh^*,W)$ we have a canonical isomorphism
\begin{equation}
\label{eqn:varkappa-convolution}
\varkappa(\cF\ustar\cG) \cong \imath \left( \Phi(\cF) \wstar \imath^{-1}(\cG) \right).
\end{equation}
%
\end{theorem}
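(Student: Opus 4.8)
The plan is to follow the strategy of~\cite[\S5]{amrw2}, using the various compatibilities between $\Phi$, $\For^\FM_\LM$, $\imath$ and the convolution operations that have already been assembled above.

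\textbf{Setting up the proof.} First I would record the commutative diagram relating $\Phi$ and $\uPhi$ used to define $\varkappa$. Applying $\Kb(-)$ to the defining equation $\For^\FM_\LM\circ\Phi = \uPhi\circ\For^\BE_\LE$ and composing with $\imath$, one obtains a commuting square relating $\varkappa$, the forgetful functor $\For^\BE_\LM:\BE(\fh^*,W)\to\LM(\fh^*,W)$ (or rather its right-equivariant analogue) and the functor $\Kb(\Phi):\BE(\fh^*,W)\to\Kb\TBS^\oplus(\fh,W)$, followed by $\imath$. The key input from earlier in the paper is the isomorphism $\For^\FM_\LM(\cF\wstar\cG)\simeq\cF\wstar\For^\FM_\LM(\cG)$ from~\eqref{eq:wstar and For} together with the module-category structures of~\S\ref{ss:LM} and Remark~\ref{rmk:bifunctoriality}\eqref{it-action-Tilt}; these imply that $\uPhi$ is a functor of module categories over $(\TBS^\oplus(\fh,W),\wstar)$ acting through $\Phi$ on the $\DiagBS^\oplus(\fh^*,W)$-side. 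Passing to homotopy categories and using that $\imath=\eqref{eq:KTiltLM+-LM}$ intertwines $\wstar$ with $\ustar$, this yields exactly the isomorphism~\eqref{eqn:varkappa-convolution}; this is the most formal of the three assertions and I would dispatch it first, since it will be used to compute $\varkappa$ on standard and costandard objects.

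\textbf{Values on distinguished objects.} The identity $\varkappa(\uB_\uv^\wedge)\cong\cT_\uv$ is immediate from $\Phi(B_\uv^\wedge)=\wT_\uv$ and $\For^\FM_\LM(\wT_\uv)=\cT_\uv$ (Theorem~\ref{thm:tilting FM}), after unwinding the definition of $\varkappa$ through $\imath$. For the (co)standard objects I would argue as in~\cite[Prop.~5.5]{amrw2}: by~\eqref{eqn:varkappa-convolution} and the fact that $\Delta_\uw^\wedge$, $\nabla_\uw^\wedge$ are iterated $\ustar$-products of the length-one complexes $\Delta_s^\wedge$, $\nabla_s^\wedge$ from~\eqref{eq:Delta s}, it suffices to compute $\varkappa$ on $\Delta_s^\wedge$ and $\nabla_s^\wedge$. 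These are two-term complexes of Bott--Samelson objects with differential a dot morphism, so $\Phi$ sends them to the two-term complexes $[\wT_s\xrightarrow{\wepsilon_s}\wT_1\langle1\rangle]$ and $[\wT_1\langle-1\rangle\xrightarrow{\weta_s}\wT_s]$ in $\Kb\TBS^\oplus(\fh,W)$; applying $\For^\FM_\LM$ and then $\imath^{-1}$, I would identify these — using Lemma~\ref{le:tensoring by wT s} and the nonvanishing statements on $\wepsilon_s$, $\weta_s$ recorded in Remark~\ref{rmk:For-epsilon} and~\cite[\S5.3.4]{amrw1} — with $\dDelta_s$ and $\dnabla_s$ respectively. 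Then $\varkappa(\uDelta_w^\wedge)\simeq\dDelta_w$ and $\varkappa(\unabla_w^\wedge)\simeq\dnabla_w$ follow by induction on $\ell(w)$ using~\eqref{eq:ARV Prop 6.11}, Lemma~\ref{le:tensoring by wT s} and~\eqref{eqn:varkappa-convolution}. The relation $\varkappa\circ(1)=\langle1\rangle\circ\varkappa$ is inherited from $\Phi\circ(1)=\langle1\rangle\circ\Phi$.

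\textbf{Equivalence.} This is the main point. The functor $\uPhi$ sends $\uB_\uv^\wedge$ to $\cT_\uv$, hence induces a bijection on isomorphism classes of indecomposable objects of the additive categories $\uDiagBS^\oplus(\fh^*,W)$ and $\Tilt_\LM^\oplus(\fh,W)$ (using the classifications in~\cite[\S8.1]{arv}, Corollary~\ref{cor:Tilt LM indecomposable}, and Theorem~\ref{thm:tilting FM}). To see it is an equivalence of additive categories it remains to check full faithfulness on morphism spaces, which I expect to be the main obstacle: one must compare $\Hom_{\uDiagBS}(\uB_\uv^\wedge,\uB_\uw^\wedge)$ with $\Hom_{\Tilt_\LM}(\cT_\uv,\cT_\uw)$. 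The strategy, as in~\cite[Theorem~5.2]{amrw2}, is to lift to the free-monodromic / biequivariant level where full faithfulness of $\Phi$ is already known (from~\S\ref{ss:Phi equivalence}, which Theorem~\ref{thm:Phi} invokes), and then use Proposition~\ref{prop: analogous of Hom FM between tilting} (the ``$\otimes_{R^\vee}\bk$'' comparison) together with the analogous statement on the diagrammatic side (killing the right $R^\wedge$-action), combined with Lemma~\ref{le:technical} (graded Nakayama) to transfer an isomorphism of graded free modules to an isomorphism of their specializations. Once $\uPhi$ is shown to be an equivalence of additive categories, $\Kb(\uPhi)$ is an equivalence of triangulated categories, and composing with the triangulated equivalence $\imath$ of~\eqref{eq:KTiltLM+-LM} shows $\varkappa$ is an equivalence. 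I would close by remarking that, since $\Kb(\uPhi)$ commutes with $[1]$ and with $\langle1\rangle$ by construction, $\varkappa$ is compatible with all shift functors as claimed.
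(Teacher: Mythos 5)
Your proposal is close to the paper's strategy for the computation of $\varkappa$ on the distinguished objects, but the core of the argument — that $\varkappa$ is an equivalence — is circular as stated, and this circularity is exactly what the paper's proof is engineered to avoid.

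You propose to show $\uPhi$ is an additive equivalence by ``lifting to the free-monodromic / biequivariant level where full faithfulness of $\Phi$ is already known (from~\S\ref{ss:Phi equivalence}).'' But look at the logical order in the paper: Theorem~\ref{thm:Phi} asserts $\Phi$ is an equivalence, but~\S\S\ref{ss:Phi}--\ref{ss:Phi well defined} only \emph{construct} $\Phi$. The full faithfulness of $\Phi$ is established afterwards in~\S\ref{ss:Phi equivalence}, and the last line of that argument invokes Theorem~\ref{thm:varkappa} — the very statement you are trying to prove. So you cannot assume full faithfulness of $\Phi$ here; you would be assuming what you need to prove. (Also note that, even setting aside the circularity, the direction of the Nakayama step you describe is backwards: Lemma~\ref{le:technical} lets you deduce an isomorphism of graded free $R^\vee$-modules from an isomorphism after $\otimes_{R^\vee}\bk$, not the other way around; passing from the free level to the specialized level is immediate functoriality and needs no lemma.) The paper's proof circumvents this by not trying to show $\uPhi$ is fully faithful directly: instead it computes $\varkappa(\uDelta_w^\wedge)\simeq\dDelta_w$ and $\varkappa(\unabla_w^\wedge)\simeq\dnabla_w$, checks that $\varkappa$ induces isomorphisms on the $1$-dimensional (or zero) Hom-spaces between (co)standard objects — using Lemma~\ref{lem:cone-D-N} and the recollement structure to ensure nonzero maps stay nonzero — and then concludes via Be{\u\i}linson's lemma~\cite[Lemma~5.6]{amrw2}, together with the fact that (co)standard objects generate. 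This is genuinely different from, and not interchangeable with, your approach.

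On the computation of $\varkappa$ on (co)standard objects there is also a smaller gap: you want to apply~\eqref{eqn:varkappa-convolution} with $\cF=\Delta_s^\wedge$ and then conclude $\varkappa(\uDelta_\uw^\wedge)$ by iterating. But~\eqref{eqn:varkappa-convolution} is stated only for $\cF\in\DiagBS^\oplus(\fh^*,W)$, and $\Delta_s^\wedge$ is a length-two complex, not an object of that additive category. The paper's workaround is to tensor the triangle $\Delta_s^\wedge\to B_s^\wedge\to\Delta_1^\wedge(1)$ by $\Delta_{sw}^\wedge$ so that the middle term becomes $B_s^\wedge\ustar\Delta_{sw}^\wedge$, to which~\eqref{eqn:varkappa-convolution} \emph{does} apply, and then to compare with the triangle from Lemma~\ref{le:tensoring by wT s} using the dimension count~\eqref{eqn:Hom-Ts-Delta}. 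This also requires the identification~\eqref{eqn:Cs-Ts} of $\imath\circ\Kb(\wT_s\wstar(-))\circ\imath^{-1}$ with the triangulated functor $\wT_s\wstar(-)$, a step your sketch elides. I would rework the (co)standard computation along those lines and replace the equivalence argument with the Be{\u\i}linson-lemma route.
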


\begin{proof}
The facts that $\varkappa\circ(1)=\langle1\rangle\circ\varkappa$ and that $\varkappa(\uB_\uv^\wedge) \cong \cT_\uv$ for any expression $\uv$
are true by construction. The isomorphism~\eqref{eqn:varkappa-convolution} is also clear by construction and monoidality of $\Phi$. Next we will prove that $\varkappa(\uDelta_w^\wedge)\simeq\dDelta_w$ for any $w \in W$, following the strategy of \cite[\S 5.2]{amrw2}. The proof that $\varkappa(\unabla_w^\wedge)\simeq\dnabla_w$ is similar, and left to the reader. 

First, for $s\in S$ we consider the functor
\[
C_s':= \imath \circ \Kb(\wT_s\wstar(-)) \circ \imath^{-1} : \LM(\fh,W)\to \LM(\fh,W).
\]
The same construction can be carried out with $\wT_1$ instead of $\wT_s$, providing a functor isomorphic to the identity. These constructions are functorial and we have a morphism of functor $\widetilde{\epsilon}_s:C_s'\to\id\langle1\rangle$ induced by the morphism $\wepsilon_s$, see~\eqref{eqn:wepsilon-def}. As in \cite[Lemma 5.3]{amrw2} one sees that the morphism $\widetilde{\epsilon}_s(\dDelta_w):C_s'(\dDelta_w)\to\dDelta_w\langle1\rangle$ is nonzero for all $w\in W$. On the other hand, recall the functor considered in Lemma~\ref{le:tensoring by wT s is exact}. As in~\cite{amrw2}, there exists an isomorphism of functors
\begin{equation}
\label{eqn:Cs-Ts}
 C_s' \cong \wT_s \wstar (-).
\end{equation}

Now we prove the desired claim by induction on $\ell(w)$. For $w=1$, this claim is true because $\uDelta^\wedge_1=\uB^\wedge_1$ and $\dDelta_1=\cT_1$ by definition.
Let $w\in W$ and $s\in S$ such that $sw<w$, and assume the claim is known for $sw$. By the explicit description of $\Delta_s^\wedge$ in \eqref{eq:Delta s},
we have a distinguished triangle 
\[
\Delta_s^\wedge \to B^\wedge_s\to \Delta^\wedge_{1}(1)\xrightarrow{[1]}
\]
in $\BE(\fh,W)$
where the second morphism is given by the upper dot. Tensoring with $\Delta^\wedge_{sw}$ on the right and using~\cite[Proposition 6.11]{arv}, we obtain a distinguished triangle 
\[
\Delta^\wedge_w \to B^\wedge_s\ustar\Delta^\wedge_{sw}\to\Delta^\wedge_{sw}(1)\xrightarrow{[1]},
\]
where the second morphism is the upper dot tensored by $\id_{\Delta^\wedge_{sw}}$. Applying $\varkappa\circ\For^\BE_{\LE}$ and using~\eqref{eqn:varkappa-convolution} and~\eqref{eqn:Cs-Ts}, we deduce a distinguished triangle 
\[
\varkappa(\uDelta^\wedge_w)\to \wT_s\wstar\varkappa(\uDelta^\wedge_{sw})\to\varkappa(\uDelta^\wedge_{sw})\langle1\rangle\xrightarrow{[1]},
\]
where the second morphism is induced by $\widetilde{\epsilon}_s$ via the identification~\eqref{eqn:Cs-Ts}.
Using the inductive hypothesis, we can rewrite this triangle as
\[
\varkappa(\uDelta^\wedge_w)\to \wT_s\wstar\dDelta_{sw} \xrightarrow{f} \dDelta_{sw}\langle1\rangle\xrightarrow{[1]}
\]
with $f\neq0$.
We compare this triangle with the triangle
\begin{equation}
\label{eq:a triangle}
\dDelta_w\to \wT_s\wstar\dDelta_{sw}\xrightarrow{g} \dDelta_{sw}\langle1\rangle \xrightarrow{[1]}
\end{equation}
provided by Lemma \ref{le:tensoring by wT s}\eqref{it:tensor-Ts-1}. Here we also have $g \neq 0$. We will prove below that
\begin{equation}
\label{eqn:Hom-Ts-Delta}
\dim_\bk\Hom_{\LM(\fh,W)}\left(\wT_s\wstar\dDelta_{sw},\dDelta_{sw}\langle1\rangle\right)=1;
\end{equation}
this will imply that $f$ and $g$ are multiples of each other, hence that $\varkappa(\uDelta_w) \cong \dDelta_w$, which will finish the proof.

In order to prove~\eqref{eqn:Hom-Ts-Delta}, we observe that
\begin{multline*}
\dim_\bk\Hom_{\LM(\fh,W)}\left(\dDelta_{sw}\langle1\rangle,\dDelta_{sw}\langle1\rangle\right)=1
\quad\mbox{and}\\
\Hom_{\LM(\fh,W)}\left(\dDelta_{w},\dDelta_{sw}\langle1\rangle\right)=0=\Hom_{\LM(\fh,W)}\left(\dDelta_{w}[1],\dDelta_{sw}\langle1\rangle\right),
\end{multline*}
where the first two equalities follow from the properties of the standard objects in a highest weight category, and the third one is a consequence of the axioms of a t-structure as the standard objects belong to the heart. Therefore, we obtain~\eqref{eqn:Hom-Ts-Delta}
by applying $\Hom_{\LM(\fh,W)}\left(-,\dDelta_{sw}\langle1\rangle\right)$ to the triangle \eqref{eq:a triangle}. 

Finally, we prove that $\varkappa$ is an equivalence of categories. 
For that, we first notice that, for any $v,w \in W$ and $n,m \in W$, $\varkappa$ induces an isomorphism
\[
\Hom_{\LE(\fh^*,W)}(\uDelta^\wedge_w,\unabla^\wedge_v(n)[m])
\simto
\Hom_{\LM(\fh,W)}(\dDelta_w,\dnabla_v(n)[m]).
\]
In fact these spaces are zero except when $v=w$ and $n=m=0$ in which case they are $1$-dimensional, cf.~\cite[$(9.2)$]{arv}. To prove the claim it therefore suffices to prove that if $g : \uDelta_w^\wedge \to \unabla_v^\wedge$ is a nonzero morphism then $\varkappa(g) \neq 0$. However, it follows from Lemma~\ref{lem:cone-D-N} that the cone of $g$ belongs to the triangulated subcategory of $\LE(\fh^*,W)$ generated by the objects $\uDelta_v^\wedge \langle n \rangle$ with $v \in W$ satisfying $v<w$ and $n \in \Z$, hence the cone of $\varkappa(g)$ belongs to the triangulated subcategory of $\LM(\fh,W)$ generated by the objects $\dDelta_v\langle n \rangle$ with $v \in W$ satisfying $v<w$ and $n \in \Z$. Since $\dnabla_w \oplus \dDelta_w[1]$ does not belong to this subcategory (again, by the recollement formalism), it follows that $\varkappa(g) \neq 0$.

Now, recall that the objects $(\uDelta^\wedge_w \langle n \rangle : w \in W, \, n \in \Z)$ generate the triangulated category $\LE(\fh^*,W)$, and similarly for the objects $(\unabla^\wedge_w \langle n \rangle : w \in W, \, n \in \Z)$, see e.g.~\cite[Lemma~6.9]{arv}. In view of~\cite[Lemma 5.6]{amrw2} (a version of Be{\u\i}linson's lemma), the property checked above therefore implies that $\varkappa$ is fully faithful. Since its essential image contains the objects $(\dDelta_w \langle n \rangle : w \in W, \, n \in \Z)$, which generate $\LM(\fh,W)$ as a triangulated category, this functor is also essentially surjective, which finishes the proof.
%
\end{proof}

\subsection{Full faithfulness of \texorpdfstring{$\Phi$}{Phi}}
\label{ss:Phi equivalence}

As explained in~\S\ref{ss:monoidal-Koszul}, to complete the proof of Theorem \ref{thm:Phi} we only have to prove that $\Phi$ is fully faithful, i.e.~that for any expressions $\uv,\uw$ this functor induces an isomorphism of graded $R^\wedge$-modules
\[
 \bigoplus_{n \in \Z} \Hom_{\DiagBS(\fh^*,W)}(B^\wedge_{\uv},B^\wedge_{\uw}(n)) \simto \bigoplus_{n \in \Z} \Hom_{\TBS(\fh,W)}(\wT_{\uv}, \wT_{\uw} \langle n \rangle).
\]
Now, both sides are graded free of finite rank over $R^\wedge$ by \cite[Corollary~6.14]{ew} and Proposition \ref{prop: Hom FM between tilting}. To prove the claim, using Lemma \ref{le:technical} it therefore suffices to prove that the morphism obtained after applying $(-) \otimes_{R^\wedge} \bk$ is an isomorphism. However, by Proposition \ref{prop: Hom FM between tilting} the latter morphism identifies with the morphism
\[
 \bigoplus_{n \in \Z} \Hom_{\uDiagBS(\fh^*,W)}(\uB^\wedge_{\uv},\uB^\wedge_{\uw}(n)) \to \bigoplus_{n \in \Z} \Hom_{\Tilt_{\LM}(\fh,W)}(\cT_{\uv}, \cT_{\uw} \langle n \rangle)
\]
induced by $\uPhi$; hence it is an isomorphism by Theorem \ref{thm:varkappa}.


\subsection{Self-duality}
\label{ss:self duality}

We now prove an analogue of~\cite[Theorem~5.7]{amrw2}. (This version is called ``self duality'' because the right equivariant and left equivariant categories are canonically equivalent, as explained in~\S\ref{ss:LE}.)
Recall the indecomposable objects $\oB_w \in \oDiagBS(\fh,W)$ and $\uB_w^\wedge \in \uDiagBS(\fh^*,W)$ defined in~\S\ref{ss:EW-category} (for $w \in W$), and the indecomposable objects $\oT_w \in \Tilt_{\RE}(\fh,W)$ and $\uT_w^\wedge \in \Tilt_{\LE}(\fh^*,W)$ defined in~\S\S\ref{ss:tilt}--\ref{ss:LE} (for $w \in W$).


\begin{theorem}
\label{thm:self-duality}
There exists an equivalence of triangulated categories
\[
\kappa:\RE(\fh,W)\simto\LE(\fh^*,W) 
\]
which satisfies $\kappa\circ\langle1\rangle = (1)\circ\kappa$, and such that
\[
\kappa(\oDelta_w)\simeq\uDelta^\wedge_w, \quad
\kappa(\onabla_w)\simeq\unabla^\wedge_w, \quad
\kappa(\oB_w)\simeq \uT_w^\wedge, \quad
\kappa(\oT_w)\simeq \uB^\wedge_w
\]
for any $w\in W$.
\end{theorem}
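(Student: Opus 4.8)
The plan is to obtain $\kappa$ by composing equivalences already at our disposal. Recall the equivalence $\For^\LM_\RE\colon\LM(\fh,W)\simto\RE(\fh,W)$ of~\eqref{eqn:ForLMRE} and the equivalence $\varkappa\colon\LE(\fh^*,W)\simto\LM(\fh,W)$ of Theorem~\ref{thm:varkappa}, and set
\[
\kappa:=\bigl(\For^\LM_\RE\circ\varkappa\bigr)^{-1}\colon\RE(\fh,W)\simto\LE(\fh^*,W);
\]
this is an equivalence of triangulated categories. Since $\For^\LM_\RE$ is triangulated and commutes with $(1)$, $[1]$ and $\langle1\rangle$, and since $\varkappa\circ(1)=\langle1\rangle\circ\varkappa$, the composite $\For^\LM_\RE\circ\varkappa$ commutes with $[1]$ and carries $(1)$ to $\langle1\rangle$; taking inverses, $\kappa$ commutes with $[1]$ and carries $\langle1\rangle$ to $(1)$, so $\kappa\circ\langle1\rangle=(1)\circ\kappa$ (and hence also $\kappa\circ(1)=\langle1\rangle\circ\kappa$). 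For the (co)standard objects, the relation $\For^\LM_\RE\circ\For^\BE_\LM=\For^\BE_\RE$ (see~\S\ref{ss:LM}) together with~\eqref{eq:standard in LM} gives $\For^\LM_\RE(\dDelta_w)=\For^\BE_\RE(\Delta_w)=\oDelta_w$, so from $\varkappa(\uDelta^\wedge_w)\simeq\dDelta_w$ we obtain $\kappa(\oDelta_w)\simeq\uDelta^\wedge_w$; the argument for $\kappa(\onabla_w)\simeq\unabla^\wedge_w$ is identical.

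Next I would prove $\kappa(\oT_w)\simeq\uB^\wedge_w$. By Proposition~\ref{prop:TiltLM} the functor $\For^\LM_\RE$ restricts to an equivalence $\Tilt_\LM(\fh,W)\simto\Tilt_\RE(\fh,W)$; being t-exact and sending $\dDelta_v$ to $\oDelta_v$, it matches indecomposable tilting objects carrying the same labels, so $\For^\LM_\RE(\cT_w)\simeq\oT_w$ and hence $(\For^\LM_\RE)^{-1}(\oT_w)\simeq\cT_w$. On the other hand $\varkappa(\uB^\wedge_\uv)\simeq\cT_\uv$ for every expression $\uv$; taking $\uv$ reduced for $w$ and comparing Krull--Schmidt decompositions, the indecomposable summand $\uB^\wedge_w$ of $\uB^\wedge_\uw$, characterized via reduced expressions by~\cite[Theorem~6.26]{ew} and the discussion in~\S\ref{ss:EW-category}, is carried by the equivalence $\varkappa$ to the summand of $\cT_\uw$ characterized in the same way in Corollary~\ref{cor:Tilt LM indecomposable}, namely $\cT_w$. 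Thus $\varkappa^{-1}(\cT_w)\simeq\uB^\wedge_w$, and therefore $\kappa(\oT_w)=\varkappa^{-1}\bigl((\For^\LM_\RE)^{-1}(\oT_w)\bigr)\simeq\uB^\wedge_w$.

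The remaining and most delicate assertion, $\kappa(\oB_w)\simeq\uT^\wedge_w$, is the exchange of ``parity'' and ``tilting'' objects under Koszul duality, and I would establish it following the proof of~\cite[Theorem~5.7]{amrw2}. The point is that the Soergel objects $\oB_\uv$ of $\RE(\fh,W)$, hence also their indecomposable summands $\oB_w$, admit in $\RE(\fh,W)$ both a standard filtration (a finite sequence of distinguished triangles) whose subquotients are suitable shifts of the $\oDelta_v$, with $\oDelta_w$ occurring exactly once and every other index $<w$, and dually a costandard filtration with subquotients suitable shifts of the $\onabla_v$; this is the analogue in our setting of a result of~\cite[Chap.~10]{amrw1}, available through the results recalled in Section~\ref{sec:tilting} (ultimately~\cite{arv}). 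Because $\kappa$ exchanges $\oDelta_v$ with $\uDelta^\wedge_v$ and $\onabla_v$ with $\unabla^\wedge_v$, commutes with $[1]$, and carries $\langle1\rangle$ to $(1)$, applying $\kappa$ turns these filtrations into a standard and a costandard filtration of $\kappa(\oB_w)$ lying in the heart $\fP_\LE(\fh^*,W)$; hence $\kappa(\oB_w)$ is a tilting object of $\fP_\LE(\fh^*,W)$, indecomposable (as $\kappa$ is an equivalence), with $\uDelta^\wedge_w$ occurring with multiplicity one in its standard filtration and otherwise only $\uDelta^\wedge_v$ with $v<w$ --- and therefore $\kappa(\oB_w)\simeq\uT^\wedge_w$. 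The main obstacle --- the step I expect to require the most care --- is exactly the verification underlying this last implication: one must check that the interplay between the homological and internal grading shifts occurring in the (co)standard filtrations of $\oB_w$ and the ``degree-mixing'' behaviour of $\kappa$ is such that the resulting filtrations of $\kappa(\oB_w)$ genuinely lie in the perverse heart. This is the essential ``Koszul'' input, and porting the corresponding argument of~\cite{amrw1,amrw2} to the present generality of realizations is where the real work lies.
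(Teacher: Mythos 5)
Your proposal takes essentially the same route as the paper: $\kappa$ is defined as the inverse of $\For^\LM_\RE\circ\varkappa$, the behaviour on standard and costandard objects follows directly from Theorem~\ref{thm:varkappa}, the identity $\kappa(\oT_w)\simeq\uB^\wedge_w$ is extracted from the matching of indecomposables (which the paper treats as "immediate"; your Krull--Schmidt induction is the correct justification), and $\kappa(\oB_w)\simeq\uT^\wedge_w$ is deferred to the argument of~\cite[Theorem~5.7]{amrw2} — exactly as the paper does. Your sketch of that last step (parity of the $\oB_w$ with respect to $\Delta$- and $\nabla$-filtrations, $\kappa$ sending $\oDelta_v(n)$ to $\uDelta^\wedge_v\langle n\rangle$ and hence landing the filtrations in the perverse heart) is the right mechanism, and you correctly flag this as the non-trivial input.
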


\begin{proof}
We define $\kappa$ as the inverse of the composition of equivalences $\For^\LM_\RE\circ\varkappa$, see~\eqref{eqn:ForLMRE} and Theorem~\ref{thm:varkappa}. By Theorem~\ref{thm:varkappa} this functor satisfies $\kappa(\oDelta_w)\simeq\uDelta^\wedge_w$ and $\kappa(\onabla_w)\simeq\unabla^\wedge_w$ for any $w \in W$. The fact that $\kappa(\oT_w)\simeq \uB^\wedge_w$ for $w \in W$ is also an immediate consequence of the properties of $\varkappa$. Finally, the fact that $\kappa(\oT_w)\simeq \uB^\wedge_w$ for any $w \in W$ can be deduced exactly as in the proof of~\cite[Theorem~5.7]{amrw2}.
%
%
\end{proof}

\subsection{Application to the combinatorics of indecomposable tilting objects}
\label{ss:application}

There are two families of Laurent polynomials parametrized by pairs of elements of $W$ that one can attach to the realization $\fh$. First, consider the split Grothendieck ring $[\Diag(\fh,W)]_\oplus$ of the monoidal category $\Diag(\fh,W)$, which we consider as a $\Z[v,v^{-1}]$-algebra with $v$ acting via the morphism induced by $(1)$. Recall that, using the notation from the proof of Lemma~\ref{le:Hom of equal grk} (but allowing now $W$ not to be finite), there exists a unique $\Z[v,v^{-1}]$-algebra isomorphism
\[
 \eta : \cH_{(W,S)} \simto [\Diag(\fh,W)]_\oplus
\]
which sends $H_s+v$ to $[B_s]$ for any $s \in S$, see~\cite[Corollary~6.27]{ew}. For $w \in W$ we set
\[
\uH^{\fh}_w = \eta^{-1}([B_w]).
\]
Then $(\uH^{\fh}_w : w \in W)$ is a basis of $\cH_{(W,S)}$, which we call the ``canonical basis'' attached to $\fh$. We can obtain a first family of Laurent polynomials $(h^{\fh}_{y,w} : y,w \in W)$ as the coefficients of the expansion of the elements of this basis in the standard basis $(H_y : y \in W)$; namely for $w \in W$ we have
\[
 \uH^{\fh}_w = \sum_{y \in W} h^{\fh}_{y,w} \cdot H_y.
\]
(The ``$p$-canonical'' basis studied in~\cite{jw} is an example of this construction.) 

On the other hand, consider the objects $(\oT_w : w \in W)$ in $\fP_{\RE}(\fh,W)$. Given $y \in W$ and $n \in \Z$, we will denote by
\[
 (\oT_w : \oDelta_y \langle n \rangle)
\]
the number of times $\oDelta_y \langle n \rangle$ appears in a standard filtration of $\oT_w$. (It is a standard fact that this quantity does not depend on the choice of filtration.) We can then consider, for $y,w \in W$, the Laurent polynomial
\[
 t^{\fh}_{y,w} := \sum_{n \in \Z} (\oT_w : \oDelta_y \langle n \rangle) \cdot v^n.
\]

The following statement shows that these families are exchanged by passing from $\fh$ to $\fh^*$.

\begin{cor}
 For any $y,w \in W$ we have $h^{\fh}_{y,w}=t^{\fh^*}_{y,w}$.
\end{cor}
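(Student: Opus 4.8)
The statement to be proved is the equality $h^{\fh}_{y,w}=t^{\fh^*}_{y,w}$ for all $y,w\in W$. My plan is to compare both sides with the combinatorics of the equivalence $\kappa$ from Theorem~\ref{thm:self-duality}. Recall that $\kappa(\oT_w)\simeq\uB^\wedge_w$ and $\kappa(\oDelta_y)\simeq\uDelta^\wedge_y$, and that $\kappa\circ\langle1\rangle=(1)\circ\kappa$; the key point is that $\kappa$ is an equivalence of triangulated categories, so multiplicities in standard filtrations of perverse objects are preserved (once one knows that the relevant objects remain perverse, or more safely, once one reformulates the multiplicities in terms of $\Hom$-spaces that $\kappa$ manifestly preserves).

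\textbf{Step 1: Reinterpret $t^{\fh^*}_{y,w}$ via the Hecke algebra.} First I would recall that in a graded highest weight category the multiplicities $(\oT_w:\oDelta_y\langle n\rangle)$ are governed by the class $[\oT_w]$ in the (Grothendieck group of the) heart, expanded in the basis of standard objects; this is standard and is exactly how $t^{\fh}_{y,w}$ was defined. Via the identification of $[\DiagBS^\oplus(\fh,W)]_\oplus$ (equivalently, the Grothendieck group of $\fP_{\RE}$) with $\cH_{(W,S)}$, the class of $\oDelta_y$ corresponds to $H_y$ (up to the normalization fixed in~\cite{arv}), so that $\sum_y t^{\fh}_{y,w}\,H_y$ is precisely the class of $\oT_w$ in $\cH_{(W,S)}$. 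Thus it suffices to prove that the class of $\oT_w^{\fh^*}$ in $\cH_{(W,S)}$ equals $\uH^{\fh}_w$, i.e. the class of $\oB_w$ for the realization $\fh$.

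\textbf{Step 2: Transport the class of $\oT_w^{\fh^*}$ through $\kappa$.} Here I would use $\kappa$ in the form given by Theorem~\ref{thm:self-duality} — but with the roles of $\fh$ and $\fh^*$ interchanged, which is legitimate since $(\fh^*)^*=\fh$ and all assumptions are self-dual by~\S\ref{ss:dual-real}. Applying that version of $\kappa$ (call it $\kappa^\vee:\RE(\fh^*,W)\simto\LE(\fh,W)$) gives $\kappa^\vee(\oT_w^{\fh^*})\simeq\uB_w^{\fh}$ and $\kappa^\vee(\oDelta_y^{\fh^*})\simeq\uDelta_y^{\fh}$, with $\kappa^\vee\circ\langle1\rangle=(1)\circ\kappa^\vee$. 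Since $\kappa^\vee$ is a triangulated equivalence intertwining the grading shifts in this way, and since the standard and costandard objects on both sides lie in the respective perverse hearts, $\kappa^\vee$ restricts to an exact equivalence of the hearts $\fP_{\RE}(\fh^*,W)\simeq\fP_{\LE}(\fh,W)$ (here I would invoke that $\kappa^\vee$ sends standards to standards and costandards to costandards, hence is t-exact, as in the argument at the end of the proof of Theorem~\ref{thm:varkappa}). Consequently $\kappa^\vee$ induces an isomorphism of Grothendieck groups sending $[\oT_w^{\fh^*}]$ to $[\uB_w^{\fh}]$ and $[\oDelta_y^{\fh^*}]\langle n\rangle$ to $[\uDelta_y^{\fh}](n)$. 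Reading off multiplicities: $(\oT_w^{\fh^*}:\oDelta_y^{\fh^*}\langle n\rangle)=(\uB_w^{\fh}:\uDelta_y^{\fh}(n))$, whence $t^{\fh^*}_{y,w}=\sum_n(\uB_w^{\fh}:\uDelta_y^{\fh}(n))\,v^n$, which in $\cH_{(W,S)}$ is the coefficient of $H_y$ in $[\uB_w^{\fh}]=[\oB_w^{\fh}]$ (the equivalence between $\uDiag$ and $\oDiag$ of Remark~\ref{rmk:DBS}\eqref{it:rmk-symmetry-DBS}, or the analogous statement in~\cite{arv}, identifies the relevant classes). But by definition $[\oB_w^{\fh}]=\eta(\uH^{\fh}_w)$, so the coefficient of $H_y$ is exactly $h^{\fh}_{y,w}$, giving the claim.

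\textbf{Step 3: Cleaning up normalizations and the case $W$ infinite.} The main obstacle I anticipate is bookkeeping: making sure the normalizations of standard objects in $\fP_{\RE}$, of the isomorphism $\eta$, and of the grading shift conventions $(1)$ vs. $\langle1\rangle$ on the two sides of $\kappa$ all line up, so that the shift by $n$ in $(\oT_w:\oDelta_y\langle n\rangle)$ matches the shift by $n$ in the expansion $\uH^{\fh}_w=\sum_y h^{\fh}_{y,w}H_y$ under the identification $\kappa\circ\langle1\rangle=(1)\circ\kappa$. This is where I would be most careful. A secondary point is that the Grothendieck-group arguments must be phrased so as not to require $W$ finite (the filtration multiplicities are still well-defined because, in the graded highest weight setup of~\cite[Appendix]{ar} as used in~\cite{arv}, each $\oT_w$ has a finite standard filtration), and so the corollary holds for arbitrary Coxeter systems, consistently with the running hypotheses of Section~\ref{sec:functor Phi}. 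Once these normalizations are pinned down, the equality $h^{\fh}_{y,w}=t^{\fh^*}_{y,w}$ follows, and — by the symmetry $\fh\leftrightarrow\fh^*$ — one also recovers $h^{\fh^*}_{y,w}=t^{\fh}_{y,w}$, closing the loop.
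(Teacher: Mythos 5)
Your proposal is correct and follows essentially the same route as the paper's proof: both rest on Theorem~\ref{thm:self-duality} together with the Grothendieck-group identification of $\cH_{(W,S)}$ with the equivariant category via standard objects. The only (cosmetic) difference is directional — you apply the self-duality with $\fh$ and $\fh^*$ swapped (so $\kappa^\vee:\RE(\fh^*,W)\simto\LE(\fh,W)$ sends $\oT_w^{\fh^*}$ to $\uB_w^{\fh}$ and you track classes through $\kappa^\vee$), whereas the paper writes $h^{\fh}_{y,w}$ directly as an Euler characteristic of $\Hom$'s from $\oB_w$ to shifted costandards and then applies the $\Hom$-isomorphism from $\kappa:\RE(\fh,W)\simto\LE(\fh^*,W)$; the latter avoids any explicit appeal to $\kappa$ being t-exact, relying instead just on the vanishing $\Hom(\uT^\wedge_w,\unabla^\wedge_y\langle m\rangle[n])=0$ for $n\neq0$, which is a general fact for tilting versus costandard objects.
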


\begin{proof}
 As explained in~\cite[\S 6.6]{arv}, the natural functor $\DiagBS^\oplus(\fh,W) \to \BE(\fh,W)$ induces an isomorphism
 \[
  [\DiagBS^\oplus(\fh,W)]_\oplus \simto [\BE(\fh,W)]_\Delta
 \]
where the right-hand side is the Grothendieck group of the triangulated category $\BE(\fh,W)$, and the composition of $\eta$ with this isomorphism sends $H_w$ to $[\Delta_w]$ for any $w \in W$. The functor $\For^{\BE}_{\RE}$ also induces an isomorphism
\[
 [\BE(\fh,W)]_\Delta \simto [\RE(\fh,W)]_\Delta,
\]
e.g.~because it is t-exact and induces an isomorphism between the sets of isomorphism classes of simple objects in the heart of the perverse t-structure on both sides. We deduce an isomorphism
\[
 \cH_{(W,S)} \simto [\RE(\fh,W)]_\Delta
\]
sending $H_w$ to $[\oDelta_w]$ for any $w \in W$. In view of the description of morphism spaces between standard and costandard objects (see~\cite[(9.2)]{arv}), the inverse isomorphism sends the class of an object $M$ to
\[
 \sum_{w \in W} \sum_{n,m \in \Z} (-1)^n \dim_\bk \Hom_{\RE(\fh,W)}(M,\onabla_w ( m ) [n]) \cdot v^m H_w.
\]
In particular, for $y,w \in W$ we have
\[
 h^{\fh}_{y,w}(v) = \sum_{n,m \in \Z} (-1)^n \dim_\bk \Hom_{\RE(\fh,W)}(\oB_w,\onabla_y ( m ) [n]) \cdot v^m.
\]

On the other hand, by Theorem~\ref{thm:self-duality}, for any $y,w \in W$ and $n,m \in \Z$ we have
\[
 \Hom_{\RE(\fh,W)}(\oB_w,\onabla_y ( m ) [n]) \cong \Hom_{\LE(\fh^*,W)}(\uT^\wedge_w,\unabla^\wedge_y \langle m \rangle [n]).
\]
Hence this $\bk$-vector space vanishes unless $n=0$, and in this case its dimension is $(\uT^\wedge_w,\uDelta_y \langle m \rangle)$. We deduce that
\[
 h_{y,w}^\fh(v) = \sum_{m \in \Z} (\uT^\wedge_w,\uDelta_y^\wedge \langle m \rangle) \cdot v^m,
\]
which finishes the proof.
\end{proof}

\begin{example}
\label{ex:t-poly-Soergel}
 Consider the realization of Example~\ref{ex:realizations}\eqref{it:Soergel-real}. In this case, the main result of~\cite{ew-hodge} says that for any $y,w \in W$ the polynomial $h^{\fh}_{y,w}$ is up to some factor the Kazhdan--Lusztig polynomial~\cite{kl} attached to $(y,w)$; in the notation of~\cite{soergel} we have $h^{\fh}_{y,w}=h_{y,w}$. 
 Since the dual of a Soergel realization is again a Soergel realization (see Example~\ref{ex:dual-real}\eqref{it:dual-soergel-real}),
 we deduce that for any Soergel realization we have $t^{\fh}_{y,w}=h_{y,w}$.
\end{example}

\begin{rmk}
\label{rmk:koszul-duality}
Continue with the setting of Example~\ref{ex:t-poly-Soergel}. By~\cite[Proposition~8.13]{arv}, for any $w \in W$ we have $\oB_w = \overline{\rL}_w$. As a consequence of Theorem~\ref{thm:self-duality} we therefore have
\begin{equation}
\label{eqn:Ext-simples}
\Hom_{\RE(\fh,W)}(\overline{\rL}_w,\overline{\rL}_w \langle n \rangle [m])=0
\end{equation}
unless $n=-m$. Assume now that $W$ is finite. Then the category $\fP_{\RE}(\fh,W)$ has enough projective objects; if for $w \in W$ we denote by $\overline{\mathscr{P}}_w$ the projective cover of $\overline{\rL}_w$, then setting
\[
\overline{\mathscr{P}} := \bigoplus_{w \in W} \overline{\mathscr{P}}_w, \quad A := \bigoplus_{n \in \Z} \Hom(\overline{\mathscr{P}}, \overline{\mathscr{P}} \langle n \rangle)
\]
we obtain a graded $\bk$-algebra $A$ and an equivalence of categories between $\fP_{\RE}(\fh,W)$ and the category of finite-dimensional graded $A$-modules. Using~\eqref{eqn:Ext-simples} and the techniques of~\cite[\S 9.2]{riche} one can prove that $A$ is a Koszul ring in the sense of~\cite{bgs}. In case $W$ is a finite dihedral group and $\fh$ is the geometric realization, one sees using Theorem~\ref{thm:self-duality} and Ringel duality (see~\cite[Proposition~10.2]{arv}) that $A$ is isomorphic to the graded ring studied in~\cite{sauerwein}.
\end{rmk}


\end{document}